\newtheorem{theorem}{Theorem}[section]
\newtheorem{lemma}[theorem]{Lemma}
\newtheorem{claim}[theorem]{Claim}
\theoremstyle{definition}
\newtheorem{question}[theorem]{Question}
\theoremstyle{remark}
\newcommand{\mc}[1]{\mathcal{#1}}
\DeclareMathOperator{\direction}{direction}
\DeclareMathOperator{\target}{target}
\DeclareMathOperator{\LEFT}{left}
\DeclareMathOperator{\RIGHT}{right}
\DeclareMathOperator{\extent}{scope}
\begin{document}

\title{There is no classification of the decidably presentable structures}

\author{Matthew Harrison-Trainor}
\address{Group in Logic and the Methodology of Science\\
University of California, Berkeley\\
 USA}
\email{matthew.h-t@berkeley.edu}
\urladdr{\href{http://www.math.berkeley.edu/~mattht/index.html}{www.math.berkeley.edu/$\sim$mattht}}

\begin{abstract}
A computable structure $\mc{A}$ is decidable if, given a formula $\varphi(\bar{x})$ of elementary first-order logic, and a tuple $\bar{a} \in \mc{A}$, we have a decision procedure to decide whether $\varphi$ holds of $\bar{a}$. We show that there is no reasonable classification of the decidably presentable structures. Formally, we show that the index set of the computable structures with decidable presentations is $\Sigma^1_1$-complete. This result holds even if we restrict out attention to groups, graphs, or fields. We also show that the index sets of the computable structures with $n$-decidable presentations is $\Sigma^1_1$-complete for any $n$.
\end{abstract}

\maketitle

\section{Introduction}

In effective mathematics, we are concerned with \textit{computable structures}. A mathematical structure---a set together with operations and relations on that set---is \textit{computable} if the set and the operations and relations on it are all computable. For example, a computable field is one where the domain is a computable set and the operations of addition and multiplication are computable. In a computable structure, we can effectively answer quantifier-free questions, such as, for elements $a$, $b$, and $c$ of a field, whether $a + b = b \cdot c$.

There are many other questions about a structure that we might want to answer in a computable way. For example, in a field, we might want to be able to decide whether a given polynomial has a root. In general, this is undecidable, but sometimes, such as for algebraically closed fields, this can be done. In fact, given a computable algebraically closed field, as a result of quantifier elimination we can decide the answer to any question that can be formulated in elementary first-order logic, i.e., as a logical formula using $\vee$, $\wedge$, $\neg$, $\rightarrow$, $\forall$, and $\exists$. In general, we say that a computable structure is \textit{decidable} if there is a method to compute, given elements $a_1,\ldots,a_n$ and a formula $\varphi$ of elementary first-order logic, whether $\varphi$ holds of $a_1,\ldots,a_n$. Every computable algebraically closed field is decidable.

An important phenomenon in computability theory is that there can be computable structures which are isomorphic, but not computably isomorphic, so that we cannot transfer computational properties from on to the other. For example, the standard presentation of the linear order $(\mathbb{N},<)$ is decidable. However, there is also a computable copy of the same structure in which the successor relation is not computable, and hence this copy is not decidable. (Here, $a$ is a succesor of $b$ if and only if $(\forall c)[ c < b \vee c > a]$.) Though these two computable structures are isomorphic, they are not computably isomorphic.

This paper is about the problem of characterizing those computable structures which have a decidable presentation. This problem was probably first stated by Goncharov, and has more recently been posed for example by Bazhenov at the 2015 Mal'cev Meeting and Fokina at the 2016 ASL meeting in Storrs, CT. We will show that there is no such characterization.

More formally, our main theorem is as follows. Fix an effective list of the diagrams of the (partial) computable structures.
\begin{theorem}\label{thm:d-pres}
The index set
\[ I_{\text{d-pres}} = \{i \mid \text{the $i$th computable structure is decidably presentable}\}\]
is $\Sigma^1_1$-complete.
\end{theorem}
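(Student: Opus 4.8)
The plan is to establish the two halves of $\Sigma^1_1$-completeness separately: first that $I_{\text{d-pres}}$ is $\Sigma^1_1$, and then that it is $\Sigma^1_1$-hard, via a computable reduction from the set of computable trees $T \subseteq \omega^{<\omega}$ that have an infinite path. The latter set is $\Sigma^1_1$-complete (equivalently, the well-founded computable trees are $\Pi^1_1$-complete), so a computable map $T \mapsto \mathcal{A}_T$ with ``$\mathcal{A}_T$ decidably presentable $\iff$ $T$ ill-founded'' suffices.

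For the upper bound I would unwind the definition. Writing $\mathcal{A}_i$ for the $i$th computable structure, $i \in I_{\text{d-pres}}$ holds if and only if there is a decidable structure $\mathcal{B} \cong \mathcal{A}_i$. Coding $\mathcal{B}$ by an index $e$ for its elementary diagram, this becomes: there is a number $e$ such that $\Phi_e$ is total and equals the elementary diagram of a structure with domain $\omega$ (a complete, consistent sentence set with the witness property, which is an arithmetic condition on $e$), and there is a function $f$ that is an isomorphism from $\mathcal{A}_i$ onto that structure. The matrix is arithmetic in $e$, $f$, $i$ (totality is $\Pi^0_2$, correctness of the diagram is arithmetic, and ``$f$ is an isomorphism'' is $\Pi^0_1$ relative to the two diagrams), so prefixing the function quantifier $\exists f$ and the number quantifier $\exists e$ yields a $\Sigma^1_1$ predicate.

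The substance is the hardness argument. I would build, uniformly and computably in an index for $T$, a computable structure $\mathcal{A}_T$ assembled from ``gadgets'' attached to the nodes of $T$, designed so that both the first-order theory of $\mathcal{A}_T$ and the problem of effectively realizing the types it realizes become tractable exactly when a coherent infinite branch is available to guide the decisions. The two directions to verify are: (a) if $T$ has an infinite path then $\mathcal{A}_T$ has a decidable copy; and (b) if $T$ is well-founded then no copy of $\mathcal{A}_T$ has a computable elementary diagram, i.e.\ $\mathcal{A}_T$ is not decidably presentable.

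The main obstacle, and the feature that forces genuinely $\Sigma^1_1$ (rather than arithmetic) behavior, is direction (a): a path $P$ through $T$ may be arbitrarily non-computable, so one cannot naively ``follow $P$'' to compute a decidable diagram, as that would only produce a $P$-decidable copy. The way around this is to exploit that decidable presentability is an \emph{existence} statement. I would arrange the gadgets so that the mere existence of \emph{some} path pins down the isomorphism type of $\mathcal{A}_T$ to a fixed, homogeneous target $\mathcal{C}$ (concretely, the countable saturated model of a decidable theory whose type set I make effectively enumerable), which therefore carries a decidable presentation prepared in advance; the path then serves only as a non-uniform certificate that $\mathcal{A}_T \cong \mathcal{C}$, and no computation from $P$ is needed. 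For direction (b), the work is to show that a computable elementary diagram of a copy of $\mathcal{A}_T$ would answer the relevant $\exists/\forall$ questions about the gadgets effectively, letting one extract an infinite branch through $T$ (or an unbounded rank assignment), contradicting well-foundedness. Composing this reduction with a standard effective coding of structures into graphs, groups, or fields that preserves decidable presentability then transfers the completeness result to those classes.
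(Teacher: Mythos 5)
Your upper bound is fine, and your instinct for direction (a) matches the paper's first ingredient: Lemma \ref{lem:seq} produces a decidable structure $\mathcal{C}_\infty$ (built from the Harrison order) and uniformly decidable structures $\mathcal{C}_n$ with $\mathcal{C}_n \cong \mathcal{C}_\infty$ if and only if $n \in S$, so ill-foundedness does serve exactly as a non-uniform certificate of isomorphism to a fixed decidable structure. The gap is direction (b), which is where essentially all of the work lies. Making the isomorphism type of $\mathcal{A}_T$ encode $T$ cannot by itself make the well-founded side bad: the paper's first (unlabeled) lemma shows that $\omega^\omega \cdot (1+\mathcal{L})$ has a decidable copy \emph{uniformly in any computable linear order} $\mathcal{L}$, so the natural Harrison-style encoding yields structures that are decidably presentable for every $T$, well-founded or not. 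Failing to be isomorphic to your prepared target $\mathcal{C}$ says nothing about failing to have some other decidable presentation; ruling out \emph{all} decidable presentations is a diagonalization requirement, not an encoding requirement.

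Moreover, your proposed mechanism for (b) --- extracting an infinite branch from a hypothetical computable elementary diagram --- cannot work as described. Such an extraction proceeds by isomorphism-invariant first-order facts about $\mathcal{A}_T$ (which node-gadget has a child satisfying which formula); if those facts guaranteed a genuine next child at every step, then by dependent choice $T$ would have a path outright, with no decidability hypothesis used, contradicting well-foundedness --- so for well-founded $T$ those facts cannot all be true and the extraction provably stalls rather than producing a contradiction. If instead the extraction were sound at the structural level (``any decidable copy of $\mathcal{A}_T$ computes a path through $T$''), it would contradict your own direction (a): $\Sigma^1_1$-hardness forces the reduction to handle ill-founded trees all of whose paths are non-hyperarithmetic, yet (a) grants exactly those $\mathcal{A}_T$ copies with computable elementary diagrams, from which no path could be computed. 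What the paper does instead is build $\mathcal{M}_n$ as a disjoint union of components $\mathcal{A}_i$, where $\mathcal{A}_i$ is constructed by \emph{actively watching the $i$th candidate decidable structure} $\mathcal{D}_i$ and diagonalizing against it: a distinguished node $\rho$ tagged with $\mathcal{C}_n$ carries a linear order built to be $\omega$, $\omega^*$, or $\omega^*+\omega$ so as to disagree with the order attached to $\rho$'s apparent image in $\mathcal{D}_i$. When $n \notin S$ the tag anchors $\rho$ and the diagonalization defeats every $\mathcal{D}_i$ (Lemma \ref{lem:succ-dia2}); when $n \in S$ the tags collapse, the structure becomes homogeneous, and a decidable copy is built uniformly by omitting $\rho$ (Lemma \ref{lem-cons-unif2}) --- this is where the path enters only non-uniformly, as you anticipated. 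Even this requires machinery your sketch does not anticipate: the labels used to force $\mathcal{D}_i$ to reveal itself must be invisible to elementary first-order logic yet recoverable from any copy, which for full decidability forces them to be $\Sigma^0_2$ over the elementary diagram (realization of a non-principal type, via a Marker extension), and the construction must then \emph{guess} at the opponent's $\Sigma^0_2$ labels with a true-stage argument.
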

\noindent This theorem is proved in Section \ref{sec:d}.

As a result, there is no possible reasonable characterization of the computable structures with decidable presentations. What we mean is that there is no simpler way to check whether a computable structure $\mc{A}$ has a decidable presentation than to ask: \textit{Does there exists a decidable structure $\mc{B}$ and a classical isomorphism between $\mc{A}$ and $\mc{B}$}? This requires searching through all possible isomorphisms, of which there may be continuum-many, between $\mc{A}$ and $\mc{B}$. (Contrast this with a very naive, and incorrect, candidate for a characterization: A computable structure $\mc{A}$ has a decidable copy if and only if there is a computable listing of the types it realizes. In this case, we must look through the countably many possible computable listings of types, and check whether they list the types in $\mc{A}$. This requires only quantifiers over natural numbers, and objects which can be coded by natural numbers.\footnote{For some restricted classes of structures, such a characterization might be possible. For example, Andrews \cite{Andrews} showed that if $\mc{M}$ is a model of a decidable $\omega$-stable theory with countably many countable models, then $\mc{M}$ has a decidable copy if and only if all of the types realized in $\mc{M}$ are recursive.}) If there were a simpler characterization of the computable structures with decidable presentations, then one would expect that characterization to yield a simpler way of checking whether a computable structure has a decidable presentation.

A similar approach was taken in \cite{DKLLMT}, where it was shown that there is no reasonable characterization of computable categoricity, and in \cite{DowneyMontalban08}, where it was shown that there is no reasonable classification of abelian groups. This approach originated with \cite{GoncharovKnight}. See also \cite{LemppSlaman,Fokina07,CFGKKMP,FGKKT,GBM,GoncharovBazhenovMarchuk15}.

\subsection{Decidable presentability in familiar classes}

What if we are interested in a specific class of structures, such as fields, graphs, or groups? In the case of some particular classes of structures---which are \textit{universal} in a sense soon to be described---it follows immediately from  Theorem \ref{thm:d-pres} that there is no classification of the decidably presentable structures in that class.

Hirschfeldt, Khoussainov, Shore, and Slinko \cite{HKSS} showed that many classes of structures---such as graphs and groups---are \textit{universal}. What we mean when we say that a class $\mc{C}$ of structures is universal is that for every structure $\mc{A}$, there is a structure $\mc{B} \in \mc{C}$ such that $\mc{A}$ and $\mc{B}$ are \textit{effectively bi-interpretable}. That is, each is interpretable in the other using computable infinitary $\Sigma_1$ formulas, in a compatible way, and so for most computability-theoretic purposes, the two structures are the same. Equivalently---see \cite{HTM3}---there is a computable bi-transformation between copies of $\mc{A}$ and copies of $\mc{B}$, i.e., there is a computable way to turn presentations of $\mc{A}$ into presentations of $\mc{B}$, and vice versa, in a functorial way. We note that this exact definition of a universal class did not appear in \cite{HKSS}, but comes from later work in \cite[Section 3]{MPPSS} and \cite[Definition 5.4]{MonICM}. (For groups, we need to add finitely many constants to the language.)

It turns out, when we examine the proofs from \cite{HKSS}, that the effective bi-interpretations between a structure $\mc{A}$ and the corresponding graph (or group) $\mc{G}_{\mc{A}}$, use only elementary first-order formulas. (An effective bi-interpretation is, in general, allowed to use infinitary formulas.) That means that a structure $\mc{A}$ is decidable if and only if the corresponding graph (or group) $\mc{G}_{\mc{A}}$ is decidable.

Miller, Park, Poonen, Schoutens, and Shlapentokh \cite{MPPSS} recently showed that the class of fields is also universal. Again, the bi-interpretations between a structure and the corresponding field use only elementary first-order formulas.

It follows that one cannot characterize which graphs, groups, and fields are decidably presentable.

\begin{theorem}
The index sets of the decidably presentable graphs, groups, and fields are $\Sigma^1_1$-complete.
\end{theorem}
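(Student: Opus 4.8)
The plan is to combine Theorem \ref{thm:d-pres} with the universality results discussed above, via a uniform many-one reduction on indices. For each class $\mc{C} \in \{\text{graphs}, \text{groups}, \text{fields}\}$, write $I_{\mc{C}} = \{i \mid \text{the $i$th computable member of } \mc{C} \text{ is decidably presentable}\}$. I would establish two things: that $I_{\mc{C}}$ is $\Sigma^1_1$, and that it is $\Sigma^1_1$-hard.

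For the upper bound, being decidably presentable is $\Sigma^1_1$ for the same reason as in Theorem \ref{thm:d-pres}: the $i$th structure is decidably presentable if and only if there exists a function coding a decidable structure $\mc{B}$ together with an isomorphism onto it, which is an existential quantifier over reals applied to an arithmetic matrix. Restricting to members of $\mc{C}$ only conjoins an arithmetic side condition, so $I_{\mc{C}}$ remains $\Sigma^1_1$.

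For hardness, I would reduce $I_{\text{d-pres}}$ to $I_{\mc{C}}$. By \cite{HKSS} (for graphs and groups) and \cite{MPPSS} (for fields) there is a computable bi-transformation $\mc{A} \mapsto \mc{G}_{\mc{A}}$ sending each computable structure to a computable member of $\mc{C}$, so uniformly in an index $i$ for $\mc{A}_i$ we can compute an index $f(i)$ for $\mc{G}_{\mc{A}_i}$. The crucial point, emphasized above, is that these bi-interpretations use only elementary first-order formulas. I claim $\mc{A}$ is decidably presentable if and only if $\mc{G}_{\mc{A}}$ is. In the forward direction, given a decidable copy $\mc{A}'$ of $\mc{A}$, functoriality of the transformation gives $\mc{G}_{\mc{A}'} \cong \mc{G}_{\mc{A}}$; since $\mc{G}_{\mc{A}}$ is interpreted in $\mc{A}$ by first-order formulas, every first-order question about $\mc{G}_{\mc{A}'}$ translates into a first-order question about the decidable structure $\mc{A}'$, so $\mc{G}_{\mc{A}'}$ is a decidable copy of $\mc{G}_{\mc{A}}$. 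The reverse direction is symmetric, using the first-order interpretation of $\mc{A}$ in $\mc{G}_{\mc{A}}$ to convert a decidable copy of $\mc{G}_{\mc{A}}$ into a decidable copy of $\mc{A}$. Hence $i \in I_{\text{d-pres}}$ if and only if $f(i) \in I_{\mc{C}}$, and $f$ is a computable reduction.

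The main obstacle --- indeed the only nontrivial content beyond Theorem \ref{thm:d-pres} --- is verifying that the ``decidable if and only if decidable'' transfer for single presentations promotes to a ``decidably presentable if and only if decidably presentable'' transfer. This rests on two observations: that the transformation is functorial, so it carries isomorphic copies to isomorphic copies (and thus a decidable copy on one side yields a copy of the correct isomorphism type on the other), and that a first-order interpretation converts decidability of the interpreting structure into decidability of the interpreted structure. Both are immediate once one checks, as claimed in the discussion above, that the formulas witnessing the interpretations in \cite{HKSS} and \cite{MPPSS} are genuinely finitary first-order rather than computable infinitary. With the equivalence in hand, $\Sigma^1_1$-hardness of $I_{\mc{C}}$ follows from that of $I_{\text{d-pres}}$, completing the proof.
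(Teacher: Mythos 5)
Your proposal is correct and follows essentially the same route as the paper: the paper also deduces the result by composing the $\Sigma^1_1$-hardness reduction of Theorem \ref{thm:d-pres} with the universality transformations of \cite{HKSS} and \cite{MPPSS}, using exactly the observation that those bi-interpretations are elementary first-order (hence decidability transfers between $\mc{A}$ and $\mc{G}_{\mc{A}}$, and functoriality carries this to presentability). The only detail the paper adds that you omit is that for groups one must add finitely many constants to the language, which does not affect the argument.
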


Other familiar classes of structures, such as linear orders and boolean algebras, are not universal, and so a similar argument does not work. It is possible that such classes admit a characterization of the decidably presentable structures in that class. For linear orders in particular, this question has already been raised:

\begin{question}[Moses, see \cite{Cholak00}]\label{ques:moses}
Can one characterize the linear orderings which have a decidable copy?
\end{question}

\noindent We believe that the Friedman-Stanley \cite{FriedmanStnaley89} transformation $T$ of structures into linear orders preserves decidability, in the sense that $\mc{A}$ is decidable if and only if $T(\mc{A})$ is decidable. It would follow that the answer to this question is ``no''.

Abelian groups are another class of structures which is not universal. However, it is still an open question whether or not abelian groups are Borel complete.

\begin{question}\label{ques:abel}
Can one characterize the torsion-free abelian groups which have a decidable copy?
\end{question}

\subsection{\texorpdfstring{$n$-decidable structures}{n-decidable structures}}

One can also ask whether a computable structure has an $n$-decidable copy. An \textit{$n$-decidable structure} is a structure in which we can decide whether a formula $\varphi$, with $n$ alternations of quantifiers, holds of a tuple $a_1,\ldots,a_\ell$. For each $n$, there are $n$-decidable structures which are have no $n+1$-decidable copies, and there is a structure which has $n$-decidable copies for all $n$, but no decidable copy \cite{ChisholmMoses}. We say that a structure is \textit{$n$-presentable} is it has an $n$-decidable copy. There is no simpler characterization of the $n$-presentable structures.

\begin{theorem}\label{thm:n-pres}
For each $n \in \omega$, the index set
\[ I_{\text{$n$-pres}} = \{i \mid \text{the $i$th computable structure is $n$-presentable}\}\]
is $\Sigma^1_1$-complete.
\end{theorem}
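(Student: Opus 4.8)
The plan is to establish the two halves separately: that $I_{n\text{-pres}}$ is $\Sigma^1_1$, and that it is $\Sigma^1_1$-hard. (I take $n \geq 1$; for $n = 0$, ``$0$-decidable'' coincides with having a computable copy, and the statement should be read accordingly.)

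For the upper bound, observe that $\mc{A}$ is $n$-presentable exactly when there is a bijection $g : \omega \to \omega$ and an index $e$ such that, writing $\mc{B}$ for the pushforward $g_\ast \mc{A}$ (a structure with domain $\omega$ whose atomic diagram is computable from $g$ and the diagram of $\mc{A}$), the partial function $\Phi_e$ is total and decides the $n$-diagram of $\mc{B}$, i.e. correctly computes the truth in $\mc{B}$ of every formula with at most $n$ quantifier alternations and parameters from $\omega$. Quantifying over $g$ is a single function quantifier; once $g$ and $e$ are fixed, totality of $\Phi_e$ is $\Pi^0_2$ and correctness of $\Phi_e$ against the $n$-diagram of $\mc{B}$ is arithmetic in $g$ (each instance is a $\Sigma^0_n$ or $\Pi^0_n$ fact about the computable diagram of $\mc{A}$ transported by $g$). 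Hence the whole predicate has the form $\exists g\,(\text{arithmetic in } g)$, which is $\Sigma^1_1$.

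For hardness, I would reduce from $I_{\text{d-pres}}$, which is $\Sigma^1_1$-complete by Theorem~\ref{thm:d-pres}. The goal is a computable operation $\mc{A} \mapsto \mc{A}^{[n]}$ on (indices of) computable structures such that $\mc{A}$ is decidably presentable if and only if $\mc{A}^{[n]}$ is $n$-presentable; composing the reduction witnessing $\Sigma^1_1$-hardness of $I_{\text{d-pres}}$ with $\mc{A} \mapsto \mc{A}^{[n]}$ then witnesses $\Sigma^1_1$-hardness of $I_{n\text{-pres}}$. To build $\mc{A}^{[n]}$ I would use a Marker-style extension. First expand $\mc{A}$ by adjoining witnessing elements (Skolem-type data), so that the full elementary diagram of $\mc{A}$ becomes encoded at the atomic level of the expansion; at this level a copy with computable atomic diagram corresponds exactly to a decidable copy of $\mc{A}$. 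On top of this I would insert $n$ controlled ``padding'' layers, in the manner of the constructions of Chisholm and Moses \cite{ChisholmMoses}, designed so that in any copy the $\Sigma_n$-diagram is computable precisely when the encoded atomic information is, while computing the $\Sigma_{n+1}$-diagram would require strictly more. The effect is to relocate the single genuine obstruction in $\mc{A}$ to exactly the $n$-th quantifier level of $\mc{A}^{[n]}$, so that $\mc{A}^{[n]}$ has an $n$-decidable copy iff $\mc{A}$ has a decidable copy.

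I expect the main obstacle to be the verification that this correspondence holds across \emph{all} copies, not merely the natural one. In the forward direction, a decidable copy of $\mc{A}$ must be shown to expand, uniformly, to an $n$-decidable (and not merely computable) copy of $\mc{A}^{[n]}$, with the padding layers supplying enough decidable structure to handle the first $n$ quantifier alternations. The reverse direction is harder: I must rule out spurious $n$-decidable copies of $\mc{A}^{[n]}$ that do not arise from any decidable copy of $\mc{A}$. This amounts to showing that the padding genuinely forces the level-$n$ threshold to be tight, so that an algorithm deciding the $\Sigma_n$-diagram of an arbitrary copy can be converted, effectively and independently of the copy, into a decision procedure for the full elementary diagram of a copy of $\mc{A}$. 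Pinning the threshold to exactly $n$ (rather than $n \pm 1$), and checking that the whole operation is uniformly computable on indices so that it composes with the reduction from Theorem~\ref{thm:d-pres}, is where the bookkeeping of the Marker and Chisholm--Moses layers must be carried out carefully.
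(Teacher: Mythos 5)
Your $\Sigma^1_1$ upper bound is fine (the paper treats it as routine). The gap is in the hardness half, and it is not mere bookkeeping: the transformation $\mc{A} \mapsto \mc{A}^{[n]}$ you describe cannot exist as designed. For the composition with the reduction of Theorem~\ref{thm:d-pres} to be a reduction between index sets, $\mc{A}^{[n]}$ must be a computable structure uniformly in an index for $\mc{A}$, in particular in the cases where $\mc{A}$ is \emph{not} decidably presentable. But your mechanism for the reverse direction is that the Skolem/Morley predicates of the first-sort copy of $\mc{A}$ are recoverable at quantifier depth $\leq n$ in every copy of $\mc{A}^{[n]}$; apply this to the computable copy of $\mc{A}^{[n]}$ that the reduction itself outputs. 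The $n$-diagram of a computable structure is computable in $0^{(n)}$, so the elementary diagram of a computable copy of $\mc{A}$ --- and in particular $\operatorname{Th}(\mc{A})$ --- would be computable in $0^{(n)}$. This is false for computable structures in general (e.g.\ the standard model of arithmetic), and nothing in your construction restricts to structures with arithmetically bounded theories. The underlying point is that the obstruction separating computable from decidable presentations is not ``a single obstruction'' that can be relocated to level $n$: deciding depth-$k$ formulas in a computable structure can be $0^{(k)}$-complete, one jump per alternation, unboundedly in $k$. A fixed number of Marker/Chisholm--Moses padding layers absorbs only a fixed number of jumps (compare Lemma~\ref{lem:mark-comp}, where one $\exists\forall$ layer absorbs exactly $\Sigma^0_2$ relations). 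So you face a dilemma: with $n$ layers of padding, $\mc{A}^{[n]}$ fails to be computable whenever the Morley predicates exceed that bound, and the reduction is not well defined; with padding depth growing with formula depth, $\mc{A}^{[n]}$ is computable but the $n$-diagram of a copy no longer determines the deep predicates, so an $n$-decidable copy of $\mc{A}^{[n]}$ no longer yields a decidable copy of $\mc{A}$ and the backward implication collapses. Either way the claimed equivalence fails.

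This is precisely why the paper does not reduce from $I_{\text{d-pres}}$ at the level of structures. Instead it proves hardness directly: given a $\Sigma^1_1$ set $S$, it builds uniformly computable structures $\mc{M}_j$ such that if $j \in S$ then $\mc{M}_j$ has a decidable copy, assembled outright from uniformly decidable pieces ($\mc{C}_j \cong \mc{C}_\infty$, the orders $\omega$, $\omega^*$, $\omega^*+\omega$, the disjoint-union lemmas, and labels kept $\Sigma^0_1$ or $\Sigma^0_2$ over the relevant diagram), while if $j \notin S$ then $\mc{M}_j$ is not isomorphic to any $1$-decidable structure (Section~\ref{sec:1}), respectively any $2$-decidable structure (Section~\ref{sec:d}), by diagonalization with targets, directions, and label renewal. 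Since a decidable copy is $n$-decidable for every $n$, and having no $2$-decidable copy rules out $n$-decidable copies for all $n \geq 2$, the one construction of Section~\ref{sec:d} yields $(\Sigma^1_1,\Pi^1_1) \leq_1 (I_{\text{d-pres}}, \overline{I_{2\text{-pres}}})$ and hence all cases $n \geq 2$, with $n=1$ handled by the simpler construction of Section~\ref{sec:1}. The positive side of that argument never has to compress an arbitrary decidable structure's elementary diagram down to a finite quantifier level --- which is exactly the step your proposal cannot carry out.
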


\noindent The proof of Theorem \ref{thm:n-pres} will be simpler than the proof of Theorem \ref{thm:d-pres}, and so we will begin by proving Theorem \ref{thm:n-pres} in Section \ref{sec:1}. To prove Theorem \ref{thm:d-pres}, we must also guess at $\Sigma^0_2$ facts.

\subsection{Further questions}

In addition to Questions \ref{ques:moses} and \ref{ques:abel} above, there are many more questions to resolve. In \cite{DKLLMT}, it was shown that the index set of the computably categorical structures is $\Pi^1_1$ complete. In \cite{GoncharovBazhenovMarchuk15}, this was extended to show that the index set of the computable structures with computable dimension $n$ is $\Pi^1_1$-complete, for finite $n$. The case of $n = \omega$ is still open. 
\begin{question}
What is the complexity of the index set of the computable structures with computable dimension $\omega$?
\end{question}

In this paper, we considered structures which have one computable copy which is decidable. One could also consider structures all of whose computable copies are decidable. We call such a structure \textit{intrinsically decidable}. One can, as usual, also define a notion of relative intrinsic decidability: A structure is \textit{relatively intrinsically decidable} if, for every isomorphic copy $\mc{A}$ of that structure, the elementary diagram of $\mc{A}$ is computable in $\mathop{deg}(\mc{A})$. By the uniform version of a theorem of Ash, Knight, Manasse, and Slaman \cite{AKMS}, and independently Chisholm \cite{Chisholm}, a computable structure $\mc{A}$ is relatively intrinsically decidable if and only if it has a sort of quantifier elimination: Every elementary first-order definable subset of $\mc{A}$ is (uniformly) definable by a computable infinitary $\Sigma_1$ formula, and also by a computable infinitary $\Pi_1$ formula.
One expects there to be structures which are intrinsically decidable but not relatively intrinsically decidable, as there are, for example, structures which are computably categorical but not relatively computably categorical \cite{Goncharov77}.
Note that deciding whether a structure is relatively intrinsically decidable is arithmetic; however, one might guess that intrinsic decidability is actually $\Sigma^1_1$ complete.
\begin{question}
What is the complexity of the index set of the computable structures all of whose computable copies are decidable?
\end{question}

See also Question \ref{ques:red} which we state later after providing sufficient context.

\section{Some useful lemmas}

\subsection{A sequence of structures}

It is well-known that there are computable structures $\mc{C}_{\infty}$ such that the index set of the computable structures which are isomorphic to $\mc{C}_{\infty}$ is $\Sigma^1_1$-complete. A small modification of the same argument, which we will repeat below in brief, shows that the same is true of decidable structures: There is a decidable structure $\mc{C}_{\infty}$ such that the index set of the decidable structures which are isomorphic to $\mc{C}_{\infty}$ is $\Sigma^1_1$-complete. We will use these structures in the constructions for Theorems \ref{thm:d-pres} and \ref{thm:n-pres}.

To build the structure $\mc{C}_{\infty}$ we will use the following lemma, which is probably folklore; similar results appear in, for example, \cite{Ash91}.

\begin{lemma}
Given a computable linear order $\mathcal{L}$, we can, uniformly in $\mathcal{L}$, build a decidable copy of $\omega^\omega \cdot (1 + \mathcal{L})$.
\end{lemma}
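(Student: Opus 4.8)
The plan is to fix a natural computable presentation of $\omega^\omega \cdot (1 + \mathcal{L})$ and then show it is in fact \emph{decidable} by arguing that the quantifier-rank-$n$ type of every tuple can be computed uniformly in the tuple and in $n$. For the presentation I would take a standard computable copy $W$ of $\omega^\omega$ whose elements are Cantor normal forms $\omega^{k_1} c_1 + \cdots + \omega^{k_m} c_m$ (finitely many strictly decreasing exponents $k_i$, positive coefficients $c_i$), coded as finite sequences and compared lexicographically, together with a computable copy of $1 + \mathcal{L}$ having a distinguished least element $*$ placed below a computable copy of $\mathcal{L}$. The domain is $W \times (1 + \mathcal{L})$ ordered with the $(1+\mathcal{L})$-coordinate most significant; this is a computable copy of $\omega^\omega \cdot (1 + \mathcal{L})$ obtained uniformly from $\mathcal{L}$.

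Next I would invoke the composition method for linear orders (the Feferman--Vaught, Läuchli--Leonard analysis): for each $n$ there are only finitely many $\equiv_n$-classes of linear orders, the $\equiv_n$-type of a pointed order $(\mc{A}, a_1 < \cdots < a_k)$ is computable from the sequence of $\equiv_n$-types of the intervals $(-\infty, a_1), (a_1, a_2), \ldots, (a_k, +\infty)$, and the truth of any formula of rank $\le n$ at $\bar{a}$ is decidable from that type. Since the order is computable I may sort any given tuple, so to prove decidability it suffices to compute, uniformly in the sorted tuple and in $n$, the $\equiv_n$-type of each of these intervals.

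I would then classify the intervals. Using additive indecomposability of $\omega^\omega$ (so that the part of any copy lying above a given point is again isomorphic to $\omega^\omega$ and has a least element), every interval determined by a tuple falls into finitely many shapes: either an ordinal $< \omega^\omega$ whose Cantor normal form is computable from the codes in the tuple (an interval inside a single copy, or a trailing initial segment of a copy), or an order of the form $\omega^\omega \cdot N + \rho$ where $\rho < \omega^\omega$ has computable normal form and $N$ is an interval of $1 + \mathcal{L}$ that is \emph{guaranteed} to have a least element. The role of the summand $1$ in $1 + \mathcal{L}$ is precisely to guarantee this: any initial segment below a non-bottom element contains $*$, and the upper tail of a copy already supplies a least element, so I never have to decide whether an interval of $\mathcal{L}$ is nonempty---only the decidable question ``is this coordinate equal to $*$?''.

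The crux, and the step I expect to be the main obstacle, is the following \textbf{absorption} fact: for every $n$ and every linear order $N$ with a least element, $\omega^\omega \cdot N \equiv_n \omega^\omega$, with $\tau_n(\omega^\omega)$ computable from $n$. Granting this, the $\equiv_n$-type of each interval of the second kind is $\tau_n(\omega^\omega) + \tau_n(\rho)$, computable from $n$ and the normal form of $\rho$; hence every interval type---and so the type of any tuple---is computable uniformly in $\mathcal{L}$ and $n$, which by the composition step yields decidability. I would prove the absorption fact by an Ehrenfeucht--Fra\"iss\'e argument in which Duplicator maintains a correspondence between pebbles preserving their local normal-form data and the $\equiv_r$-types (with $r$ the number of remaining rounds) of the intervals between consecutive pebbles, using the self-similarity of $\omega^\omega$ to render the extra copies contributed by $N$ indistinguishable at finite rank; a ``seam'' between two copies looks, up to depth $n$, exactly like a depth-$n$ limit point already present inside $\omega^\omega$. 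Equivalently, in the composition monoid the color $\tau_n(\omega^\omega)$ is idempotent and absorbing, so summing it over any $N$ with a least element returns $\tau_n(\omega^\omega)$. Verifying this absorption carefully, and confirming that the finitely many interval shapes are genuinely exhausted, is where essentially all of the work lies; everything else is effective bookkeeping with Cantor normal forms.
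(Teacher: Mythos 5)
Your proposal is correct and takes essentially the same route as the paper's proof: the same lexicographic presentation $W \times (1+\mathcal{L})$, the same reduction of a tuple's type to the first-order theories of the consecutive intervals it cuts out (the paper cites Rosenstein's Corollary 13.39 for this composition step), and the same key absorption fact that an interval spanning several copies has the theory of $\omega^\omega + \rho$ because it has the form $\omega^\omega \cdot N + \rho$ with $N$ having a least element (the paper cites Rosenstein's Theorem 6.21 for this, where you propose an Ehrenfeucht--Fra\"iss\'e proof). The only difference is one of self-containedness---you reprove the two cited facts and compute ordinal interval types from Cantor normal forms, where the paper instead uses a decidable copy of $\omega^\omega$ (and of $\omega^\omega+\omega^\omega$) as a black box---not a difference in approach.
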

\begin{proof}
It is well-known that there is a decidable copy, which we will call $\mathcal{W}$, of $\omega^\omega$; we may also choose $\mathcal{W}$ so that $\mathcal{W} + \mathcal{W}$ is decidable. Define $\mathcal{A} = \mathcal{W} \cdot (1 + \mathcal{L})$. We represent elements of $\mathcal{A}$ as pairs $(l,w)$ with $l \in 1 + \mathcal{L}$ and $w \in \mathcal{W}$, ordered lexicographically starting with $l$. We claim that $\mathcal{A}$ is decidable.

Indeed, given a tuple $\bar{a}$, break up $\bar{a}$ into tuples $\bar{a}_1,\ldots,\bar{a}_n$ where each element of $\bar{a}_i$ is of the form $(l_i,w)$ for some $w \in \mathcal{W}$, and $l_1 < \cdots < l_n$. Let $\bar{a}_i$ consist of the elements $a_1^1 < \cdots < a_1^{m_i}$, and let $w_i^j$ be such that $a_i^j = (l_i,w_i^j)$. Then (see Corollary 13.39 of \cite{Rosenstein82}) the complete type of $\bar{a}$ is determined effectively by the elementary first-order theories of the intervals
\[ (-\infty,a_1^1],[a_1^1,a_1^2],\ldots,[a_1^{m_1},a_2^1],[a_2^1,a_2^2],\ldots,[a_2^{m_2},a_3^1],\ldots,[a_n^{m_n},\infty).\]
Each interval $[a_i^j,a_i^{j+1}]$ has the same order type as $[w_i^j,w_i^{j+1}]$ which is decidable, as it is a definable subset of $\mathcal{W}$. The order type of $[a_i^{m_i},a_{i+1}^1]$ is $\omega^\omega \cdot [l_{i},l_{i+1}) + w_{i+1}^1$, which has the same theory as $\omega^\omega + w_{i+1}^1$ (see Theorem 6.21 of \cite{Rosenstein82}); this theory is decidable. The interval $(-\infty,a_1^1]$ has the same theory as either $w_1^1$ (if $l_1$ is smaller than $\mc{L}$) or $\omega^\omega + w_{1}^1$ (if $l_1 \in \mc{L}$). Finally, the interval $[a_n^{m_n},\infty)$ has the same theory as $\omega^\omega$. Thus the type of $\bar{a}$ is computable in $\mc{A}$, and so $\mc{A}$ is decidable. 
\end{proof}

\begin{lemma}\label{lem:seq}
Let $S$ be a $\Sigma^1_1$ set. There is a decidable structure $\mc{C}_\infty$ and a uniformly decidable sequence of structures $(\mc{C}_n)_{n \in \omega}$ such that $\mc{C}_n \cong \mc{C}_\infty$ if and only if $n \in S$. All of these structures are in the same language.
\end{lemma}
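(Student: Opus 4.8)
The plan is to encode a $\Sigma^1_1$ set as a question about well-foundedness of trees, and then use the previous lemma to convert that question into an isomorphism question about decidable linear orders. Recall that any $\Sigma^1_1$ set $S$ can be written as $n \in S \iff T_n \text{ is ill-founded}$, where $(T_n)_{n \in \omega}$ is a uniformly computable sequence of subtrees of $\omega^{<\omega}$. Equivalently, and more usefully here, I can arrange that $n \in S$ if and only if the Kleene--Brouwer linear order $\mathcal{L}_n = KB(T_n)$ is \emph{not} well-ordered, i.e. has a descending sequence. The Kleene--Brouwer order is computable uniformly in $n$, and it is a standard fact that $T_n$ is well-founded iff $KB(T_n)$ is a well-order (an ordinal).

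Next I would feed each $\mathcal{L}_n$ into the preceding lemma. Applying it uniformly, I obtain a uniformly decidable sequence of structures $\mc{C}_n$, each a decidable copy of $\omega^\omega \cdot (1 + \mathcal{L}_n)$. The key point is to understand when two such structures are isomorphic: since $\mc{C}_n \cong \omega^\omega \cdot (1 + \mathcal{L}_n)$, the ordinal-arithmetic behavior of multiplying by $\omega^\omega$ absorbs well-ordered pieces in a controlled way. I would choose the target structure $\mc{C}_\infty$ to be a fixed decidable linear order that $\omega^\omega \cdot (1 + \mathcal{L}_n)$ matches precisely when $\mathcal{L}_n$ fails to be well-ordered. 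The natural candidate is to take $\mc{C}_\infty = \omega^\omega \cdot (1 + \mathcal{L}_\infty)$ for a suitable fixed $\mathcal{L}_\infty$ containing a copy of $\mathbb{Z}$ (or more simply $\omega^* + \omega$, the integers), so that the presence of a descending sequence in $\mathcal{L}_n$ forces an embedded copy of the relevant non-well-ordered block, while well-foundedness of $\mathcal{L}_n$ makes $\mc{C}_n$ itself a well-order and hence not isomorphic to $\mc{C}_\infty$.

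The heart of the argument, and the step I expect to be the main obstacle, is verifying the exact isomorphism condition: I must show $\mc{C}_n \cong \mc{C}_\infty$ \emph{iff} $n \in S$, i.e. iff $\mathcal{L}_n$ is ill-founded. In the forward-hard direction I need that if $\mathcal{L}_n$ is well-founded then $\omega^\omega \cdot (1 + \mathcal{L}_n)$ is an ordinal strictly below some bound, hence cannot be isomorphic to the fixed non-well-ordered $\mc{C}_\infty$; this is immediate since a well-order is never isomorphic to a linear order with a descending chain. In the other direction, I need that whenever $\mathcal{L}_n$ is ill-founded, the product $\omega^\omega \cdot (1 + \mathcal{L}_n)$ is isomorphic to the single fixed order $\mc{C}_\infty$, independent of the particular ill-founded $\mathcal{L}_n$. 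This uniformity is the delicate part: different ill-founded $\mathcal{L}_n$ could a priori have very different order types. I would address this by arranging (via the Kleene--Brouwer construction) that every ill-founded $\mathcal{L}_n$ has the same order type up to the ambiguity that $\omega^\omega \cdot (-)$ washes out—for instance by padding so that each $\mathcal{L}_n$, when ill-founded, is isomorphic to a fixed order such as $\eta$ (the rationals) times a well-order, or more robustly by replacing the isomorphism requirement with the observation that $\omega^\omega$ raised to a dense/ill-founded exponent yields a single canonical type. The cleanest route is likely to pass through $\omega^\omega$-exponentiation of the tree ordering so that ill-foundedness produces exactly one isomorphism type; I would cite the absorption identities for ordinal products (as in Theorem 6.21 of \cite{Rosenstein82}) to collapse the well-ordered initial and final segments and isolate the invariant that distinguishes the two cases.

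Finally, I would confirm that the whole construction is uniform and stays within a single language—which it does, since every $\mc{C}_n$ and $\mc{C}_\infty$ is a linear order—and that decidability of each $\mc{C}_n$ and of $\mc{C}_\infty$ follows directly from the preceding lemma applied to the computable orders $\mathcal{L}_n$ and $\mathcal{L}_\infty$.
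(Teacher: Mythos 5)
Your overall skeleton matches the paper's proof---reduce $S$ to an isomorphism problem about computable linear orders and then apply the preceding lemma to get uniformly decidable copies of $\omega^\omega\cdot(1+\mathcal{L}_n)$---and your easy direction (well-ordered implies not isomorphic to an ill-founded target) is fine. But there is a genuine gap at exactly the step you call delicate, and your proposed fixes do not close it. It is false that ill-foundedness of a computable tree forces its Kleene--Brouwer order, even after left multiplication by $\omega^\omega$, into a single canonical isomorphism type. Concretely, for the tree $T=\{0^n : n\in\omega\}$ we have $KB(T)\cong\omega^*$, while for the full tree $\omega^{<\omega}$ we have $KB(\omega^{<\omega})\cong\eta+1$, where $\eta$ is the order type of $\mathbb{Q}$; both trees are ill-founded. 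The two products $\omega^\omega\cdot(1+\omega^*)$ and $\omega^\omega\cdot(1+\eta+1)$ are not isomorphic: condensing by the (isomorphism-invariant) relation ``the interval $[x,y]$ is well-ordered'' collapses each product onto the corresponding condensation of $1+\omega^*$ and of $1+\eta+1$, giving a two-element order in the first case and an infinite dense order in the second. So multiplication by $\omega^\omega$ does not ``wash out'' the differences among ill-founded orders, and no generic padding of the trees will make all ill-founded $KB(T_n)$ isomorphic. (Note also that the Rosenstein results you invoke, such as Theorem 6.21, give elementary equivalence, not isomorphism, which is what is needed here.) Producing a \emph{single} isomorphism type in the ill-founded case is precisely the hard content your proposal is missing.

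The paper fills this hole by citing a nontrivial known result rather than deriving the dichotomy from raw Kleene--Brouwer orders. Harrison constructed a computable linear order $\mathcal{H}$ of type $\omega_1^{CK}(1+\eta)$, and by a theorem of Cholak--Downey--Harrington or Goncharov--Knight one gets, uniformly in $n$, computable linear orders $\mathcal{L}_n$ such that $\mathcal{L}_n\cong\mathcal{H}$ if $n\in S$, and $\mathcal{L}_n$ is a well-order if $n\notin S$. That cited result is where the real work happens: roughly, one must arrange that the relevant trees, when ill-founded, have no hyperarithmetic path, so that their Kleene--Brouwer orders are forced into the Harrison type up to a computable tail, which is then removed---none of which is achievable by the padding you describe. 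Given that result, the paper takes $\mathcal{C}_n$ to be a decidable copy of $\omega^\omega\cdot(1+\mathcal{L}_n)$ and takes $\mathcal{C}_\infty$ to be a decidable copy of $\mathcal{H}$ itself, using the absorption $\omega^\omega\cdot(1+\mathcal{H})\cong\mathcal{H}$, which holds because $\omega_1^{CK}$ is additively and multiplicatively indecomposable. Your candidate target $\mathcal{C}_\infty=\omega^\omega\cdot(1+\mathcal{L}_\infty)$ with $\mathcal{L}_\infty$ containing a copy of $\mathbb{Z}$ is also not the right invariant: the unique type that the ill-founded case produces is the Harrison type, not a $\mathbb{Z}$-indexed one. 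Your argument becomes correct once ``Kleene--Brouwer order of an ill-founded tree'' is replaced throughout by ``the Harrison order,'' via the cited lemma.
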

\begin{proof}
Harrison \cite{Harrison68} constructed a computable linear order $\mathcal{H}$ of order type $\omega_1^{CK} (1 + \mathbb{Q})$. From \cite[Lemma 5.2]{CholakDowneyHarrington08} or \cite[Theorem 4.4(d)]{GoncharovKnight02}, we get a computable sequence of computable linear orders $(\mathcal{L}_n)_{n \in \omega}$ such that $\mathcal{L}_n$ is isomorphic to $\mathcal{H}$ if and only if $n \in S$. Then letting $\mathcal{C}_n$ be a decidable copy of $\omega^\omega \cdot (1 + \mathcal{L}_n)$, we get a uniformly decidable sequence of structures $(\mathcal{C}_n)_{n \in \omega}$. (We take $\mathcal{C}_\infty$ to be a decidable copy of $\mathcal{H}$, which is isomorphic to $\omega^\omega \cdot (1 + \mathcal{H})$.) If $\mathcal{L}_n$ was well-founded, so is $\mathcal{C}_n$, and if $\mathcal{L}_n$ was isomorphic to $\mathcal{H}$, then so is $\mathcal{C}_n$.
\end{proof}

\subsection{Building decidable structures from disjoint unions}

In this section, we will prove three lemmas about constructing a decidable structure by taking disjoint unions of other decidable structures. We will use these lemmas during the construction.

\begin{lemma}\label{lem:disj}
Let $\mc{A}_1,\ldots,\mc{A}_k$ be decidable structures. Then the disjoint union $\mc{B}$ of $\mc{A}_1,\ldots,\mc{A}_k$, with relations $R_1,\ldots,R_k$ picking out the domains of $\mc{A}_1,\ldots,\mc{A}_k$ respectively, is also decidable. This is uniform.
\end{lemma}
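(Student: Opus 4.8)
The plan is to reduce the decidability of the disjoint union $\mc{B}$ to the decidability of each individual $\mc{A}_j$, via a quantifier-elimination argument that pushes every formula about $\mc{B}$ down to formulas interpreted within the individual components. The key structural fact is that in $\mc{B}$, the unary relations $R_1,\ldots,R_k$ partition the domain into the disjoint copies of $\mc{A}_1,\ldots,\mc{A}_k$, and every atomic relation or function symbol of the original language, when applied to elements, only relates elements lying in the same component (and functions applied across components can be handled by a suitable convention, e.g. fixing a default value or treating the union as a multi-sorted structure). So the plan is to argue that the truth of any first-order formula in $\mc{B}$ is determined by the truth of related formulas in the pieces $\mc{A}_j$.

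First I would set up the language of $\mc{B}$ carefully: it consists of the original symbols together with the new unary predicates $R_1,\ldots,R_k$, and I would make precise the convention handling atomic formulas that mix elements from different components (they evaluate to false, say, or functions default to a fixed basepoint). Then, given a tuple $\bar{a}$ from $\mc{B}$ and a formula $\varphi(\bar{x})$, I would proceed by induction on the structure of $\varphi$. The atomic and boolean cases are immediate from the decidability conventions and from the fact that each $R_j(a)$ is decidable (we can compute which component $a$ lies in). The crux is the quantifier case: for $\varphi(\bar{x}) = \exists y\, \psi(\bar{x},y)$, the witness $y$ must lie in one of the $k$ components, so
\[ \mc{B} \models \exists y\, \psi(\bar{a},y) \iff \bigvee_{j=1}^{k} \mc{B} \models \exists y\,[R_j(y) \wedge \psi(\bar{a},y)]. \]
The right-hand side relativizes the quantifier to a single component, and I would then show that quantification over elements of $\mc{A}_j$, with the parameters $\bar{a}$ split according to their components, reduces to a first-order question in $\mc{A}_j$ about the sub-tuple of $\bar{a}$ lying in $\mc{A}_j$ (with the parameters from other components appearing only in atomic subformulas that are already decided to be false or constant). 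Since $\mc{A}_j$ is decidable, each disjunct is decidable, and hence so is $\varphi$.

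The main obstacle will be the bookkeeping in the inductive step that reduces a relativized formula $\exists y\,[R_j(y) \wedge \psi(\bar{a},y)]$ to a genuine first-order query inside $\mc{A}_j$: I must carefully track how atomic subformulas of $\psi$ that involve both the quantified variable $y$ (now constrained to $\mc{A}_j$) and parameters $a_i$ lying in other components get simplified to their fixed truth values, so that what remains is a formula in the language of $\mc{A}_j$ with parameters only from $\mc{A}_j$. Once this syntactic reduction is formalized, the decidability of $\mc{A}_j$ yields the answer, and tracing through the construction shows the whole procedure is uniform in (indices for) the decidable structures $\mc{A}_1,\ldots,\mc{A}_k$, as claimed.
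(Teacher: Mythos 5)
Your proposal is correct and takes essentially the same route as the paper: there, too, one shows by induction on formulas that every formula about $\mc{B}$ is equivalent to a boolean combination of formulas each concerning a single component (the paper formalizes this via many-sorted logic with sorts $R_1,\ldots,R_k$, which makes your cross-component atomic conventions unnecessary), with the existential step handled by putting the matrix in disjunctive normal form and pushing the quantifier onto the conjunct belonging to the relevant component. The bookkeeping you identify as the main obstacle is exactly this DNF manipulation, so your plan formalizes into the paper's argument.
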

\begin{proof}
It suffices to show that $\mc{B}$ is decidable with respect to the many-sorted logic with sorts defined by $R_1,\ldots,R_k$. The many-sorted logic has quantifiers which range only over a single sort $R_i$, and the relations of a structure $\mc{A}_i$ are restricted to the sort $R_i$. Indeed, it is easy to translate any formula in the single-sorted language of $\mc{B}$ to an equivalent formula in the many-sorted language. In what follows, by an $\mc{A}_i$-formula we mean a formula involving only the sort $\mc{A}_i$.

We can easily argue by induction on formulas that each formula $\varphi$ in the many-sorted language of $\mc{B}$ is equivalent to a boolean combination of $\mc{A}_i$-formulas. For example, if $\varphi \equiv (\exists x \in R_p) \psi$, and (placing the boolean combination equivalent to $\psi$ in disjunctive normal form)
\[ \psi \equiv \bigvee_{i=1}^r \bigwedge_{j = 1}^k \theta_{i,j} \]
where $\theta_{i,j}$ is a $\mc{A}_j$-formula, we get that
\[ \varphi \equiv \bigvee_{i=1}^r \bigwedge_{j = 1}^k \theta_{i,j}' \]
where $\theta_{i,p}' = (\exists x \in R_p)\theta_{i,p}$ and $\theta_{i,j}' = \theta_{i,j}$ if $j \neq p$.

Then given a formula $\varphi$ in the many-sorted language of $\mc{B}$, write $\varphi$ as a boolean combination of $\mc{A}_i$-formulas:
\[ \varphi \equiv \bigvee_{i=1}^r \bigwedge_{j = 1}^k \theta_{i,j} \]
where $\theta_{i,j}$ is a $\mc{A}_j$-formula.
We can decide the truth of each $\theta_{i,j}$ as each $\mc{A}_j$ is decidable, and hence we can decide the truth of $\varphi$.
\end{proof}

A slightly more complicated argument proves the following similar lemma.

\begin{lemma}\label{lem:disj-mixed}
Let $\mc{A}_1,\ldots,\mc{A}_k$ be decidable structures. Then the disjoint union $\mc{B}$ of $\mc{A}_1,\ldots,\mc{A}_k$, with an equivalence relation $E$ whose equivalence classes pick out the structures $\mc{A}_i$, is also decidable. This is uniform.
\end{lemma}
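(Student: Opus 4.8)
The plan is to mimic the proof of Lemma \ref{lem:disj}, reducing the decidability of $\mc{B}$ (now equipped with a single equivalence relation $E$ rather than named sort predicates $R_1,\ldots,R_k$) to the decidability of each $\mc{A}_i$. The essential difference is that the equivalence relation $E$ does not, on its own, name \emph{which} class is which; a quantifier $\exists x$ over $\mc{B}$ must be interpreted either as a quantifier ranging within a single $E$-class or as a choice of one of the $k$ classes followed by quantification inside it. So the argument will again be an induction on formulas, showing that each formula of $\mc{B}$ is equivalent to a boolean combination of formulas each of which is relativized to a single one of the sorts $\mc{A}_i$, but I will have to do the bookkeeping for the quantifiers more carefully.

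First I would set up, as in the previous lemma, a many-sorted language with sorts $R_1,\ldots,R_k$ picking out the domains of $\mc{A}_1,\ldots,\mc{A}_k$. The key preliminary observation is that any formula in the language of $\mc{B}$ (whose only non-logical symbol not present in the $\mc{A}_i$ is $E$) can be rewritten in the many-sorted language: an atomic formula $E(x,y)$ becomes $\bigvee_{i=1}^k (x \in R_i \wedge y \in R_i)$, expressing that $x$ and $y$ lie in the same sort, and a single-sorted quantifier $\exists x\, \theta(x)$ becomes $\bigvee_{i=1}^k (\exists x \in R_i)\, \theta(x)$, splitting the quantifier into a finite disjunction over the sorts. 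Once this translation is in place, the induction is essentially identical to that of Lemma \ref{lem:disj}: by putting the boolean combination equivalent to the matrix in disjunctive normal form and pushing each sorted existential quantifier $(\exists x \in R_p)$ inside, it only touches the $\mc{A}_p$-conjuncts, yielding again a boolean combination of $\mc{A}_i$-formulas. Since each $\mc{A}_i$ is decidable, we can decide each such relativized formula, and hence $\varphi$. Uniformity is immediate because every step of the translation and the induction is effective in the given indices.

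The main obstacle, and the place where this differs from the previous proof, is exactly the handling of $E$ and of the quantifiers: because $E$ is a single relation covering all classes rather than a collection of sort predicates, I must verify that the rewriting $E(x,y) \mapsto \bigvee_i (x \in R_i \wedge y \in R_i)$ is correct and that it interacts properly with the disjunctive-normal-form manipulation when a variable appearing in an $E$-atom is later quantified. Concretely, after the translation every occurrence of a variable carries an implicit sort assertion of the form $x \in R_i$, and one has to confirm that these sort assertions are consistent across the conjuncts of each disjunct (a disjunct asserting both $x \in R_i$ and $x \in R_j$ for $i \neq j$ is simply false and may be discarded). This is the ``slightly more complicated'' part alluded to in the statement, but it is purely syntactic bookkeeping; once it is checked, decidability of $\mc{B}$ follows exactly as before, and the construction is uniform.
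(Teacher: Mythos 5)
Your proof is correct, but it takes a somewhat different route from the paper's. The paper's own proof is a one-line sketch: after naming one element $c_i$ from each of the $k$ equivalence classes, $\mc{B}$ becomes effectively bi-interpretable, via first-order formulas, with the sorted structure of Lemma \ref{lem:disj} (the sorts become definable by $R_i(x) \equiv E(x,c_i)$), and decidability transfers across such a bi-interpretation and is preserved under adding finitely many constants. You instead argue syntactically: since $\mc{B}$ and the sorted structure of Lemma \ref{lem:disj} have the same domain and the same $\mc{A}_i$-relations, and $E$ is definable from the $R_i$ by $E(x,y) \equiv \bigvee_{i=1}^k \left( R_i(x) \wedge R_i(y) \right)$, every formula in the language of $\mc{B}$ translates effectively to an equivalent formula in the language of Lemma \ref{lem:disj}, whose truth can then be decided. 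Note that only this one direction of interpretation is needed for decidability, which is why you never have to name constants; the predicates $R_i$ are in general \emph{not} definable in $\mc{B}$ without parameters (an automorphism of $\mc{B}$ may permute isomorphic classes), and this is exactly why the paper's bi-interpretation must name an element of each class. Your approach buys a self-contained, parameter-free argument---and in fact, once the translation is done, you could simply cite Lemma \ref{lem:disj} rather than repeat its disjunctive-normal-form induction; the paper's approach buys brevity, at the cost of invoking the machinery of effective bi-interpretability and the (easy but unstated) fact that decidability survives the addition of finitely many constants.
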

\begin{proof}[Proof sketch]
The structure $\mc{B}$ is effectively bi-interpretable, using first-order formulas, with the structure from the previous lemma after naming one element from each of the $k$ equivalence classes.
\end{proof}

The third, and final, lemma allows us to take the disjoint union of infinitely many structures, as long as they are all elementarily equivalent.

\begin{lemma}\label{lem:disj-inf}
Let $(\mc{A}_i)_{i \in \omega}$ be a sequence of uniformly decidable structures.
Suppose that for each $i$ and $j$, $\mc{A}_i \equiv \mc{A}_j$.
Let $\mc{B}$ be the disjoint union of the $\mc{A}_i$, with an equivalence relation $E$ whose equivalence classes pick out the structures $\mc{A}_i$.
Then $\mc{B}$ is decidable. This is uniform.
\end{lemma}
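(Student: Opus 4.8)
The plan is to reduce, uniformly in the input formula, to the finite disjoint union handled by Lemma \ref{lem:disj-mixed}. Fix a formula $\varphi(\bar x)$ of quantifier rank $q$ and a tuple $\bar a$ from $\mc B$. The elements of $\bar a$ lie in only finitely many $E$-classes; let $F \subseteq \omega$ be the set of indices $i$ such that $\mc A_i$ contains an element of $\bar a$, which is computable from $\bar a$ since the sequence is uniformly decidable and the disjoint union is effectively presented. Let $\mc B'$ be the disjoint union, again equipped with an equivalence relation $E$ marking the classes, of the structures $\mc A_i$ for $i \in F$ together with $q$ further copies $\mc A_j$ for the first $q$ indices $j \notin F$. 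Then $\bar a$ is naturally a tuple of $\mc B'$, and by Lemma \ref{lem:disj-mixed} the finite disjoint union $\mc B'$ is decidable, uniformly in its list of components. So it suffices to prove the following. \emph{Claim:} $\mc B \models \varphi(\bar a)$ if and only if $\mc B' \models \varphi(\bar a)$.

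The key step is an Ehrenfeucht--Fra\"iss\'e argument establishing the Claim; this is where the hypothesis $\mc A_i \equiv \mc A_j$ does the real work. Since all components are elementarily equivalent, in particular $\mc A_i \equiv_q \mc A_j$ for all $i,j$, so Duplicator wins the $q$-round EF game between any two components. I would have Duplicator play the $q$-round game between $(\mc B, \bar a)$ and $(\mc B', \bar a)$ by maintaining a partial matching between the $E$-classes of $\mc B$ and those of $\mc B'$: the classes indexed by $F$ are matched to their literal copies, carrying the same elements of $\bar a$, and whenever Spoiler first plays in a previously untouched class on one side, Duplicator opens and matches a previously untouched class on the other side. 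Within each matched pair of classes Duplicator follows a winning strategy for the within-class game---the copying strategy for the $F$-classes, and a strategy witnessing $\equiv_q$ for a freshly matched pair. The only relation linking distinct classes is $E$, so preserving the class-matching preserves all $E$-facts, while every other atomic fact is internal to a single class and is preserved by the within-class strategies. Because the game lasts only $q$ rounds, Spoiler opens at most $q$ fresh classes, and each matched class sees at most $q$ moves; since $\mc B'$ was given exactly $q$ spare copies Duplicator never runs out of classes to match, and $\mc B$, having infinitely many, certainly does not. Hence Duplicator wins, $(\mc B, \bar a) \equiv_q (\mc B', \bar a)$, and as $\varphi$ has quantifier rank $q$ the Claim follows.

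Given the Claim, the decision procedure is immediate: compute $q$ and $F$ from $\varphi$ and $\bar a$, assemble the finite list of components of $\mc B'$, and apply the uniform decision procedure of Lemma \ref{lem:disj-mixed} to decide $\mc B' \models \varphi(\bar a)$, which agrees with $\mc B \models \varphi(\bar a)$. Every step is uniform in $\varphi$, $\bar a$, and an index for the uniformly decidable sequence $(\mc A_i)_{i \in \omega}$, which gives the uniform decidability of $\mc B$. The main obstacle is the bookkeeping in the Claim---checking that the within-class and cross-class strategies genuinely combine into a single winning strategy and that exactly $q$ spare copies suffice for a rank-$q$ formula; everything else is routine given Lemma \ref{lem:disj-mixed}.
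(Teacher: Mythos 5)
Your proposal is correct and follows essentially the same route as the paper's proof: restrict to the finite disjoint union consisting of the components meeting $\bar{a}$ plus $q$ spare components, prove the equivalence via an Ehrenfeucht--Fra\"iss\'e class-matching argument (the paper phrases this as an induction on rounds with the counting condition $r+m \leq n+\ell$, which is your ``at most $q$ fresh classes'' bookkeeping), and then decide the finite union by Lemma \ref{lem:disj-mixed}.
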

\begin{proof}
View the structures as relational structures. Given a formula $\varphi(x_1,\ldots,x_\ell)$ and $a_1,\ldots,a_\ell$, we need to decide whether $\mc{B} \models \varphi(a_1,\ldots,a_\ell)$. Let $n$ be the quantifier depth of $\varphi$. Let $\mc{B}^*$ be substructure of $\mc{B}$ which consists of those structures $\mc{A}_i$ containing $a_1,\ldots,a_\ell$ and $n$ other structures $\mc{A}_i$. We claim that $\mc{B} \models \varphi(a_1,\ldots,a_\ell)$ if and only if $\mc{B}^* \models \varphi(a_1,\ldots,a_\ell)$. Since $\mc{B}^*$ is decidable, uniformly in $n$, by the previous lemma, we can decide whether $\mc{B} \models \varphi(a_1,\ldots,a_\ell)$. Thus $\mc{B}$ is decidable, and this is uniform.

To see that $\mc{B} \models \varphi(a_1,\ldots,a_\ell)$ if and only if $\mc{B}^* \models \varphi(a_1,\ldots,a_\ell)$, we can play the Ehrenfeucht-Fra\"ss\'e game with depth $n$. Denote by $\mc{M} \overset{r}{\sim} \mc{N}$ that Duplicator has a winning strategy for the Ehrenfreucht-Fra\"sse\'e game with $r$ moves, i.e., that $\mc{M}$ and $\mc{N}$ satisfy the same formulas with quantifier depth $r$. We want to show that $(\mc{B}^*;a_1,\ldots,a_\ell) \overset{n}{\sim} (\mc{B};a_1,\ldots,a_\ell)$. To prove this, it is more convenient to prove a stronger claim

\begin{claim}
Given $r$ and $m$ with $r + m \leq n + \ell$, tuples $\bar{x}_1 \in A_{j_1},\ldots,\bar{x}_m \in A_{j_m}$, all in $\mc{B}^*$, and $\bar{y}_1 \in A_{k_1},\ldots,\bar{y}_m \in A_{k_m}$ (with no repetition among the lists of the structures), $(\mc{B}^*;\bar{x}_1,\ldots,\bar{x}_m) \overset{r}{\sim} (\mc{B};\bar{y}_1,\ldots,\bar{y}_m)$ if and only if for each $i$, $(\mc{A}_{j_i};\bar{x}_i) \overset{r}{\sim} (\mc{A}_{k_i};\bar{y}_i)$.
\end{claim}

From this, if we take $r = n$ and (rearranging $a_1,\ldots,a_\ell$) take \[ (a_1,\ldots,a_\ell) = (\bar{x}_1,\ldots,\bar{x}_m) = (\bar{y}_1,\ldots,\bar{y}_m) \] with $j_i = k_i$ for all $i$, then we immediately get that $(\mc{B}^*;a_1,\ldots,a_\ell) \overset{n}{\sim} (\mc{B};a_1,\ldots,a_\ell)$ as desired. So the proof of the claim will finish the proof of the lemma.

\medskip{}

\textit{Proof of claim.} The proof of this claim is by induction on $r$. For $r = 0$, $\bar{x}_1,\ldots,\bar{x}_m$ satisfy the same atomic formulas in $\mc{B}^*$ as $\bar{y}_1,\ldots,\bar{y}_m$ do in $\mc{B}$ if and only if for each $i$, $\bar{x}_i \in \mc{A}_{j_i}$ satisfies the same atomic formulas in $\mc{A}_{j_i}$ as $\bar{y}_i$ does in $\mc{A}_{k_i}$. Given $r > 0$, it is clear that if $(\mc{B}^*;\bar{x}_1,\ldots,\bar{x}_m) \overset{r}{\sim} (\mc{B};\bar{y}_1,\ldots,\bar{y}_m)$ then for each $i$, $(\mc{A}_{j_i};\bar{x}_i) \overset{r}{\sim} (\mc{A}_{k_i};\bar{y}_i)$. For the other direction, suppose that for each $i$, $(\mc{A}_{j_i};\bar{x}_i) \overset{r}{\sim} (\mc{A}_{k_i};\bar{y}_i)$. Given $y' \in \mc{B}$, we must find $x' \in \mc{B}^*$ such that $(\mc{B}^*;\bar{x}_1,\ldots,\bar{x}_m,x') \overset{r-1}{\sim} (\mc{B};\bar{y}_1,\ldots,\bar{y}_m,y')$. (The other case---finding $y' \in \mc{B}$ given $x' \in \mc{B}^*$---is similar and actually easier.) 

\textbf{Case 1}. If $y' \in A_{k_i}$ for some $i = 1,\ldots,m$, then since $(\mc{A}_{j_i};\bar{x}_i) \overset{r}{\sim} (\mc{A}_{k_i};\bar{y}_i)$, there is $x' \in A_{j_i}$ such that $(\mc{A}_{j_i};\bar{x}_i x') \overset{r-1}{\sim} (\mc{A}_{k_i};\bar{y}_i y')$. Thus, by the induction hypothesis, $(\mc{B}^*;\bar{x}_1,\ldots,\bar{x}_m,x') \overset{r-1}{\sim} (\mc{B};\bar{y}_1,\ldots,\bar{y}_m,y')$.

\textbf{Case 2}. Otherwise, let $k_{m+1}$ be such that $y' \in A_{k_{m+1}}$. Since $r + m \leq n + \ell$, we can choose $j_{m+1}$ different from $j_1,\ldots,j_m$ such that $A_{j_{m+1}}$ is included in $\mc{B}^*$. Since $A_{k_{m+1}} \equiv A_{j_{m+1}}$, we can find $x' \in A_{j_{m+1}}$ such that $(A_{k_{m+1}},y') \overset{r-1}{\sim} (A_{j_{m+1}},x')$. We then have, with $\bar{x}_{m+1} = x'$ and $\bar{y}_{m+1} = y'$, that $(\mc{A}_{j_i};\bar{x}_i) \overset{r-1}{\sim} (\mc{A}_{k_i};\bar{y}_i)$ for $i = 1,\ldots,m+1$ and that $(r-1) + (m+1) \leq n + \ell$. So $(\mc{B}^*;\bar{x}_1,\ldots,\bar{x}_m,x') \overset{r-1}{\sim} (\mc{B};\bar{y}_1,\ldots,\bar{y}_m,y')$ by the induction hypothesis.
\end{proof}

\section{1-presentable structures}\label{sec:1}

In this section, we will prove the case $n=1$ of Theorem \ref{thm:n-pres}: The index set of 1-presentable structures is $\Sigma^1_1$-complete. The general case is essentially the same, but restricting to the case $n=1$ will make the proof more readable, and, in fact, the case $n \geq 2$ will follow from the proof of Theorem \ref{thm:d-pres}. (See Section \ref{sec:sec-part}.)

Fix a $\Sigma^1_1$ set $S$. We must build a uniformly computable sequence of computable structures $(\mc{M}_n)_{n \in \omega}$ such that $\mc{M}_n$ is 1-presentable if and only if $n \in S$. Fix, as in Lemma \ref{lem:seq}, decidable structures $\mc{C}_n$ and $\mc{C}_\infty$ such that $\mc{C}_n \cong \mc{C}_{\infty}$ if and only if $n \in S$. We will use $\mc{C}_n$ and $\mc{C}_{\infty}$ in the construction of $\mc{M}_n$. Also fix a computable listing $(\mc{D}_i)_{i \in \omega}$ of the (possibly partial) 1-diagrams of the 1-decidable structures.

The structures $\mc{M}_n$ will be the disjoint union of infinitely many structures $(\mc{A}_i)_{i \in \omega}$, each distinguished in $\mc{M}_n$ by some unary relation $P_i$. (We may assume that each of the structures $\mc{D}_i$ is a partial structure of this form.) There are two properties that we want from the construction of $\mc{A}_i$:
\begin{enumerate}
	\item If $n \in S$, then $\mc{A}_i$ will have a $1$-decidable presentation uniformly in $i$.
	\item If $n \notin S$ and $\mc{D}_i$ is a 1-decidable structure, then $\mc{A}_i$ will not be isomorphic to the structure with domain $P_i$ in the 1-decidable structure $\mc{D}_i$.
\end{enumerate}
Thus, if $n \in S$, then we can build a 1-decidable presentation of $\mc{M}_n$ by building 1-decidable copies of each $\mc{A}_i$. On the other hand, if $n \notin S$, then $\mc{M}_n$ is not 1-presentable as it cannot be isomorphic to any 1-decidable structure $\mc{D}_i$.

For the remainder of the construction, we can fix $i$. For simplicity, denote $\mc{A}_i$ by $\mc{A}$ and let $\mc{D}$ be the structure with domain $P_i$ in $\mc{D}_i$. So we want to build $\mc{A}$ so that if $n \in S$, then $\mc{A}$ will have a $1$-decidable presentation (which we can construct uniformly), and if $n \notin S$, then $\mc{A}$ is not isomorphic to $\mc{D}$.

\subsection{\texorpdfstring{$\Sigma^0_1$ labeling of 1-decidable structures}{c.e. labeling of 1-decidable structures}}
\label{sec:labels}

Given a 1-decidable structure $\mc{A}$, we will describe how to add labels to $\mc{A}$ which are $\Sigma^0_1$ over the 1-diagram of $\mc{A}$ using a construction which is essentially a Marker extension \cite{Marker}. Intuitively, what we want to do is as follows. We want to be able to attach labels to elements of $\mc{A}$ in a c.e.\ way---that is, so that at any stage, we can add a label to a node---so that the resulting structure, with the labels attached, is also 1-decidable, and so that in the 1-diagram of an isomorphic copy of $\mc{A}$, we can enumerate the labels.

More formally, fix an infinite computable set $\mc{L}$ of labels. Given a sequence of subsets $X = (X_{\ell})_{\ell \in \mc{L}}$ of $\mc{A}$, we want to define a three-sorted structure $\mc{A}^X$, whose first sort is just the structure $\mc{A}$, as follows. We will refer to the sorts as $\mc{A}$, $\mc{S}_1$, and $\mc{S}_2$. The language of $\mc{A}^X$ will be the language of $\mc{A}$ augmented with functions $f \colon \mc{S}_1 \to \mc{A}$ and $g \colon \mc{S}_2 \to \mc{S}_1$, a unary relation $U^\ell \subseteq \mc{S}_1$ for each $\ell \in \omega$, and a unary relation $R \subseteq \mc{S}_2$.

For each element $x$ of $\mc{A}$, there will be infinitely many elements $y$ of the second sort $\mc{S}_1$ with $f(y) = x$. These will be partitioned into infinitely many disjoint sets $U^\ell$ for $\ell \in \omega$. Each element of $\mc{S}_1$ will be the pre-image, under $f$, of some $x \in \mc{A}$.

For each element $y$ of $\mc{S}_1$, there will be infinitely many elements $z \in \mc{S}_2$ with $g(z) = y$, and each element of $\mc{S}_2$ will be the pre-image, under $g$, of some $y \in \mc{S}_1$.

For every $x \in \mc{A}$, there will be infinitely many $y \in f^{-1}(x) \cap U^\ell$ such that there are infinitely many $z \in g^{-1}(y)$ with $R(z)$, and infinitely many $z \in g^{-1}(y)$ with $\neg R(z)$.
If $x \notin X_\ell$, this will be the case for all $y \in f^{-1}(x) \cap U^\ell$, but if $x \in X_\ell$, then there will also be infinitely many $y \in f^{-1}(x) \cap U^\ell$ such that for all $z \in g^{-1}(y)$, $R(z)$.

The next three lemmas show that this construction does what we want it to do.

\begin{lemma}\label{lem:ea-def}
Let $\mc{A}$ be a structure and let $X = (X_{\ell})_{\ell \in \mc{L}}$ be subsets of $\mc{A}$. The sets $X_{\ell}$ are definable in $\mc{A}^X$ by $\exists \forall$ formulas, and these formulas are uniform in $\ell$ and independent of $\mc{A}$ or $X$.
\end{lemma}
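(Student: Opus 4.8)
The plan is to exhibit a single explicit formula that reads off, from the branching behaviour of the $g$-preimages, whether a given $x$ was placed into $X_\ell$. Recall from the construction that for $x \notin X_\ell$ every $y \in f^{-1}(x) \cap U^\ell$ is \emph{mixed}, meaning it has infinitely many $z \in g^{-1}(y)$ with $R(z)$ and infinitely many with $\neg R(z)$; whereas for $x \in X_\ell$ there additionally exist $y \in f^{-1}(x) \cap U^\ell$ all of whose $g$-preimages satisfy $R$. Thus membership in $X_\ell$ is detected exactly by the presence of such a \emph{pure} node above $x$ in the $\ell$th label class. I therefore propose the formula
\[ \psi_\ell(x) \;\equiv\; (\exists y \in \mc{S}_1)\bigl[\, f(y) = x \wedge U^\ell(y) \wedge (\forall z \in \mc{S}_2)\bigl(g(z) = y \rightarrow R(z)\bigr)\,\bigr]. \]

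Next I would verify $\mc{A}^X \models \psi_\ell(x) \iff x \in X_\ell$. For the forward inclusion, if $x \in X_\ell$ then the construction supplies a pure $y \in f^{-1}(x) \cap U^\ell$; this $y$ witnesses the outer existential, and purity makes the inner universal true, so $\psi_\ell(x)$ holds. Conversely, if $x \notin X_\ell$, then every $y \in f^{-1}(x) \cap U^\ell$ is mixed and hence carries some $z \in g^{-1}(y)$ with $\neg R(z)$; so no $y$ over $x$ in class $\ell$ can witness the existential, and $\psi_\ell(x)$ fails. The forward direction uses only the \emph{existence} of a pure node, while the converse uses the full strength of the construction, namely that \emph{all} nodes over $x$ in $U^\ell$ are mixed when $x \notin X_\ell$.

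Finally, $\psi_\ell$ is plainly an $\exists\forall$ formula: a single existential over $\mc{S}_1$ followed by a single universal over $\mc{S}_2$, with a quantifier-free matrix. It depends on $\ell$ only through the symbol $U^\ell$, and it refers neither to $\mc{A}$ nor to $X$, which yields exactly the uniformity in $\ell$ and the independence of $\mc{A}$ and $X$ demanded by the statement. I do not expect a serious obstacle here; the lemma is essentially immediate once the defining formula is correctly identified, and the only real risk is a mis-specification of $\psi_\ell$—for instance inverting the quantifier nesting (which would destroy the $\exists\forall$ form or break one of the two directions) or using the wrong sort or the wrong label relation. The content of the lemma lies entirely in the care taken when setting up $\mc{A}^X$, so the verification above is the whole proof.
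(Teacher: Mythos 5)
Your proposal is correct and coincides with the paper's own proof: the paper defines $X_\ell$ by exactly the formula $(\exists y \in \mc{S}_1)\left[ f(y) = x \wedge U^\ell(y) \wedge (\forall z \in \mc{S}_2)(g(z) = y \rightarrow R(z))\right]$ and leaves the verification implicit. Your additional check of both directions (pure witness when $x \in X_\ell$, all fibres mixed when $x \notin X_\ell$) is exactly the intended reading of the construction of $\mc{A}^X$.
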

\begin{proof}
The set $X_\ell$ is definable as the subset of the first sort of $\mc{A}^X$ defined by $(\exists y \in \mc{S}_1) \left[ f(y) = x \wedge U^\ell(y) \wedge (\forall z \in \mc{S}_2)(g(z) = y \rightarrow R(z))\right]$.
\end{proof}

\begin{lemma}\label{lem:lab-comp}
Let $\mc{A}$ be a computable structure and let $X = (X_{\ell})_{\ell \in \mc{L}}$ be a computable sequence of codes for c.e.\ subsets of $\mc{A}$. Then, uniformly in $X$ and in the atomic diagram of $\mc{A}$, we can build a computable copy of $\mc{A}^X$.
\end{lemma}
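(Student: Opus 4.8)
The plan is to exhibit the computable copy of $\mc{A}^X$ by directly specifying computable domains for the three sorts together with computable interpretations of $f$, $g$, the relations $U^\ell$ and $R$, and the relations of $\mc{A}$ on the first sort. Since the properties listed before the statement determine $\mc{A}^X$ up to isomorphism, it suffices to produce \emph{some} computable structure satisfying them. For the first sort I would simply copy $\mc{A}$, which is possible since its atomic diagram is computable. For $\mc{S}_1$ I would take the computable set of tuples $(x,\ell,t,k)$ with $x \in \mc{A}$, $\ell \in \omega$, $t \in \{0,1\}$, and $k \in \omega$, setting $f((x,\ell,t,k)) = x$ and declaring $U^\ell$ to hold exactly of those tuples whose second coordinate is $\ell$; for $\mc{S}_2$ I would take the tuples $(y,j)$ with $y \in \mc{S}_1$ and $j \in \omega$, setting $g((y,j)) = y$. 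All of this is computable and uniform, and it already secures the fiber conditions: each $x$ has infinitely many $f$-preimages in each $U^\ell$, and each $y \in \mc{S}_1$ has infinitely many $g$-preimages.

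The only real content is the definition of $R$, and this is where the fact that the $X_\ell$ are merely c.e.\ must be handled. The coordinate $t$ splits the $f$-fiber over $(x,\ell)$ into a \emph{mixed family} ($t = 0$) and a \emph{candidate family} ($t = 1$), each infinite and indexed by $k$. For $z = (y,j)$ with $y = (x,\ell,0,k)$ in the mixed family I would set $R(z)$ to hold iff $j$ is even, so every mixed-family node has infinitely many preimages with $R$ and infinitely many with $\neg R$. For $z = (y,j)$ with $y = (x,\ell,1,k)$ in the candidate family I would consult the $k$-step approximation $X_{\ell,k}$ to $X_\ell$ (computable uniformly in $\ell$ and $k$): if $x \in X_{\ell,k}$ then set $R(z)$ for all $j$, making this node all-$R$, and otherwise set $R(z)$ to hold iff $j$ is even, making it mixed. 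Because membership in $X_{\ell,k}$ is decidable, $R$ is computable, uniformly in $X$ and the diagram of $\mc{A}$.

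It then remains to verify the required type distribution, which is a short case analysis. If $x \notin X_\ell$ then $x \notin X_{\ell,k}$ for every $k$, so every candidate node is mixed and hence \emph{every} node over $(x,\ell)$ is mixed, as required. If instead $x$ enters $X_\ell$ at stage $s_0$, then $x \in X_{\ell,k}$ for every $k \geq s_0$, so the candidate nodes $(x,\ell,1,k)$ with $k \geq s_0$ give infinitely many all-$R$ nodes, while the mixed family still supplies infinitely many mixed nodes (this is precisely why the $t=0$ family is kept, since only finitely many candidates, those with $k < s_0$, remain mixed). The main obstacle is exactly the tension between needing $R$ computable and $X_\ell$ being only c.e.; staggering the candidate family by the approximation index $k$ resolves it, converting the c.e.\ condition ``$x \in X_\ell$'' into the decidable conditions ``$x \in X_{\ell,k}$'' without disturbing the limiting type of any fiber.
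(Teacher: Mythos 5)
Your proposal is correct and is essentially the paper's own construction: both turn the c.e.\ condition ``$x \in X_\ell$'' into a computable one by consulting the stage-by-stage enumeration, and both arrange the fibers so that the limiting node types (all-$R$ versus mixed) come out right. The only difference is bookkeeping---the paper makes a node all-$R$ exactly when its stage coordinate equals the exact entry stage of $x$ into $X_\ell$ (using a second coordinate for multiplicity), while you make candidates all-$R$ once $x \in X_{\ell,k}$ and reserve a separate family to guarantee infinitely many mixed nodes; the resulting structures are isomorphic.
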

\begin{proof}
The copy of $\mc{A}^X$ we build will have the computable copy of $\mc{A}$ in the first sort, the second sort will contain elements $(x,\ell,s,t)$, and the third sort will contain elements $(x,\ell,s,t,u)$. We define
\begin{align*}
U^\ell &= \{ (x,\ell,s,t) \in S_1 \} \\
f &\colon \mc{S}_2 \to \mc{S}_1 \text{ defined by } (x,\ell,s,t,u) \mapsto (x,\ell,s,t) \\
g &\colon \mc{S}_1 \to \mc{A} \text{ defined by } (x,\ell,s,t) \mapsto x.
\end{align*}
It only remains to define the relation $R$. Given $s$, $t$, and $u$, we will have $R(x,\ell,s,t,u)$ if and only if $u$ is even or if $u$ is odd and $x$ enters $X_\ell$ exactly at stage $s$.
\end{proof}

\begin{lemma}\label{lem:lab-1dec}
Let $\mc{A}$ be a 1-decidable structure and let $X = (X_{\ell})_{\ell \in \mc{L}}$ be a computable sequence of codes for c.e.\ subsets of $\mc{A}$. Then, uniformly in $X$ and in the 1-diagram of $\mc{A}$, we can build a 1-decidable copy of $\mc{A}^X$.
\end{lemma}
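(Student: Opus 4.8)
The plan is to start from the computable copy $\mc{B} \cong \mc{A}^X$ produced by Lemma \ref{lem:lab-comp} (which is already uniform in $X$ and in the atomic diagram of $\mc{A}$, and a fortiori in its $1$-diagram) and to show that this copy is $1$-decidable. Since a decision procedure for the existential ($\Sigma_1$) formulas immediately yields one for the $\Pi_1$ formulas by negation, it suffices to decide, uniformly, whether a given existential formula $\exists \bar{w}\,\theta(\bar{w},\bar{a})$ with $\theta$ quantifier-free holds of a tuple $\bar{a}$ of $\mc{B}$. I would work in the three-sorted language (the translation to the single-sorted language being routine, as in Lemma \ref{lem:disj}). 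Writing the matrix $\theta$ in disjunctive normal form and distributing the existential quantifiers over the disjunction, it is enough to decide an existential formula whose matrix is a single conjunction of literals.

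First I would reduce such a query to an existential question about $\mc{A}$ together with a purely combinatorial check on the extension. Recall $f \colon \mc{S}_1 \to \mc{A}$ and $g \colon \mc{S}_2 \to \mc{S}_1$, so the only terms landing in $\mc{A}$ are $\mc{A}$-terms built over first-sort elements together with the images $f(y)$ and $f(g(z))$. I would enumerate all finite \emph{patterns} describing, for the parameters and the existential witnesses, their sorts, which of them are identified, the $U^\ell$ to which each $\mc{S}_1$-element belongs and the $R$-value of each $\mc{S}_2$-element, and the $f$- and $g$-images of each witness, each image being either identified with one of the listed elements of its target sort or declared fresh. Each pattern determines (i) a $\Sigma_1$ formula over $\mc{A}$ whose parameters are the given $\mc{A}$-parameters of $\bar{a}$ together with the images $f(p)$ and $f(g(q))$ of the $\mc{S}_1$- and $\mc{S}_2$-parameters (all computable from $\mc{B}$), and whose existentially quantified variables are the fresh $\mc{A}$-images of fresh witnesses; this formula encodes exactly the literals of $\theta$ that concern the first sort together with the equalities and inequalities forced among first-sort elements; and (ii) a consistency condition on the extension part, asserting that the chosen $U^\ell$-memberships, $R$-values, and identifications can be simultaneously realized.

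The claim would then be that $\exists\bar{w}\,\theta$ holds in $\mc{B}$ if and only if for some pattern both (i) holds in $\mc{A}$ and (ii) is consistent. Part (i) is decidable, uniformly, because $\mc{A}$ is $1$-decidable and all the parameters involved are computable from $\mc{B}$; and there are only finitely many patterns relative to the variables and parameters occurring, so the whole procedure is decidable and uniform. The main obstacle is verifying the claim, and in particular that (ii) is both decidable and independent of $X$. Here I would use the genericity built into the construction: for every $x \in \mc{A}$ and every $\ell$ there are infinitely many $y \in f^{-1}(x) \cap U^\ell$ each carrying infinitely many preimages $z \in g^{-1}(y)$ with $R(z)$ and infinitely many with $\neg R(z)$, so any finite pattern asking only for such generic witnesses is realizable with room to spare to meet all distinctness demands. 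The crucial point---which is exactly why only the $1$-diagram of $\mc{A}$ is needed, and why the c.e.\ (rather than computable) nature of the $X_\ell$ does no harm---is that an existential formula can never demand a \emph{special} $y$ (one all of whose $g$-preimages satisfy $R$), since that is a genuinely $\Pi_1$ condition; consequently the decision procedure never needs to determine whether $x \in X_\ell$. This observation, which confines the influence of the sets $X_\ell$ to the $\Sigma_2$ level witnessed by Lemma \ref{lem:ea-def}, is what makes the conjunctive query reduce cleanly to a decidable $\Sigma_1$ question about $\mc{A}$.
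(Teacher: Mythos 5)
Your overall strategy---reduce to conjunctive existential queries, enumerate finite configurations of witnesses, and exploit the genericity of the extension so that a $\Sigma_1$ formula can never \emph{force} a special element---is the same core idea as the paper's proof, which organizes it as an inductive quantifier elimination rather than a pattern enumeration. But there is a genuine gap in your treatment of the consistency condition (ii): it is \emph{not} independent of $X$, and genericity alone does not decide it. The problem is not formulas that demand a special $y$ (you are right that this cannot be expressed existentially); it is formulas whose truth depends on whether a \emph{parameter} is special. Concretely, take $x \in X_\ell$ and let $p \in f^{-1}(x) \cap U^\ell$ be a special element (all of whose $g$-preimages satisfy $R$) of the copy built in Lemma \ref{lem:lab-comp}; then
\[ \mc{A}^X \models \neg (\exists z)\, \left[\, g(z) = p \wedge \neg R(z) \,\right], \]
whereas the same formula with a generic parameter $p'$ from the same fibre in place of $p$ is true. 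A pattern that identifies the $g$-image of the witness $z$ with the parameter $p$ and assigns $z$ the $R$-value ``false'' is combinatorially consistent and has a trivial $\mc{A}$-part, so your procedure as described would declare the displayed formula true. Either your procedure is wrong, or---if (ii) is read as genuine realizability in the structure---you have not shown (ii) is decidable: deciding it requires determining, for each $\mc{S}_1$-parameter (and each $g$-image of an $\mc{S}_2$-parameter), whether it is generic or special.

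This is exactly why the paper works with the stage-stamped copy from Lemma \ref{lem:lab-comp} and introduces the auxiliary predicate $Q = \{ (x,\ell,s,t) \in \mc{S}_1 : x \notin X_{\ell,\text{at } s} \}$: in that copy, specialness of a \emph{named} element is computable (the element $(x,\ell,s,t)$ is special if and only if $x$ enters $X_\ell$ exactly at stage $s$), even though membership $x \in X_\ell$ is only c.e. The paper's quantifier elimination tracks $Q$ and permits it to occur only \emph{positively}: positive occurrences of $Q$ attached to freshly quantified elements are always satisfiable by generic witnesses (your genericity observation), while occurrences of $Q$ attached to parameters are simply evaluated, computably, at the end. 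Your argument can be repaired along the same lines, since you do start from the Lemma \ref{lem:lab-comp} copy and therefore have the stamps available: add to each pattern a generic/special declaration for the $\mc{S}_1$-elements occurring among the parameters, check that declaration computably against the stamps, and only then apply the genericity argument to the fresh witnesses. Without this, the claim that (ii) is ``independent of $X$'' is false, and it is the one place where the proof actually has to use how the computable copy was built rather than just the abstract isomorphism type of $\mc{A}^X$.
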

\begin{proof}
We can build a 1-decidable copy of $\mc{A}^X$ by putting the 1-decidable copy of $\mc{A}$ in the first sort, and defining the second and third sorts as in the previous lemma. Given a tuple $\bar{a} \in \mc{A}^X$ and an existential formula $(\exists \bar{y}) \varphi(\bar{x},\bar{y})$, we want to decide whether $\mc{A}^X \models (\exists \bar{y}) \varphi(\bar{a},\bar{y})$. First, we may rewrite $\varphi$ in the language where we replace the language of $\mc{A}$ with the predicates
\[ P^{\theta(x_1,\ldots,x_n)} = \{ (a_1,\ldots,a_n) \in \mc{A}^n : \mc{A} \models \theta(a_1,\ldots,a_n) \} \]
where $\theta$ is an existential formula in the language of $\mc{A}$. Next, we may assume that $\varphi$ is a conjunction of atomic formulas.

We will show that $(\exists \bar{y}) \varphi(\bar{x},\bar{y})$ is equivalent, in $\mc{A}^X$, to a quantifier-free formula $\psi(\bar{x})$ in an expanded language with the predicate
\[Q = \{ (x,\ell,s,t) \in \mc{S}_1 : x \notin X_{\ell,\text{at} s} \} \]
which is only allowed to appear positively.
Note that the predicates $Q$ and $P^\theta$ are computable in $\mc{A}^X$, and so we can decide whether $\mc{A}^X \models \psi(\bar{a})$, and hence whether $\mc{A}^X \models (\exists \bar{y}) \varphi(\bar{a},\bar{y})$.

Arguing by induction, it suffices to show that if $\bar{a}$ is a tuple from $\mc{A}^X$, and $\varphi(x_1,\ldots,x_n)$ is a quantifier-free formula in which $Q$ appears only positively, then $(\exists x_n) \varphi(x_1,\ldots,x_n)$ is equivalent in $\mc{A}^X$ to a formula $\psi(x_1,\ldots,x_{n-1})$ in which $Q$ appears only positively.

Since every element of $\mc{A}$ is the image of an element of $\mc{S}_1$ under $g$, and every element of $\mc{S}_1$ is the image of an element of $\mc{S}_2$ under $f$, we may assume that $x_1,\ldots,x_n$ are from the sort $\mc{S}_2$. We may write $\varphi(x_1,\ldots,x_n)$ in the following form:
\begin{align*}
 P^{\theta(y_1,\ldots,y_n)}(f(g(x_1)),\ldots,f(g(x_n))) \wedge \left[ \bigwedge_{i \in I(Q)} Q(g(x_i)) \right] \\
 \wedge \left[ \bigwedge_{i \in I(U_\ell)} U^\ell(g(x_i)) \right] \wedge \left[ \bigwedge_{i \in I(\neg U^\ell)} \neg U^\ell(g(x_i)) \right] \wedge \left[ \bigwedge_{i \in I(R)} R(x_i) \right] \wedge \left[ \bigwedge_{i \in I(\neg R)} \neg R(x_i) \right] \\
\wedge \left[ \bigwedge_{\{i,j\} \in J_1^{=}} x_i = x_j \right] \wedge \left[ \bigwedge_{\{i,j\} \in J_1^{\neq}} x_i \neq x_j \right] \\ \wedge \left[ \bigwedge_{\{i,j\} \in J_2^{=}} g(x_i) = g(x_j) \right] \wedge \left[ \bigwedge_{\{i,j\} \in J_2^{\neq}} g(x_i) \neq g(x_j) \right] \\
\wedge \left[ \bigwedge_{\{i,j\} \in J_3^{=}} f(g(x_i)) = f(g(x_j)) \right] \wedge \left[ \bigwedge_{\{i,j\} \in J_3^{\neq}} f(g(x_i)) \neq f(g(x_j)) \right].
\end{align*}
So that we can refer to it later, let $\chi(x_1,\ldots,x_n)$ be the part of this formula after $P^\theta(f(g(x_1)),\ldots,f(g(x_n)))$. We may assume that $\varphi$ is looks consistent in the sense that $I(U^\ell)$ and $I(\neg U^\ell)$ are disjoint, $I(R)$ and $I(\neg R)$ are disjoint, and so on.

\smallskip{}

\noindent \textbf{Case 1.} If $\{n,i\} \in J_1^=$ for some $i$, then $(\exists x_n) \varphi(x_1,\ldots,x_n)$ is clearly equivalent to $\varphi(x_1,\ldots,x_{n-1},x_i)$.

\smallskip{}

\noindent \textbf{Case 2.} Otherwise, if $\{n,i\} \in J_2^=$ for some $i$, then $(\exists x_n) \varphi(x_1,\ldots,x_n)$ is equivalent to
\begin{align*}
 P^\theta(f(g(x_1)),\ldots,f(g(x_{n-1})),f(g(x_i))) \wedge Q(g(x_i)) \wedge \chi'(x_1,\ldots,x_{n-1})
\end{align*}
if $n \in I(\neg R)$, and 
\begin{align*}
 P^{\theta(y_1,\ldots,y_n)}(f(g(x_1)),\ldots,f(g(x_{n-1})),f(g(x_i)))  \wedge \chi'(x_1,\ldots,x_{n-1})
\end{align*}
otherwise, where $\chi'(x_1,\ldots,x_{n-1})$ is $\chi(x_1,\ldots,x_n)$ with $g(x_n)$ replaced by $g(x_i)$ everywhere, and any term involving only $x_n$ (but not $g(x_n)$, or $f(g(x_n))$) deleted.

\smallskip{}

\noindent \textbf{Case 3.} Otherwise, if $\{n,i\} \in J_3^=$ for some $i$, then $(\exists x_n) \varphi(x_1,\ldots,x_n)$ is equivalent to
\begin{align*}
 P^{\theta(y_1,\ldots,y_n)}(f(g(x_1)),\ldots,f(g(x_{n-1})),f(g(x_i))) \wedge \chi'(x_1,\ldots,x_{n-1})
\end{align*}
where $\chi'(x_1,\ldots,x_{n-1})$ is $\chi(x_1,\ldots,x_n)$ with $f(g(x_n))$ replaced by $f(g(x_i))$ everywhere, and any term involving only $x_n$ or $g(x_n)$ (but not $f(g(x_n))$) deleted.
\smallskip{}

\noindent \textbf{Case 4.} Otherwise, $(\exists x_n) \varphi(x_1,\ldots,x_n)$ is equivalent to
\begin{align*}
 P^{(\exists y_n) \theta(y_1,\ldots,y_n)}(f(g(x_1)),\ldots,f(g(x_{n-1}))) \wedge \chi'(x_1,\ldots,x_{n-1})
\end{align*}
where $\chi'(x_1,\ldots,x_{n-1})$ is $\chi(x_1,\ldots,x_n)$ with any term involving $x_n$, $g(x_n)$, or $f(g(x_n))$ deleted.
\end{proof}

\subsection{Overview of the construction}

Recall that given a structure $\mc{D}$, we want to build $\mc{A}$ so that if $n \in S$, then $\mc{A}$ will have a $1$-decidable presentation (which we can construct uniformly), and if $n \notin S$, then $\mc{A}$ is not isomorphic to $\mc{D}$.

The structure $\mc{A}$ will actually be of the form $\mc{B}^X$ for some sequence of subsets $X = (X_\ell)_{\ell \in \mc{L}}$ of $\mc{B}$. We will build the diagram of $\mc{B}$ in a computable way while also enumerating the sets $X_\ell$. (Though rather than saying that we put an element $x$ into $X_\ell$, we will say that we put the label $\ell$ on $x$.) By Lemma \ref{lem:lab-comp}, $\mc{A} = \mc{B}^X$ will be a computable structure. At the end, to see that if $n \in S$ then $\mc{A}$ has a 1-decidable presentation, we will use Lemma \ref{lem:lab-1dec}. If $\mc{D}$ is going to be isomorphic to $\mc{A} = \mc{B}^X$, then it will have to be of the form $\mc{E}^Y$ for a sequence of subsets $Y = (Y_\ell)_{\ell \in \mc{L}}$ of $\mc{E}$; by Lemma \ref{lem:ea-def}, using the 1-diagram of $\mc{D}$, we can compute $\mc{E}$ and enumerate the sets $Y_\ell$. So we will diagonalize against the 1-diagram of $\mc{E}$ together with an enumeration of the sequence $Y$. To keep the construction as intuitive as possible, we will not mention $\mc{B}$ and $\mc{E}$. Instead, we will think of $\mc{A}$ and $\mc{D}$ as computable structures with c.e.\ labels.

We will now describe the language and general form of $\mc{A}$. There will be a set $N$ of \textit{nodes}. To each node $\nu$, we attach two other structures: a structure in the language of Lemma \ref{lem:seq} with domain $T_\nu$ and a linear order with domain $W_\nu$. $T_\nu$ will be isomorphic to either $\mc{C}_n$ or $\mc{C}_\infty$, and $W_\nu$ will be isomorphic to one of $\omega$, $\omega^*$, or $\omega^* + \omega$. We call $T_\nu$ the \textit{tag} of $\nu$, and we say that the elements of $T_\nu$ are the \textit{$T$-elements} of $\nu$. To each node $\nu$, we associate the structure consisting of $T_\nu$ and $W_\nu$. We call this structure the \textit{$\nu$-component} of $\mc{A}$.

Note that if $n \notin S$, and one node $\nu$ is tagged with $\mc{C}_n$, and a second node $\nu'$ is tagged with $\mc{C}_{\infty}$, then there is no automorphism of $\mc{A}$ taking $\nu$ to $\nu'$, as $\mc{C}_n$ and $\mc{C}_{\infty}$ are not isomorphic. On the other hand, if $n \in S$, then the $\nu$-component and the $\nu'$-component might be isomorphic. The linear orders $W_\nu$ will be used to diagonalize against 1-presentations; in a 1-presentation, a maximal (or minimal) element of a linear order will be distinguished by a universal formula, while in a computable presentation we can always change our mind between building a copy of $\omega$ or $\omega^*$.

To the nodes $\nu$, and to the $T$-elements, we attach \textit{labels} which are $\Sigma^0_1$ over the 1-diagram in the sense described in Section \ref{sec:labels}. We have infinitely many labels $\ell_k$ and a distinguished label $L$. These labels will be used in the same way that labels are used to build computably categorical structures (\cite{DKLLMT}) or structures of finite computable dimension (\cite{Goncharov80b}), and we suggest that it might help the reader who is not familiar with this technique read one of these papers before proceeding. At each stage $s$, each node $\nu$ which is of the form $\rho$ or $\sigma_i^\Box$ (these will be defined later), and each of their $T$-elements, will have two labels $\ell_k$ which are unique to them; one label will be the \textit{primary label} and the other the \textit{secondary label}. There will be other labels in the \textit{bag} which hold of every element. The bag will begin empty. The nodes $\tau_{i,s}^{\Box}$, and their $T$-elements, will all be labeled in the same was as $\rho$ was at stage $s$ (except that they may also be labeled with $L$). The nodes $\rho$ and $\sigma_i^\Box$, and their $T$-elements, will never be labeled $L$.

While it looks like $\mc{D}$ is copying $\mc{A}$, we will periodically add the primary labels of each element to the bag, labeling every element with them, and then give each element a new unique label. What were the secondary labels will become the new primary labels, and the new labels will be the new secondary labels. If infinitely often we add the primary labels to the bag then at the end of the construction every element will be labeled with the same labels---those in the bag. But at every finite stage of the construction, every element will be distinguished.

\subsection{\texorpdfstring{The construction of $\mc{A}$}{The construction of A}}

We begin at stage $s = 0$. To start, put into $\mc{A}$ the distinguished node $\rho$, and the other nodes $(\sigma_i^{\leftmapsto})$, $(\sigma_i^{\mapsto})$, and $(\sigma_i^{\leftrightarrow})$, and $(\tau_{i,0}^{\leftmapsto})$ and $(\tau_{i,0}^{\mapsto})$. At later stages of the construction, we will add new nodes $(\tau_{i,s}^{\leftmapsto})$ and $(\tau_{i,s}^{\mapsto})$ for other values of $s$.

For the node $\rho$: Let $T_\rho$ contain a copy of $\mc{C}_n$, and let $W_\rho$ begin with a single element.
For each node $\sigma_i^{\Box}$: Let $T_{\sigma_i^{\Box}}$ contain a copy of $\mc{C}_{\infty}$, and let $W_{\sigma_i^{\Box}}$ contain a linear order which depends on $\Box$: for $\Box = \leftmapsto$, set $W_{\sigma_i^{\leftmapsto}} = \omega^*$; for $\Box = \mapsto$, set $W_{\sigma_i^{\mapsto}} = \omega$; and for $\Box = \leftrightarrow$, set $W_{\sigma_i^{\leftrightarrow}} = \omega^* + \omega$.
The nodes $\tau_{i,0}^{\Box}$ will be the same as the nodes $\sigma_i^{\Box}$, except that $T_{\tau_{i,0}^{\Box}}$ will contain a copy of $\mc{C}_n$ instead of $\mc{C}_{\infty}$.
For every node $\nu$ other than $\rho$, the $\nu$-component of $\mc{A}$ will be a 1-decidable structure. (Note also that there are no relations that hold between different components.) Indeed, as soon as we add a node $\nu$ (other than $\rho$) to the domain, we will immediately completely decide $T_\nu$ and $W_\nu$. Later, we may add labels to the elements, but since the labels are $\Sigma^0_1$ over the 1-diagram, this is 1-decidable.

\medskip{}

\begin{tikzpicture}[node distance=2.5cm,auto,>=latex']
    \node[circle,draw=black,minimum width=.5cm, minimum height=.5cm] (rho) at (0,0) {$\rho$};
		\node[rectangle,draw=black,minimum width=1.25cm, minimum height=1.25cm] (rho-c) at (0,1.5) {$\mc{C}_n$};
		\node[align=center,rectangle,draw=black,minimum width=1.25cm, minimum height=1.25cm] (rho-w) at (0,-1.5) {$\omega$, $\omega^*$, or\\$\omega^*+\omega$};
    \draw[-] (rho) -- (rho-c);
		\draw[-] (rho) -- (rho-w);
		
		\node[circle,draw=black,minimum width=.5cm, minimum height=.5cm] (sigmal) at (2,0) {$\sigma^{\leftmapsto}_i$};
		\node[rectangle,draw=black,minimum width=1.25cm, minimum height=1.25cm] (sigmal-c) at (2,1.5) {$\mc{C}_\infty$};
		\node[rectangle,draw=black,minimum width=1.25cm, minimum height=1.25cm] (sigmal-w) at (2,-1.5) {$\omega^*$};
    \draw[-] (sigmal) -- (sigmal-c);
		\draw[-] (sigmal) -- (sigmal-w);
		
		\node[circle,draw=black,minimum width=.5cm, minimum height=.5cm] (sigmar) at (4,0) {$\sigma^{\mapsto}_i$};
		\node[rectangle,draw=black,minimum width=1.25cm, minimum height=1.25cm] (sigmar-c) at (4,1.5) {$\mc{C}_\infty$};
		\node[rectangle,draw=black,minimum width=1.25cm, minimum height=1.25cm] (sigmar-w) at (4,-1.5) {$\omega$};
    \draw[-] (sigmar) -- (sigmar-c);
		\draw[-] (sigmar) -- (sigmar-w);
		
		\node[circle,draw=black,minimum width=.5cm, minimum height=.5cm] (sigmarl) at (6,0) {$\sigma^{\leftrightarrow}_i$};
		\node[rectangle,draw=black,minimum width=1.25cm, minimum height=1.25cm] (sigmarl-c) at (6,1.5) {$\mc{C}_\infty$};
		\node[rectangle,draw=black,minimum width=1.25cm, minimum height=1.25cm] (sigmarl-w) at (6,-1.5) {$\omega^*+\omega$};
    \draw[-] (sigmarl) -- (sigmarl-c);
		\draw[-] (sigmarl) -- (sigmarl-w);
		
		\node[circle,draw=black,minimum width=.5cm, minimum height=.5cm] (taul) at (8,0) {$\tau^{\leftmapsto}_{i,s}$};
		\node[rectangle,draw=black,minimum width=1.25cm, minimum height=1.25cm] (taul-c) at (8,1.5) {$\mc{C}_n$};
		\node[rectangle,draw=black,minimum width=1.25cm, minimum height=1.25cm] (taul-w) at (8,-1.5) {$\omega^*$};
    \draw[-] (taul) -- (taul-c);
		\draw[-] (taul) -- (taul-w);
		
		\node[circle,draw=black,minimum width=.5cm, minimum height=.5cm] (taur) at (10,0) {$\tau^{\mapsto}_{i,s}$};
		\node[rectangle,draw=black,minimum width=1.25cm, minimum height=1.25cm] (taur-c) at (10,1.5) {$\mc{C}_n$};
		\node[rectangle,draw=black,minimum width=1.25cm, minimum height=1.25cm] (taur-w) at (10,-1.5) {$\omega$};
    \draw[-] (taur) -- (taur-c);
		\draw[-] (taur) -- (taur-w);
\end{tikzpicture}

\medskip{}

Assign, to each of the nodes $\rho$ and $\sigma_i^{\Box}$, and to each of their $T$-elements, two unique labels $\ell_k$. Label the $\tau_{i,0}^{\Box}$ in the same way as $\rho$. It will always be true at each stage $s$ that every node $\rho$ and $\sigma_i^{\Box}$ and each of their $T$-elements will have two unique labels that distinguish them from every other such element. No nodes will be labeled by $L$ at this point. The bag begins empty.

Certain stages will be \textit{expansionary stages}. The expansionary stages are those where we get more evidence that $\mc{A}$ is isomorphic to $\mc{D}$. The stage $0$ is an expansionary stage by definition. At each expansionary stage $s$, we will have a number $\extent(s)$ which measures how much of the structures $\mc{A}$ and $\mc{D}$ we are looking at. Begin with $\extent(0) = 0$.

At each stage $s$, we will have a \textit{target}, $\target(s)$, for $\rho$. The target is a node of $\mc{D}$ which we think is the image, under isomorphism, of $\rho$. We will try to make $W_\rho$ different from the target. We do this by choosing a \textit{direction}, $\direction(s)$, for $\rho$ at stage $s$, which is either \textit{left} or \textit{right}. If the direction is left, then we are trying to build $W_\rho$ to be a copy of $\omega^*$; if it is right, then we are trying to build a copy of $\omega$. We will update the target and direction only at expansionary stages. At every stage, expansionary or not, we will add a single element to $W_\rho$ depending on the direction at that stage. Thus $W_\rho$ will end up being isomorphic to $\omega$, $\omega^*$, or $\omega^* + \omega$.

\medskip{}

The general idea of the construction is as follows, when $\mc{D}$ is a total 1-decidable structure, in each of the two cases $n \in S$ and $n \notin S$. If $n \notin S$, then $\mc{C}_n$ and $\mc{C}_{\infty}$ are not isomorphic. So the node $\rho$ is fixed by every automorphism of $\mc{A}$. If we can identify the image of $\rho$ in $\mc{D}$, and have it be our target for all sufficiently large stages, then we will diagonalize against $\mc{D}$ by making $W_\rho$ different from the target in $\mc{D}$. Of course, the only thing distinguishing $\rho$ from the $\sigma_i^{\Box}$ is that it is tagged with $\mc{C}_n$ instead of $\mc{C}_{\infty}$, and these two structures may look very similar. This is where we use the labels: In $\mc{A}$, we give $\rho$ a label that distinguishes it from all of the other nodes, and so $\mc{D}$ must produce a node which looks similar; we use this node as the target. Then, if $\mc{D}$ copies the labels we put on $\mc{A}$, we can force it to also tag the target node with $\mc{C}_n$, making our diagonalization successful. Of course, in the limit, everything ends up with the same labels; and the $r_{i,s}^{\Box}$ are labeled $L$, so that they can be distinguished from $\rho$.

If $n \in S$, then $\mc{C}_n$ and $\mc{C}_{\infty}$ are isomorphic. First, if there are infinitely many expansionary stages, then all of the nodes and $T$-elements end up tagged the same. If $W_\rho$ is isomorphic to $\omega$, then the $\rho$-component is isomorphic to each $\sigma_i^{\mapsto}$-component; so we could have built a copy of $\mc{A}$ without ever having built the $\rho$-component! The $\sigma_i^{\mapsto}$-components are actually 1-decidable, since we decide everything about them (except the labels, which are $\Sigma^0_1$ over the 1-diagram) as soon as we add them to the structure. Thus we can build a 1-decidable copy of $\mc{A}$. The same argument works if $W_\rho$ is isomorphic to $\omega^*$ or to $\omega^* + \omega$. Unfortunately, if there are only finitely many expansionary stages, then the nodes and $T$-elements may end up having different labels. But in this case, after the last expansionary stage $s$, we never add any more labels, and so the $\rho$-component will be isomorphic to each $\tau_{i,s}^{\mapsto}$- or $\tau_{i,s}^{\leftmapsto}$-component, and again we could have built a 1-decidable copy of $\mc{A}$ by not building the $\rho$-component.

\medskip{}

\noindent \textit{Construction at stage $s$.} At stage $s$, so far we have built $\mc{A}[s-1]$. The first thing we do at stage $s$ is to decide whether the stage $s$ is expansionary. Let $s^*$ be the last expansionary stage. Stage $s$ is expansionary if there are:
\begin{enumerate}
	\item nodes $\nu_0,\ldots,\nu_r$ of $\mc{A}[s-1]$, containing among them the first $\extent(s^*)$ nodes of $\mc{A}[s-1]$;
	\item $T$-elements $\bar{a}_0 \in T_{\nu_0},\ldots,\bar{a}_r \in T_{\nu_r}$, containing among them the first $\extent(s^*)$ elements of each of these components;
	\item nodes $\mu_0,\ldots,\mu_r$ of $\mc{B}[s]$, containing among them the first $\extent(s^*)$ nodes of $\mc{D}[s]$; and
	\item $T$-elements $\bar{d}_0 \in T_{\mu_0},\ldots,\bar{d}_r \in T_{\mu_r}$, containing among them the first $\extent(s^*)$ elements of each of these components
\end{enumerate}
such that
\begin{itemize}
	\item the atomic types of $\nu_0,\ldots,\nu_r;\bar{a}_0,\ldots,\bar{a}_r$ in $\mc{A}[s-1]$ and $\mu_0,\ldots,\mu_r;\bar{d}_0,\ldots,\bar{d}_r$ in $\mc{D}[s]$ are the same, and
	\item each of the elements from $\nu_0,\ldots,\nu_r;\bar{a}_0,\ldots,\bar{a}_r$ has the same labels in $\mc{A}[s-1]$ as the corresponding elements from $\mu_0,\ldots,\mu_r;\bar{d}_0,\ldots,\bar{d}_r$ have in $\mc{D}[s]$.
\end{itemize}
Otherwise, stage $s$ is not expansionary. If stage $s$ is expansionary, let $\extent(s) \geq \extent(s^*) + 1$ be large enough that $\nu_0,\ldots,\nu_r$ are among the first $\extent(s)$ nodes of $\mc{A}$, $\bar{a}_0,\ldots,\bar{a}_r$ are among the first $\extent(s)$ elements of their components, $\mu_0,\ldots,\mu_r$ are among the first $\extent(s)$ nodes of $\mc{D}$, and $\bar{d}_0,\ldots,\bar{d}_r$ are among the first $\extent(s)$ elements of their components.

If stage $s$ is expansionary, then continue by \textit{updating the target} followed by \textit{renewing labels} as described below. If the stage $s$ is not expansionary, the target and direction are the same as they were at the last expansionary stage. At all stages, expansionary or not, we finish by adding a new element to $W_\rho$. If $\direction(s) = \RIGHT$, add the new element to the right of all existing elements. Otherwise, if $\direction(s) = \LEFT$, add the new element to the left of the existing ones. In this way we obtain the structure $\mc{A}[s]$.

\medskip{}

\noindent \textit{Updating the target.} In $\mc{D}[s]$, find the least node, if one exists, which is labeled exactly by the labels of $\rho$ (and so not by $L$). Set $\target(s)$ to be this node. (If no such element exists, $\target(s)$ is undefined and $\direction(s) = \RIGHT$.)

Now, look at the linear order $W_{\target(s)}$. If it has a greatest element (i.e., an element which the 1-diagram of $\mc{D}[s]$ says is the greatest element), set $\direction(s) = \RIGHT$. Otherwise, set $\direction(s) = \LEFT$.

\medskip{}

\noindent \textit{Renewing labels.} Recall that $s^*$ was the previous expansionary stage. First, apply the label $L$ to each node $\tau_{i,s^*}$. Second, each of the nodes $\rho$ and $\sigma_i^{\Box}$ and their $T$-elements have two labels which only of themselves and which are not in the bag. Add each of the primary labels to the bag. The secondary labels becomes the primary labels. Then, label each of these elements with each label from the bag along with a new unique secondary label.

Build new nodes $\tau_{i,s}^{\leftmapsto}$ and $\tau_{i,s}^{\mapsto}$ tagged with copies of $\mc{C}_n$. Attach a copy of $\omega^*$ or $\omega$ to each of these nodes respectively.
Label these nodes and their $T$-elements in the same way that $\rho$ and its $T$-elements are currently labeled.

\subsection{The verification}

\begin{lemma}
$W_\rho$ is isomorphic to either $\omega$, $\omega^*$, or $\omega^* + \omega$. These three cases correspond, respectively, to having $\direction(s) = \RIGHT$ for all but finitely many $s$, $\direction(s) = \LEFT$ for all but finitely many $s$, and $\direction(s) = \RIGHT$ and $\direction(s) = \LEFT$ for infinitely many $s$ each.
\end{lemma}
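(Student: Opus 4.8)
The plan is to track $W_\rho$ stage by stage and reconstruct its final order type from the bookkeeping of directions. Since $W_\rho$ begins with a single element $x_0$ and exactly one element is appended at each subsequent stage---placed to the far left when $\direction(s) = \LEFT$ and to the far right when $\direction(s) = \RIGHT$---the order $W_\rho$ is the increasing union of finite linear orders, and it suffices to pin down the relative position of any two added elements.

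First I would establish the key invariant by induction on stages. Write $\ell_1 < \ell_2 < \cdots$ for the stages with $\direction = \LEFT$ and $r_1 < r_2 < \cdots$ for those with $\direction = \RIGHT$, and let $a_i$ (resp.\ $b_j$) denote the element added at stage $\ell_i$ (resp.\ $r_j$). I claim that in $W_\rho$ we have
\[ \cdots < a_3 < a_2 < a_1 < x_0 < b_1 < b_2 < b_3 < \cdots. \]
The point is that each $a_i$ is inserted below everything present when it arrives, and every later left insertion lands below it, so the $a_i$ descend with their index and each lies below $x_0$; dually the $b_j$ ascend and lie above $x_0$; and for any $i,j$, whichever of $a_i, b_j$ is added second is placed on the correct side of the other, so $a_i < b_j$. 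This identifies the order type of $W_\rho$ as (the reverse of the $a_i$-chain) $+\, 1\, +$ (the $b_j$-chain).

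With the invariant in hand, the trichotomy follows from the absorption identities for finite prefixes and suffixes. If $\direction(s) = \RIGHT$ for all but finitely many $s$, there are only finitely many $a_i$ and infinitely many $b_j$, so $W_\rho$ has type (finite) $+\,1\,+\,\omega \cong \omega$. If $\direction(s) = \LEFT$ cofinitely, symmetrically $W_\rho$ has type $\omega^* +\, 1\, +$ (finite) $\cong \omega^*$. If both directions occur infinitely often, $W_\rho$ has type $\omega^* +\, 1\, +\, \omega = \omega^* + \omega$, using $1 + \omega = \omega$. Since $\direction(s)$ is defined (either $\LEFT$ or $\RIGHT$) at every stage---it is set explicitly when the stage is expansionary and inherited from the previous expansionary stage otherwise---these three cases are exhaustive and mutually exclusive for the infinite sequence of directions, giving exactly the stated correspondence.

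I expect no serious obstacle; the only thing requiring care is the inductive verification that insertions really do respect the claimed global order, in particular that $a_i < b_j$ regardless of which element was added first, since ``far left'' and ``far right'' are only defined relative to the current finite stage of $W_\rho$. Once that invariant is confirmed, the computation of the order type is purely formal.
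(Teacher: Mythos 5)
Your proof is correct and matches the paper's approach: the paper's own proof is the one-line observation that at each stage a single element is added to $W_\rho$ on the left or right according to $\direction(s)$, which is exactly the invariant you verify in detail. Your elaboration of the interleaving order $\cdots < a_2 < a_1 < x_0 < b_1 < b_2 < \cdots$ and the resulting case analysis is just a careful write-up of what the paper leaves implicit.
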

\begin{proof}
At each stage $s$ we add a single element to $W_\rho$ on either the left or right hand side, depending on the direction.
\end{proof}

Note that the direction can only change at an expansionary stage, so that if there are only finitely many expansionary stages, $W_\rho$ is isomorphic to either $\omega$ or $\omega^*$. This is why we only add nodes $\tau_{i,s}^{\mapsto}$ and $\tau_{i,s}^{\leftmapsto}$, but not $\tau_{i,s}^{\leftrightarrow}$.

\begin{lemma}\label{lem:inf-exp}
If $\mc{A}$ is isomorphic to $\mc{D}$, then there are infinitely many expansionary stages.
\end{lemma}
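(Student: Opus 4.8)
The plan is to argue by the contrapositive: assuming $\mc{A} \cong \mc{D}$, I would show that the expansionary condition is met infinitely often. Suppose instead, for contradiction, that there is a last expansionary stage $s^*$, and write $k = \extent(s^*)$. The crucial structural observation is that the labels on $\mc{A}$ are renewed \emph{only} at expansionary stages (and likewise no new nodes $\tau^\Box_{i,s}$ are added after $s^*$). Hence after stage $s^*$ the labels on $\mc{A}$ are frozen: the bag is a fixed finite set, and every node $\rho$, $\sigma_i^\Box$, $\tau_{i,s}^\Box$ and every $T$-element carries a fixed, finite set of labels, all of which have already appeared by stage $s^*$. This finiteness is what makes the argument go through.

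Fix an isomorphism $\phi \colon \mc{A} \to \mc{D}$. I would use $\phi$ to exhibit, at a suitable later stage, a matching witnessing that the stage is expansionary. Take the nodes $\nu_0,\dots,\nu_r$ to be the first $k$ nodes of $\mc{A}$ together with the $\phi$-preimages of the first $k$ nodes of $\mc{D}$, and set $\mu_j = \phi(\nu_j)$; choose the $T$-elements $\bar a_j$ to contain the first $k$ elements of each component $T_{\nu_j}$ together with the $\phi$-preimages of the first $k$ elements of each $T_{\mu_j}$, and set $\bar d_j = \phi(\bar a_j)$. By this choice the matched tuples contain the first $k$ nodes and first $k$ $T$-elements of each relevant component on both the $\mc{A}$-side and the $\mc{D}$-side, so conditions (1)--(4) in the definition of an expansionary stage are met once these finitely many elements have appeared. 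Since $\phi$ is an isomorphism, the matched tuples have the same atomic types. Moreover, since each label set is $\exists\forall$-definable uniformly and independently of the structure (Lemma \ref{lem:ea-def}), $\phi$ preserves labels: $x$ carries label $\ell$ in $\mc{A}$ if and only if $\phi(x)$ carries label $\ell$ in $\mc{D}$. Thus in the limit the matched elements carry exactly the same labels.

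It remains to find a single finite stage $s > s^*$ at which the finite-stage label sets agree exactly, which is the only genuinely delicate point. Here I would use that labels are enumerated monotonically (never removed) and that, on the $\mc{A}$-side, they are frozen after $s^*$. For each of the finitely many matched elements $x$ on the $\mc{A}$-side, its label set is the fixed finite set already present at $s^*$; its image $\phi(x)$ has exactly this same finite label set in the limit in $\mc{D}$, so there is a finite stage by which $\mc{D}$ has enumerated all of these labels on $\phi(x)$ and, since the limit sets agree, no others. Taking $s$ larger than $s^*$, larger than all of these finitely many stages, and large enough that the matched nodes and elements of $\mc{D}$ have appeared in $\mc{D}[s]$ and that the first $k$ nodes and elements of both $\mc{A}$ and $\mc{D}$ have reached their final values, the labels recorded in $\mc{A}[s-1]$ and in $\mc{D}[s]$ on the matched elements coincide. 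Then stage $s$ satisfies every clause of the definition and is therefore expansionary, contradicting the maximality of $s^*$.

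The main obstacle, as indicated, is the finite-stage label bookkeeping: an isomorphism only guarantees that labels agree in the limit, whereas the expansionary test compares the labels enumerated by stages $s-1$ and $s$. The resolution is exactly the observation that, in the case of only finitely many expansionary stages, the $\mc{A}$-labels stabilize after $s^*$, leaving only finitely many labels per element to wait for on the $\mc{D}$-side; monotonicity of the two enumerations then lets a single sufficiently large stage catch up on both sides.
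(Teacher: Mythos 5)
Your proof is correct and follows essentially the same route as the paper's: assume a last expansionary stage $s^*$, use the isomorphism to pick the first $\extent(s^*)$ nodes and $T$-elements of $\mc{A}$ together with the preimages of those of $\mc{D}$, and observe that since the $\mc{A}$-side labels freeze after $s^*$ while the $\mc{D}$-side labels are enumerated monotonically toward the same limit, some sufficiently large stage witnesses expansion. Your write-up just makes explicit the ``for sufficiently large $s$ the labels agree'' step that the paper leaves implicit.
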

\begin{proof}
Suppose to the contrary that there is a last expansionary stage $s^*$, and that $\mc{A}$ is isomorphic to $\mc{D}$ at the end of the construction, say by an isomorphism $f$. Then after stage $s^*$, we never add any more nodes into $\mc{A}$, and we never add any new labels to any elements. Let $\mu_0,\ldots,\mu_r$ be the first $\extent(s^*)$ nodes of $\mc{A}$ together with the inverse images, under $f$, of the first $\extent(s^*)$ nodes of $\mc{D}$. Let $\bar{a}_0 \in T_{\mu_0},\ldots,\bar{a}_r \in T_{\mu_r}$ be the first $\extent(s^*)$ elements of these components, together with the inverse images, under $f$, of the first $\extent(s^*)$ elements of $T_{f(\mu_0)},\ldots,T_{f(\mu_r)}$. Then, for sufficiently large $s$, $\mu_0,\ldots,\mu_r;\bar{a}_0,\ldots,\bar{a}_r$ and $f(\mu_0),\ldots,f(\mu_r);f(\bar{a}_0),\ldots,f(\bar{a}_r)$ have the same labels in $\mc{A}[s-1]$ and $\mc{D}[s]$ respectively. Such a stage $s$ is expansionary.
\end{proof}

\begin{lemma}\label{lem:label}
If there are infinitely many expansionary stages, then every node $\rho$ or $\sigma_i^{\Box}$ and their $T$-elements have exactly the same labels. Each $\tau_{i,s}^{\Box}$ is labeled by $L$.
\end{lemma}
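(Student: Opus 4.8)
The plan is to pin down a single target label set and show that every relevant element converges to it. Let $B_\infty$ denote the set of all labels that are ever added to the bag over the course of the construction. I claim that, in the limit, every node $\rho$ or $\sigma_i^\Box$ together with each of their $T$-elements carries exactly the labels in $B_\infty$; the assertion about the $\tau_{i,s}^\Box$ will fall out separately from the $L$-labeling step.

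I would dispose of the $\tau$ nodes first, since this is the easy half. Each $\tau_{i,s}^\Box$ is introduced during renewing labels at an expansionary stage $s$ (or, for $s=0$, in the initial setup). Because there are infinitely many expansionary stages, there is a next expansionary stage $s' > s$, at which the previous expansionary stage is $s^\ast = s$; and the first action of renewing labels at $s'$ is to apply $L$ to every $\tau_{i,s^\ast}^\Box = \tau_{i,s}^\Box$. As labels are $\Sigma^0_1$ over the $1$-diagram and are never retracted, $\tau_{i,s}^\Box$ carries $L$ from stage $s'$ onward, which gives the second assertion.

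For the first assertion, I would maintain the invariant that at every stage the labels held by such an element form exactly the union of the current bag with its two current unique labels (the primary and the secondary): between expansionary stages nothing changes, while the renewing-labels step moves the old primary into the bag, promotes the old secondary to primary, assigns a fresh secondary, and relabels the element with the entire current bag. The crux is that every unique label is eventually absorbed into the bag: a label created as a secondary at some expansionary stage is promoted to primary at the next expansionary stage and added to the bag at the expansionary stage after that, and these always exist because there are infinitely many expansionary stages. Hence in the limit the held labels equal $B_\infty$, the same set for every such element, which is exactly the claim. I would also note that $L$ is never applied to $\rho$ or to any $\sigma_i^\Box$, so no primary label of these nodes is ever $L$ and therefore $L \notin B_\infty$, keeping the target set uncontaminated and consistent with these nodes never being labeled $L$.

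The step demanding the most care is this absorption argument: one must confirm that the primary/secondary bookkeeping of renewing labels really advances each freshly minted unique label into the bag after exactly two further expansionary stages, and that the only labels these elements ever acquire are bag labels and their own two current unique labels—so that the containment in $B_\infty$ is tight in both directions. Granting this bookkeeping, both inclusions are routine, since the family of labels is cumulative and each such element is relabeled with the full current bag at every expansionary stage.
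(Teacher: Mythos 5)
Your proof is correct and takes the same route the paper intends: the paper's own proof is the one-line remark that the lemma ``is easily seen from the way the labels are renewed in the construction,'' and your bag-absorption bookkeeping (every unique label becomes primary at the next expansionary stage and enters the bag at the one after, while every bag label is applied to every $\rho$-, $\sigma_i^{\Box}$-, and $T$-element at each expansionary stage, with the $\tau_{i,s}^{\Box}$ receiving $L$ at the following expansionary stage) is exactly the verification being alluded to.
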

\begin{proof}
This lemma is easily seen from the way the labels are renewed in the construction.
\end{proof}

\begin{lemma}
Let $s$ be an expansionary stage and suppose that $a \in \mc{A}$ and $d \in \mc{D}$ are nodes which are among the first $\extent(s)$ nodes of of $\mc{A}$ and $\mc{D}$ respectively (or $T$ elements which are among the first $\extent(s)$ elements of their components, and are associated to nodes which are among the first $\extent(s)$ nodes), and so that $a$ has the same labels in $\mc{A}[s-1]$ as $d$ does in $\mc{D}[s]$.
Then for any expansionary stage $s^* \geq s$, either $a$ and $d$ have the same labels in $\mc{A}[s^* - 1]$ and $\mc{D}[s]$ respectively, or one of them is labeled $L$.
\end{lemma}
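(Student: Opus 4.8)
The plan is to argue by induction on expansionary stages, peeling off one such stage at a time, and two preliminary observations set this up. First, labels are never removed from an element, so the label set of $a$ in $\mc{A}[t]$ and the label set of $d$ in $\mc{D}[t]$ are both nondecreasing in $t$; in particular the label $L$, once present, is permanent, so the clause ``one of them is labeled $L$'' is absorbing, and it suffices to treat the case in which neither $a$ nor $d$ is ever labeled $L$ and to prove genuine equality of their labels at each expansionary stage from $s$ up to $s^*$ (comparing, at each expansionary stage $e$, the labels of $a$ in $\mc{A}[e-1]$ with those of $d$ in $\mc{D}[e]$). Second, between two consecutive expansionary stages no labels change in $\mc{A}$ at all: new labels are applied only during the \emph{renewing labels} step, which runs only at expansionary stages, while the intervening non-expansionary stages merely append an element to $W_\rho$. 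Hence it is enough to analyze a single step, from an expansionary stage $e$ (with $s \le e < s^*$) to the next expansionary stage $e'$.

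First I would dispose of the easy case. If $a$ is a node $\tau_{i,t}^{\Box}$, or one of its $T$-elements, then by the construction it is labeled $L$ at the renewing step of the first expansionary stage after $t$, so we fall into the $L$ clause; by Lemma~\ref{lem:label} these are exactly the elements that can be labeled $L$. Otherwise $a$ is $\rho$ or some $\sigma_i^{\Box}$, or a $T$-element of one of these, and on such elements the renewing step acts uniformly: it moves each active special node's primary label into the bag (so $a$, like every special element, acquires all these primaries), promotes the old secondary to primary, and attaches a single brand-new secondary label, deleting nothing. Thus the label set of $a$ in $\mc{A}[e'-1]$ is exactly its label set in $\mc{A}[e-1]$ together with the newly bagged primaries of the active special nodes and one fresh secondary label.

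The heart of the matter is then to show that $\mc{D}$ exhibits precisely these same additions on $d$ by stage $e'$, and for this I would lean entirely on the fact that $e'$ is expansionary: there is a label- and atomic-type-preserving matching $\Phi$ of a block containing the first $\extent(e)$ nodes and elements of $\mc{A}[e'-1]$ with a corresponding block of $\mc{D}[e']$. Since the promoted primary and the fresh secondary of $a$ are labels unique to $a$ among the special nodes, $\Phi$ must send $a$ to an element of $\mc{D}[e']$ carrying exactly the labels of $a$; combining this with the inductive hypothesis that $a$ and $d$ already agreed at $e$ and with the monotonicity of $d$'s labels, I would argue that $d$ is the only possible such image, so that $\Phi(a)=d$ and $d$ therefore carries in $\mc{D}[e']$ precisely the labels $a$ carries in $\mc{A}[e'-1]$. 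Iterating from $s$ up to $s^*$ gives the lemma. I expect this final identification to be the main obstacle: it is where the freshly minted secondary label produced on the $\mc{A}$ side must be reconciled with the fact that $\mc{D}$'s labels are only enumerated, never controlled by us. The clean way through should be that the uniqueness of the primary and secondary labels among the special nodes forces the defining matching at $e'$ to pair $a$ with $d$; the only alternative—that $\mc{D}$ fails to reproduce a new label on $d$—either destroys expansionary-ness on the relevant block or pushes the matched element into the $L$-labeled class, which is exactly the escape clause of the statement.
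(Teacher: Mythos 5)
Your overall skeleton---induction over consecutive expansionary stages $e < e'$, monotonicity of the labels, treating $L$ as an absorbing escape clause, and appealing to the matching that defines expansionary-ness---is the same as the paper's. But the central step is oriented the wrong way, and as you state it, it fails. You apply the matching $\Phi$ to $a$ and then try to force $\Phi(a) = d$ from the uniqueness of $a$'s promoted primary and fresh secondary labels. That uniqueness holds only inside $\mc{A}$, where we control the labeling; it tells you nothing about $\mc{D}$, which is an adversarial structure whose labels we merely enumerate. Indeed $\mc{D}$ may place $a$'s labels on as many elements as it likes, and when $\mc{D}$ genuinely is a copy of $\mc{A}$ it \emph{must} contain infinitely many elements labeled exactly like $a$: the images of the nodes $\tau_{i,e}^{\Box}$ created at stage $e$, which by construction are labeled identically to $\rho$ at that moment. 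So the matching can legitimately send $a$ to any of these, $\Phi(a)=d$ is not forced, and the main line of your argument collapses. The hedged dichotomy you offer as a fallback (``the only alternative \dots either destroys expansionary-ness \dots or pushes the matched element into the $L$-labeled class'') is precisely the point at issue; it is asserted rather than established.

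The repair is to run the matching in the opposite direction, which is what the paper does. Write $\ell_{k_2}$ for the label that was $a$'s secondary label going into stage $e$ and became its primary at the renewal. Since $d$ is among the first $\extent(s) < \extent(e')$ nodes of $\mc{D}$ (or among the first $\extent(s)$ elements of its component), the expansionary condition at $e'$ guarantees the existence of some $a' \in \mc{A}[e'-1]$ carrying \emph{exactly} the labels that $d$ carries in $\mc{D}[e']$. By the inductive hypothesis and monotonicity of $\mc{D}$'s labels, $d$ still carries $\ell_{k_2}$, hence so does $a'$. Now the uniqueness argument is applied where it is valid, namely inside $\mc{A}$: in $\mc{A}[e'-1]$ the only elements carrying $\ell_{k_2}$ are those labeled exactly as $a$ is (namely $a$ itself and the stage-$e$ batch of $\tau$-nodes, if $a$ is $\rho$ or one of its $T$-elements) and those labeled $L$ (the previous batch of $\tau$-nodes). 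Therefore $d$'s label set equals $a$'s, or $d$ (equivalently $a'$) is labeled $L$, which is exactly the desired disjunction---and nothing ever needs to be said about where $\Phi$ sends $a$.
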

\begin{proof}
It suffices to show that if $s^* \geq s$ is an expansionary stage at which $a$ and $d$ have the same labels in $\mc{A}[s^* - 1]$ and $\mc{D}[s]$ respectively, and $s^{**} > s^*$ is the next expansionary stage, then either $a$ and $d$ have the same labels in $\mc{A}[s^{**}-1]$ and $\mc{D}[s^{**}]$ or one of them is labeled $L$.

Let $\ell_{k_1}$ be the primary label of $a$ in $\mc{A}[s^*-1]$, and let $\ell_{k_2}$ be its secondary label. Then by assumption, $d$ is also labeled by $\ell_{k_1}$ and $\ell_{k_2}$ in $\mc{D}[s^*]$.
During stage $s^*$, $\ell_{k_2}$ becomes the primary label of $a$, and $a$ gets a new secondary label $\ell_{k_3}$.
Now at all stages $t$, $s^* < t < s^{**}$, we do not add any labels to elements of $\mc{A}$.
In $\mc{A}[s^{**}-1]$, the only elements labeled $\ell_{k_2}$ are either labeled the same way as $a$, or labeled $L$.
Since $s^{**}$ is an expansionary stage, and $d$ is among the first $\extent(s) < \extent(s^{**})$ nodes of $\mc{D}$ if it is a node (or the first $\extent(s)$ elements of its component, which is among the first $\extent(s)$ components of $\mc{D}$, if $d$ is a $T$-element), there is an element $a' \in \mc{A}[s^{**}-1]$ which is labeled in the same way as $d$.
As $d$ is labeled $\ell_{k_2}$, $a'$ is labeled $\ell_{k_2}$, and so they must both be labeled in the same way as $a$, or be labeled $L$.
\end{proof}

\begin{lemma}\label{lem:bec-iso}
Suppose that $\mc{A}$ and $\mc{D}$ are in fact isomorphic.
Let $s$ be an expansionary stage, and let $\nu$ and $\mu$ be nodes of $\mc{A}$ and $\mc{D}$ respectively, which are among the first $\extent(s)$ nodes of those structures, and assume that neither are ever labeled $L$.
If, at stage $s$, $\nu$ and $\mu$ are labeled in the same way in $\mc{A}[s-1]$ and $\mc{D}[s]$ respectively, then $T_\nu \subseteq \mc{A}$ and $T_\mu \subseteq \mc{D}$ are isomorphic.
\end{lemma}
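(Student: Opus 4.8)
The plan is to assemble an isomorphism $T_\nu \to T_\mu$ as a coherent limit of the finite label- and atomic-type-preserving matchings furnished by the expansionary stages. First I would record the two facts that make such a limit possible. Since $\mc{A} \cong \mc{D}$, Lemma~\ref{lem:inf-exp} gives infinitely many expansionary stages, and since $\extent$ strictly increases at each one, $\extent(t) \to \infty$; thus every element of $T_\nu$ and of $T_\mu$ eventually lies among the first $\extent(t)$ elements of its component, and that component eventually lies among the first $\extent(t)$ components. Moreover, by the construction invariant, every node $\rho$ or $\sigma_i^{\Box}$ and each of its $T$-elements carries, at each stage, two labels unique to it within $\mc{A}$; as $\nu$ is never labeled $L$ it is such a node, so each $a \in T_\nu$ is never labeled $L$ and is pinned down in $\mc{A}$ by its labels at every stage.

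Next I would define the map. At an expansionary stage $t$ the witnessing matching is a bijection between finite tuples that preserves atomic types and labels in $\mc{A}[t-1]$ and $\mc{D}[t]$; call its restriction $p_t$, viewed as a partial map from $\mc{D}$ to $\mc{A}$. For $d \in T_\mu$, choosing $t$ large enough that $d$ is matched, I set $g(d) := p_t(d)$. Because $d$ is never labeled $L$ (as $\mu$ is not) and $p_t$ preserves labels, $g(d)$ is an element of $\mc{A}$ carrying the same labels as $d$; by uniqueness of labels on the $\mc{A}$-side this element is determined by $d$ alone, independent of the choice of matching at stage $t$. The coherence across stages---that $g(d)$ does not change as $t$ grows---is exactly what the lemma immediately preceding supplies: once $a$ and $d$ are label-matched at an expansionary stage and neither is ever labeled $L$, they remain label-matched at all later expansionary stages, so the unique $\mc{A}$-element carrying $d$'s current labels is always the same $a$. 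Finally I would check that $g(d) \in T_\nu$: the atomic type of $d$ records that $d$ is a $T$-element attached to $\mu$, and since $\mu$ and $\nu$ are label-matched and $\nu$ is the unique node of $\mc{A}$ carrying $\mu$'s labels, $p_t(d)$ is attached to $\nu$.

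It then remains to verify that $g \colon T_\mu \to T_\nu$ is a bijection preserving the atomic relations of the tag language. Injectivity is immediate since each $p_t$ preserves the relation $\neq$: distinct $d,d'$ lie in a common matched tuple once $t$ is large, hence map to distinct elements. Surjectivity is the symmetric statement: given $a \in T_\nu$, for large $t$ the matched $\mc{A}$-tuple contains the first $\extent(t)$ elements of $\mc{A}$, so $a$ is matched to some $d$, and the same node-association argument places $d$ in $T_\mu$. Preservation of the tag relations in both directions is inherited from the atomic-type preservation of the $p_t$. Hence $g^{-1}$ is the desired isomorphism of $T_\nu$ onto $T_\mu$.

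The main obstacle is the coherence of the finite matchings: the labels attached to a fixed element are renewed at every expansionary stage, so ``the element with the same labels as $d$'' is a moving target, and a priori $\mc{D}$ might assign identical labels to several elements. I expect to defuse both difficulties by leaning on the asymmetry between the two sides---uniqueness of labels is guaranteed only on the $\mc{A}$-side, where we control the construction---together with the preceding lemma, which guarantees that a matched non-$L$ pair stays matched. Injectivity and the landing of images in the correct component are then forced by atomic-type preservation rather than by any label property of $\mc{D}$.
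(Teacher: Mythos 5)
Your overall strategy---assembling the isomorphism as a limit of finite label-matchings along expansionary stages, with coherence supplied by the preceding lemma---is the same as the paper's, but your argument rests on a premise that is false \emph{by design}: that the labels of a never-$L$ node and its $T$-elements are ``unique to it within $\mc{A}$.'' The construction's invariant is only that the labels of $\rho$ and the $\sigma_i^{\Box}$ distinguish them \emph{from one another}. The whole point of the $\tau$-nodes is to violate global uniqueness: at every stage, the nodes $\tau_{i,s^*}^{\Box}$ created at the most recent expansionary stage $s^*$, together with their $T$-elements, carry \emph{exactly} the same labels as $\rho$ and its $T$-elements, and they acquire $L$ only at the \emph{next} expansionary stage. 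This breaks your proof in three places: (a) $g(d):=p_t(d)$ is not well-defined, since two witnessing matchings at stage $t$ may send $d$ to label-equivalent elements in different components, say one in $T_\rho$ and one in $T_{\tau_{0,s^*}^{\mapsto}}$; (b) your claim that $g(d)\in T_\nu$ because ``$\nu$ is the unique node of $\mc{A}$ carrying $\mu$'s labels'' fails when $\nu=\rho$---and $\nu=\rho$ is precisely the case invoked in the second case of the proof of Lemma \ref{lem:succ-dia}, so it cannot be discarded; (c) your surjectivity argument has the mirror-image problem: the matching may send $a\in T_\nu$ to an element attached to some node of $\mc{D}$ other than $\mu$ that shares $\mu$'s labels, since nothing forces label-uniqueness in $\mc{D}$ either (as you yourself note).

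The paper's proof shows what is needed to repair this. It defines $f_i$ by label-matching \emph{restricted to} $T_\nu\times T_\mu$: $f_i(a)=d$ iff $a$ and $d$ (both among the first $\extent(s_i)$ elements of their components) carry the same labels. Well-definedness does not use global uniqueness in $\mc{A}$; it uses that the expansionary matching embeds the single component $T_\mu$ into a \emph{single} component of $\mc{A}$, and labels \emph{are} unique within one component. Then, to see that each $f_i$ is a partial isomorphism and that $f$ is total and onto, one cannot assert that the expansionary matching carries $T_\mu$ into $T_\nu$ itself; instead one uses a clone property of the construction: label-equivalent tuples of $\mc{A}$ have the same atomic type, because each $\tau_{i,s}^{\Box}$-component is created as an exact copy of the $\rho$-component at that moment. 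The matching may route $\bar d$ through a clone $T_{\nu'}$ (with $\nu'$ label-equivalent to $\nu$, e.g.\ $\nu=\rho$ and $\nu'=\tau_{0,s_{i-1}}^{\mapsto}$), and one then pulls the image back to $T_\nu$ via the identification of $T_{\nu'}$ with $T_\nu$. This detour through the clones---explicitly flagged in the paper's proof---is exactly the step your argument is missing, and without it the proof does not go through in the case $\nu=\rho$ that the application requires.
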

\begin{proof}
Let $s_0 = s,s_1,s_2,\ldots$ list the expansionary stages after $s$. By the previous lemma, at each expansionary stage $s_i$, $\nu$ and $\mu$ are labeled in the same way in $\mc{A}[s_i-1]$ and $\mc{D}[s_i]$ respectively.

Since $\mc{A}$ and $\mc{D}$ are isomorphic, by Lemma \ref{lem:inf-exp} there are infinitely many expansionary stages.
Given $i$, define a partial isomorphism $f_i \colon T_\nu \to T_\mu$, as follows.
Put a $T$-element $a$, which is among the first $\extent(s_i)$ elements of $T_\nu$, into the domain of $f_i$ if there is $d$ a $T$-element of $\mu$, which is among the first $\extent(s_i)$ elements of $T_\mu$, such that $a$ and $d$ have the same labels in $\mc{A}[s_i-1]$ and $\mc{D}[s]$ respectively.
In this case, set $f_i(a) = d$.
(Note that there can be at most one such $d$ for a given $a$, as no two elements of the same component of $\mc{A}[s_i-1]$ are labeled in the same way.)

\begin{claim}
If $i < i'$, then $f_i \subseteq f_{i'}$.
\end{claim}

Suppose that $f_i(a) = d$. Then $a$ and $d$ are labeled in the same way in $\mc{A}[s_i-1]$ and $\mc{D}[s_i]$ respectively, and are among the first $\extent(s_i)$ elements of $T_\nu$ and $T_\mu$ respectively. Since $\nu$ and $\mu$ are never labeled $L$, neither are $a$ and $d$ at the expansionary stage $s_{i'}$; we will not label $a$ by $L$, and if $d$ was labeled $L$, then $s_{i'}$ could not be an expansionary stage. So by the previous lemma, at the stage $s_{i'}$, $a$ and $d$ are labeled in the same way. Thus we will define $f_{i'}(a) = d$.

\medskip{}

Let $f = \bigcup_{i \in \omega} f_i$.

\begin{claim}
$f$ is one-to-one.
\end{claim}

If $f$ was not one-to-one, then for some $i$, we would have $f_i(a_1) = f_i(a_2) = d$. So then, in $\mc{A}[s_i - 1]$, $a_1$ and $a_2$ are labeled in the same way; but they are both in the same component, and so this cannot happen.

\begin{claim}
$f$ is total and onto.
\end{claim}

To see that $f$ is total, fix $a \in T_\nu$. For some sufficiently large $i$, $a$ will be among the first $\extent(s_i)$ elements of $T_\nu$.
Then, at the next expansionary stage $s_{i+1}$, there will have to be some $\mu';d'$ corresponding (in the sense that they witness that $s_{i+1}$ is a true stage) to $\nu;a$ and $\nu'$ corresponding to $\mu$. Now since $\nu$ and $\mu$ are labeled in the same way, and $\mu$ and $\nu'$ are labeled in the same way, $\nu$ and $\nu'$ are labeled in the same way in $\mc{A}[s_{i+1}-1]$. From the construction, we see that $T_\nu$ and $T_{\nu'}$ are identically either copies of $\mc{C}_n$ or $\mc{C}_\infty$. (The nodes $\nu$ and $\nu'$ might be, for example, $\rho$ and $\tau_{0,s_i}^{\mapsto}$.) Thus there is $a' \in T_{\nu'}$ which corresponds to $a \in T_\nu$, and since $a$ is among the first $\extent(s_i)$ of $T_\nu$, $a'$ is among the first $\extent(s_i)$ elements of $T_{\nu'}$. Also, $\nu'$ is among the first $\extent(s_i)$ nodes of $\mc{A}$.  Thus there is $d \in T_\mu$ which is labeled in the same way as $a'$, which is labeled in the same way as $a$; hence we would set $f_{i+1}(a) = d$.

To see that $f$ is onto, a similar but not identical argument works. Fix $d \in T_\mu$. For some sufficiently large $i$, $a$ will be among the first $\extent(s_i)$ elements of $T_\nu$. Then, at the next expansionary stage $s_{i+1}$, there will have to be some $\nu';a'$ corresponding to $\mu;d$ and $\mu'$ corresponding to $\nu$. Now since $\nu$ and $\mu$ are labeled in the same way, and $\mu$ and $\nu'$ are labeled in the same way, $\nu$ and $\nu'$ are labeled in the same way in $\mc{A}[s_{i+1}-1]$. From the construction, we see that $T_\nu$ and $T_{\nu'}$ are identically either copies of $\mc{C}_n$ or $\mc{C}_\infty$. Thus there is $a \in T_\nu$ which corresponds to $a' \in T_{\nu'}$. Then $d$ is labeled the same way as $a'$, which is labeled in the same way as $a$; hence we would set $f_{i+1}(a) = d$.

\begin{claim}
$f$ is an isomorphism.
\end{claim}

It suffices to show that each $f_i$ is a partial isomorphism. At stage $s_i$, let $a_0,\ldots,a_r$ be the elements in the domain of $f_i$, and let $d_0 = f_i(a_0),\ldots,d_r = f_i(a_r)$. Since $s_i$ is an expansionary stage, there must be elements $a_0',\ldots,a_r'$ of $\mc{A}[s_i-1]$ which are labeled in the same way, and have the same atomic type as $d_0,\ldots,d_r$ in $\mc{D}[s_i]$. But then $a_0',\ldots,a_r'$ are labeled in the same way, in $\mc{A}[s_i-1]$, as $a_0,\ldots,a_r$. We can see from the construction that $a_0,\ldots,a_r$ and $a_0',\ldots,a_r'$ must then have the same atomic type in $\mc{A}[s_i-1]$. (It is possible that $a_0,\ldots,a_r$ are not equal to $a_0',\ldots,a_r'$, for example if the former are in $T_\rho$ and the latter are in $T_{\tau_{0,s_{i-1}}^\mapsto}$.) Hence $f_i$ is a partial isomorphism.

\medskip{}

This finished the proof of the lemma.
\end{proof}

\begin{lemma}\label{lem:succ-dia}
If $n \notin S$, then $\mc{A}$ is not isomorphic to $\mc{D}$.
\end{lemma}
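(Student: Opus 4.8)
The plan is to argue by contradiction. Suppose $n \notin S$ but $\mc{A} \cong \mc{D}$ via an isomorphism $f$; I will show that the construction forces $W_\rho \not\cong W_{f(\rho)}$, contradicting the fact that $f$ restricts to an isomorphism of the $\rho$-component onto the $f(\rho)$-component. By Lemma \ref{lem:inf-exp} there are infinitely many expansionary stages, so Lemma \ref{lem:label} applies: in the limit the nodes $\rho$ and all $\sigma_i^{\Box}$ (and their $T$-elements) carry exactly the same labels and are never labeled $L$, while every $\tau_{i,s}^{\Box}$ is eventually labeled $L$. Thus the nodes of $\mc{D}$ that are never labeled $L$ are precisely $f(\rho)$ and the $f(\sigma_i^{\Box})$. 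Since $n \notin S$ we have $\mc{C}_n \not\cong \mc{C}_\infty$, so $f(\rho)$---whose tag is isomorphic to $T_\rho \cong \mc{C}_n$---is the \emph{unique} never-$L$ node of $\mc{D}$ with tag isomorphic to $\mc{C}_n$.

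The central step is to show that $\target(s)$ is eventually constant. Recall that $\target(s)$ is the least node of $\mc{D}[s]$ whose labels are exactly those currently on $\rho$ (and which is not labeled $L$). I would first note that $\rho$'s label set grows monotonically, acquiring a single fresh secondary label at each expansionary stage, so at every stage $\rho$ carries a label absent from the label set of each $\sigma_i^{\Box}$ at that stage. Let $d_0$ be the fixed, finite index of $f(\rho)$ in $\mc{D}$. Every node of index less than $d_0$ is $f(\nu)$ for a single $\nu \neq \rho$: if $\nu$ is some $\tau_{i,s}^{\Box}$ then $f(\nu)$ is eventually labeled $L$ and hence never good thereafter, and if $\nu = \sigma_i^{\Box}$ then $f(\nu)$ eventually carries the current labels of $\sigma_i^{\Box}$, which differ from those of $\rho$; in either case such a node is good at only finitely many stages. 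Since $f(\rho)$ is good at cofinitely many expansionary stages, for all large expansionary $s$ the least good node of $\mc{D}[s]$ is $f(\rho)$, so $\target(s)$ stabilizes to $\mu^* := f(\rho)$.

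To close the diagonalization, observe that $\mu^*$ is never labeled $L$ and is good at infinitely many expansionary stages, so by Lemma \ref{lem:bec-iso} we have $T_{\mu^*} \cong T_\rho \cong \mc{C}_n$, reconfirming $\mu^* = f(\rho)$. Because $\mc{D}$ is (totally) $1$-decidable, its $1$-diagram correctly decides the $\Pi_1$ property ``$x$ is the greatest element of $W_{\mu^*}$''. If $W_{\mu^*}$ has a greatest element (so $W_{\mu^*} \cong \omega^*$), then from the stage at which this element is confirmed onward $\direction(s) = \RIGHT$, whence $W_\rho \cong \omega$; if $W_{\mu^*}$ has no greatest element (so $W_{\mu^*} \cong \omega$ or $\omega^* + \omega$), then no element is ever confirmed greatest, so eventually $\direction(s) = \LEFT$ and $W_\rho \cong \omega^*$. (A finite initial segment built before stabilization is absorbed, since a finite order followed by $\omega$ is $\omega$, and $\omega^*$ followed by a finite order is $\omega^*$.) In every case $W_\rho \not\cong W_{\mu^*} = W_{f(\rho)}$, the desired contradiction.

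The main obstacle is the stabilization of the target, specifically the claim that $f(\rho)$ is good at \emph{cofinitely} many expansionary stages. Because the labels are only $\Sigma^0_1$ over the $1$-diagram, $\mc{D}$'s enumeration of the labels of $f(\rho)$ need not be synchronized with ours on $\rho$, and a priori the least matching node at a given stage could be the image of a freshly built $\tau$-node that momentarily carries $\rho$'s labels. The resolution rests on the observation that the fresh secondary label placed on $\rho$ at stage $s$ initially appears only on $\rho$ and on the $\tau$-nodes created at stage $s$, whose images are all subsequently labeled $L$; together with the monotonicity of the labels and the $\extent$-bookkeeping that defines the expansionary stages, this forces the stable least match to be $f(\rho)$.
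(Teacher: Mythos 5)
Your proposal follows the same overall strategy as the paper's proof---stabilize $\target(s)$ at $f(\rho)$, then diagonalize with $W_\rho$---but the justification of the stabilization step, which you yourself flag as the main obstacle, has a genuine gap. The problematic claim is that if $\nu = \sigma_i^{\Box}$ then $f(\nu)$ ``eventually carries the current labels of $\sigma_i^{\Box}$, which differ from those of $\rho$,'' so that $f(\nu)$ can be the least good node at only finitely many stages. This conflates the stage-$s$ labels of $\sigma_i^{\Box}$ in $\mc{A}$ with the labels that $\mc{D}$ enumerates on $f(\sigma_i^{\Box})$. By Lemma \ref{lem:label}, once there are infinitely many expansionary stages, the \emph{limit} label sets of $\rho$ and of every $\sigma_i^{\Box}$ coincide (every unique label eventually enters the bag); the stage-wise distinction between $\rho$ and $\sigma_i^{\Box}$ is a fact about the construction of $\mc{A}$ only. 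The labels of $\mc{D}$ are merely c.e.\ over its $1$-diagram and are in no way synchronized with the stages of the construction. In particular, since $\rho$'s current primary and secondary labels eventually enter the bag, they lie in the limit label set of $f(\sigma_i^{\Box})$, so an adversarial enumeration can display \emph{exactly} $\rho$'s current label set on $f(\sigma_i^{\Box})$ at a given stage, and can do so again at infinitely many later expansionary stages, consistently with monotonicity of the enumeration, since $\rho$'s own label set only grows. Your closing ``resolution'' paragraph addresses only the images of $\tau$-nodes (which do eventually get labeled $L$); it says nothing that rules out the image of a $\sigma$-node, which is never labeled $L$, from being the least matching node infinitely often.

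What actually rules this out is Lemma \ref{lem:bec-iso}, which the paper invokes \emph{inside} the stabilization argument, whereas you invoke it only afterwards to ``reconfirm'' the target (where it is not needed). Following the paper: choose $t$ so large that every node of $\mc{D}$ with index below that of $f(\rho)$ which is ever labeled $L$ is already labeled $L$ by stage $t$, and so that $\rho$ and $f(\rho)$ lie within the first $\extent(t)$ nodes of their structures. If at some expansionary stage $s > t$ a node $d_j$ below $f(\rho)$ carries exactly $\rho$'s labels, then $d_j$ is not labeled $L$ at stage $s$, hence never (by the choice of $t$), so Lemma \ref{lem:bec-iso} applies to the pair $(\rho, d_j)$ and yields $T_{d_j} \cong T_\rho \cong \mc{C}_n$; since $f(\rho)$ is also never labeled $L$ and $T_{f(\rho)} \cong \mc{C}_n$, the structure $\mc{D} \cong \mc{A}$ would then contain two never-$L$ nodes tagged $\mc{C}_n$, whereas $\rho$ is the unique such node of $\mc{A}$ when $n \notin S$. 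A symmetric application of Lemma \ref{lem:bec-iso} (matching $f(\rho)$ against a node $\nu \neq \rho$ of $\mc{A}$ supplied by expansionarity, and concluding $T_{f(\rho)} \cong T_\nu \cong \mc{C}_\infty$, a contradiction) shows that $f(\rho)$ itself must match $\rho$ at every expansionary stage $s > t$. Without this use of Lemma \ref{lem:bec-iso}, the stabilization of the target---and with it the rest of your argument, which is otherwise correct---is unsupported.
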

\begin{proof}
Suppose to the contrary that $\mc{A}$ was isomorphic to $\mc{D}$ via an isomorphism $f$. Then by Lemma \ref{lem:inf-exp} there are infinitely many expansionary stages.

Note that $\rho$ is the only node of $\mc{A}$ which is both not labeled $L$ and which is tagged $\mc{C}_n$. Since $\mc{C}_n$ and $\mc{C}_\infty$ are not isomorphic, no node $\sigma_i^{\Box}$ is tagged $\mc{C}_n$, and since there are infinitely many expansionary stages, each $\tau_{i,s}^{\Box}$ is labeled $L$.

Let $d_0,d_1,d_2,\ldots$ list the elements of $\mc{D}$, and let $d_i = f(\rho)$. Let $t$ be a stage after which each of $d_0,\ldots,d_{i-1}$, if it is the image, under $f$, of a node $\tau_{i,s}^{\Box}$ or one of its $T$-elements, is labeled $L$; thus, if one of these elements ever becomes labeled $L$, it does so by stage $t$. Suppose that $t$ is also large enough that $\rho$ and $d_i$ are among the first $\extent(t)$ nodes of $\mc{A}$ and $\mc{D}$ respectively. We claim that for all expansionary stages $s > t$, $\target(s) = d_i$.

Suppose to the contrary that there is an expansionary stage $s$ at which $\target(s) \neq d_i$. Since $\rho$ is among the first $\extent(t)$ nodes of $\mc{A}$, there is at least one $d_j \in \mc{D}[s]$ among the first $\extent(s)$ nodes of $\mc{D}$ which has the same labels as $\rho$ at stage $s$; since $\target(s) \neq d_i$, there is one such $d_j \neq d_i$.

Then either $d_i$ and $\rho$ are labeled differently at stage $s$, or there is a node $d_j$, $j < i$, among the first $\extent(s)$ nodes of of $\mc{D}$, which is labeled in the same way as $d_i$ at stage $s$ (and hence both are labeled in the same way as $\rho$). 

In the first case---if $d_i$ and $\rho$ are labeled differently at stage $s$---then there is another node $\nu \neq \rho$ of $\mc{A}[s-1]$, which is among the first $\extent(s)$ nodes of $\mc{A}$, which is labeled in the same way as $d_i$ is in $\mc{D}[s]$. Note that $d_i$ is not labeled $L$, as $f(\rho) = d_i$. So by Lemma \ref{lem:bec-iso}, $T_{d_i}$ is isomorphic to $T_\nu$; and, since $\nu \neq \rho$, and $\nu$ is not labeled $L$, $\nu$ is of the form $\sigma_i^\Box$ and so $T_\nu$ is isomorphic to $\mc{C}_\infty$. This is a contradiction, as $d_i = f(\rho)$ and $T_\rho$ is isomorphic to $\mc{C}_n$.

In the second case---if there is a node $d_j$, $j < i$, among the first $\extent(s)$ nodes of $\mc{D}$, which is labeled in the same way as $d_i$ at stage $s$---by Lemma \ref{lem:bec-iso}, $T_{d_j}$ and $T_{d_i}$ are both isomorphic to $T_\rho = \mc{C}_n$ and not labeled $L$. But then $\mc{D}$ cannot be isomorphic to $\mc{A}$, as $\rho$ is the only node $\nu$ of $\mc{A}$ not labeled $L$ and with $T_\nu$ isomorphic to $\mc{C}_n$


So for all expansionary stages $s > t$, $\target(s) = d_i$. If $W_{f(\rho)} = \omega^*$, then at some point the greatest element of $W_{f(\rho)}$ is enumerated into $\mc{D}$, and the 1-diagram says that this is the greatest element. Then, from some sufficiently large expansionary stage on, the direction is always $\RIGHT$. Thus $W_\rho = \omega$. On the other hand, if $W_{f(\rho)} = \omega$ or $\omega^* + \omega$, then there is never a greatest element of $W_{f(\rho)}$, and so the direction is always $\LEFT$. Then $W_\rho = \omega^*$. In all cases, $W_\rho$ is not isomorphic to $W_{f(\rho)}$, a contradiction.
\end{proof}

\begin{lemma}\label{lem-cons-unif}
If $n \in S$, then $\mc{A}$ has a $1$-decidable presentation which we can construct uniformly.
\end{lemma}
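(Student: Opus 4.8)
The plan is to exploit the fact that, when $n \in S$, the one piece of $\mc{A}$ whose given copy need not be $1$-decidable is the $\rho$-component---its linear order $W_\rho$, produced by the direction-changing strategy, is built to be indeterminate---while every other component is $1$-decidable. I would show this problematic component is redundant: its full isomorphism type, labels included, is shared by infinitely many of the other components, each $1$-decidable. Concretely, I would let $\mc{A}'$ be the structure built exactly as $\mc{A}$ but omitting the node $\rho$ and its component, so that the labels on $\mc{A}'$ are precisely those $\mc{A}$ places on its non-$\rho$ nodes and their $T$-elements. It then suffices to prove two things: that $\mc{A}'$ is $1$-decidable uniformly in $n$, and that $\mc{A}' \cong \mc{A}$ whenever $n \in S$.

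For $1$-decidability I would first build the unlabeled structure $\mc{B}'$ underlying $\mc{A}'$, namely the disjoint union (via the node structure) of the tag-plus-linear-order components $B_\nu$ for $\nu \neq \rho$. Each $B_\nu$ is a disjoint union of a decidable copy of $\mc{C}_n$ or $\mc{C}_\infty$ (Lemma \ref{lem:seq}) with a decidable copy of $\omega$, $\omega^*$, or $\omega^* + \omega$, hence is decidable, uniformly, by Lemma \ref{lem:disj}. These components fall into the five syntactic families $\sigma^{\leftmapsto}, \sigma^{\mapsto}, \sigma^{\leftrightarrow}, \tau^{\leftmapsto}, \tau^{\mapsto}$, each already infinite by stage $0$; within each family every $B_\nu$ is built identically, so all its members are isomorphic and in particular elementarily equivalent. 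Applying Lemma \ref{lem:disj-inf} to each family and then Lemma \ref{lem:disj} to combine the five families shows $\mc{B}'$ is decidable, uniformly in $n$ and with no knowledge of whether $n \in S$. Since the labels are $\Sigma^0_1$ over the $1$-diagram and $\mc{A}' = (\mc{B}')^{X'}$ for the corresponding computable sequence of c.e.\ label codes $X'$, Lemma \ref{lem:lab-1dec} yields a $1$-decidable copy of $\mc{A}'$, uniformly.

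To see $\mc{A}' \cong \mc{A}$ when $n \in S$, I would note that, since there are no relations between components, removing a single component whose isomorphism type recurs infinitely often gives an isomorphic structure; so it suffices to check the type of the $\rho$-component has infinite multiplicity among the remaining components. If there are infinitely many expansionary stages, then by Lemma \ref{lem:label} every $\rho$- and $\sigma_i^{\Box}$-node and its $T$-elements carry exactly the same labels, and since $\mc{C}_n \cong \mc{C}_\infty$ the $\rho$-component is isomorphic to every $\sigma_i^{\Box}$-component whose linear order matches $W_\rho$ (one of $\omega$, $\omega^*$, $\omega^*+\omega$), of which there are infinitely many. If there are only finitely many expansionary stages, with last one $s^*$, then $W_\rho$ is $\omega$ or $\omega^*$, no labels are added after $s^*$, and the nodes $\tau_{i,s^*}^{\mapsto}$ (resp. $\tau_{i,s^*}^{\leftmapsto}$) are tagged $\mc{C}_n$, carry exactly $\rho$'s final labels, and are never labeled $L$; these supply infinitely many components isomorphic to the $\rho$-component. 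In either case the $\rho$-type appears with infinite multiplicity in $\mc{A}'$, so $\mc{A} \cong \mc{A}'$.

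The main obstacle I anticipate is the bookkeeping of the finite-expansionary case: one must verify that the decoys $\tau_{i,s^*}^{\Box}$ built at the \emph{final} expansionary stage inherit $\rho$'s labels verbatim and never receive $L$---whereas all earlier $\tau$-nodes do receive $L$ at the next expansionary stage---so that it is exactly these, and not the earlier decoys, that furnish the infinitely many copies of the $\rho$-component. The second delicate point is uniformity: the constructions of $\mc{B}'$ and of $X'$, and hence the $1$-decidable copy, must be produced effectively from $n$ without deciding membership in $S$, which is why the decidability of $\mc{B}'$ is arranged to hold unconditionally and the hypothesis $n \in S$ enters only in the isomorphism argument.
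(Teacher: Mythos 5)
Your proposal is correct and follows essentially the same route as the paper's proof: omit the $\rho$-component, show the rest is decidable via Lemmas \ref{lem:disj}, \ref{lem:disj-inf} and the labeling Lemma \ref{lem:lab-1dec}, and argue that the $\rho$-component's isomorphism type recurs infinitely often (among the $\sigma_i^\Box$-components if there are infinitely many expansionary stages, among the $\tau_{i,s^*}^\Box$-components otherwise). The only cosmetic differences are that you combine the five families with Lemma \ref{lem:disj} (plus an implicit reduct) where the paper invokes Lemma \ref{lem:disj-mixed}, and that you cite Lemma \ref{lem:label} for the equal-labels claim, which is in fact the correct reference.
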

\begin{proof}
Since $n \in S$, $\mc{C}_n \cong \mc{C}_\infty$. We claim that if we run the construction without building the node $\rho$ and its component, we get a structure $\mc{A}^-$ which is 1-decidable and isomorphic to $\mc{A}$. To see that $\mc{A}^-$ is isomorphic to $\mc{A}$, there are two cases. First, if there are infinitely many expansionary stages then, by Lemma \ref{lem:inf-exp}, $\rho$ and its $T$-elements, and each node $\sigma_i^{\Box}$ and their $T$-elements, all have the same labels. So $\rho$ and its component is actually isomorphic to each of the $\sigma_i^{\Box}$ and their components for the appropriate choice of $\Box$. Since there are infinitely many such nodes, removing $\rho$ does not change the isomorphism type.

On the other hand, if there are only finitely many expansionary stages, then let $s^*$ be the last expansionary stage. After that stage, we never add any more labels. Then $\rho$ and its component is isomorphic to each of the $\tau_{i,s^*}^{\Box}$ and their components for some $\Box \in \{\leftmapsto,\mapsto\}$.

Now we will argue that $\mc{A}^-$ is 1-decidable. By Lemma \ref{lem:lab-1dec}, it suffices to show that the reduct of $\mc{A}^-$ to the language without the labels is 1-decidable (in fact this reduct is decidable), from which it will follow that $\mc{A}^-$ itself, with the labels, is 1-decidable. The rest of the proof of this lemma will be in this smaller language without the labels.

Whenever we add a new node $\nu$ to $\mc{A}^-$, we immediately decide whether $T_\nu = \mc{C}_n$ or $T_\nu = \mc{C}_\infty$, and whether $W_\nu$ is isomorphic to $\omega$, $\omega^*$, or $\omega^* + \omega$. These structures---$\mc{C}_n$, $\mc{C}_\infty$, $\omega$, $\omega^*$, and $\omega^* + \omega$---all have decidable presentations. So the structure which is the disjoint union of $T_\nu$ and $W_\nu$ is decidable, uniformly in $\nu$, by Lemma \ref{lem:disj}. Since this disjoint union is essentially (i.e., up to effective bi-interpretability using finitary $\Delta_0$ formulas) the $\nu$-component, the $\nu$-component is decidable.

By Lemma \ref{lem:disj-inf}, the following five structures are decidable:
\begin{enumerate}
	\item The disjoint union of the $\sigma_i^{\Box}$-components, for a fixed $\Box \in \{\leftmapsto, \mapsto, \leftrightarrow\}$.
	\item The disjoint union of the $\tau_{i,s}^{\Box}$-components, for a fixed $\Box \in \{ \leftmapsto, \mapsto \}$.
\end{enumerate}
Then by Lemma \ref{lem:disj-mixed}, the disjoint union of these five structures is also decidable. This is effectively bi-interpretable, using finitary $\Delta_0$ formulas, to $\mc{A}^-$, which is thus decidable.
\end{proof}

Lemmas \ref{lem:succ-dia} and \ref{lem-cons-unif} are exactly what we wanted from the construction, and complete the proof of Theorem \ref{thm:n-pres}.

\section{Decidably presentable structures}\label{sec:d}

In this section, we will add a guessing argument to the construction from the previous section to show that the index set of decidably presentable structures is $\Sigma^1_1$-complete (Theorem \ref{thm:d-pres}). The new issue that we have to deal with is that the system of labeling which we used previously no longer works with decidable structures, as we cannot make labels which are $\Sigma^0_1$ over the elementary diagram. Instead of labeling elements with existential facts, we will label them by the existence of a non-principal type, which is a $\Sigma^0_2$ fact over the elementary diagram. Then, when examining the decidable structure $\mc{D}$ against which we are diagonalizing, we must guess at the labels.

The argument will also complete the proof of Theorem \ref{thm:n-pres}. See Section \ref{sec:sec-part}.

\subsection{\texorpdfstring{$\Sigma^0_2$ labeling of decidable structures}{Sigma2 labeling of decidable structures}}

This subsection will be analogous to Section \ref{sec:labels}. Once again, fix an infinite computable set $\mc{L}$ of labels. Given a decidable structure $\mc{A}$ and a sequence $X = (X_{\ell})_{\ell \in \mc{L}}$ of subsets of $\mc{A}$, we want to build a two-sorted structure $\mc{A}^X$, whose first sort is just the structure $\mc{A}$, which codes $X$ in a $\Sigma^0_2$ way over the elementary diagram of $\mc{A}$.

We can build $\mc{A}^X$ as follows. $\mc{A}^X$ will again be two-sorted, with the first sort consisting of $\mc{A}$. We will call the second sort $\mc{S}$. The language of $\mc{A}^X$ will be the language of $\mc{A}$ augmented with a function $f \colon \mc{S} \to \mc{A}$, a unary predicate $U^\ell \subseteq \mc{S}$ for each label $\ell$, and infinitely many unary relations $R_i \subseteq \mc{S}$, $i \in \omega$.

The second sort $\mc{S}$ will be partitioned into the pre-images $f^{-1}(x)$ of the elements $x \in \mc{A}$, and each fibre $f^{-1}(x)$ will be partitioned into infinitely many disjoint sets $U^\ell$. If $i < i'$, and $R_{i'}$ holds of an element, then $R_{i}$ will hold of that element, and for each $x$, $\ell$, $i$ there will be infinitely many elements of $f^{-1}(x) \cap U^\ell$ satisfying $R_j$ for $j < i$ but not $R_{i}$. There is a unique non-principal type $p_\ell$ in $f^{-1}(x) \cap U_\ell$ of an element satisfying $R_i$ for all $i$.

We will define the relations $R_i$ such that, given $x \in \mc{A}$ and $\ell$, if $x \in X_\ell$ then there is a single realization of the non-principal type $p_\ell$ in $f^{-1}(x) \cap U^\ell$, and otherwise there will be no realizations of $p_\ell$ in $f^{-1}(x) \cap U^\ell$.

\begin{lemma}\label{lem:s2-def}
Let $\mc{A}$ be a structure and let $X = (X_{\ell})_{\ell \in \mc{L}}$ be a sequence of $\Sigma^0_2$ subsets of $\mc{A}$. The sets $X_{\ell}$ are definable in $\mc{A}^X$ by computable formulas of the form $\exists x \bigwedge_{i \in I} \psi_i(x,\cdot)$, with the $\psi_i$ quantifier-free. These formulas are computable uniformly in $\ell$, and are independent of $\mc{A}$ or $X$.
\end{lemma}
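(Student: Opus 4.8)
The plan is to define $X_\ell$ as the set of those $v$ in the first sort for which a realization of the non-principal type $p_\ell$ appears in the fibre--label class $f^{-1}(v) \cap U^\ell$. Since the realizations of $p_\ell$ in such a class are, by construction, exactly the elements satisfying $R_i$ for every $i$, I would take as the defining formula
\[ \varphi_\ell(v) \;\equiv\; \exists x \Bigl[\, f(x) = v \;\wedge\; U^\ell(x) \;\wedge\; \bigwedge_{i \in \omega} R_i(x) \,\Bigr]. \]
This is a computable formula of the required shape $\exists x \bigwedge_{i \in I} \psi_i(x,\cdot)$: the conjuncts $\psi_i$ are the atomic formulas $f(x) = v$, $U^\ell(x)$, and $R_i(x)$ ($i \in \omega$), and the index set is computable. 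Note that a finitary formula (as in Lemma \ref{lem:ea-def}) cannot work here, because membership in $X_\ell$ is witnessed by a realization of a \emph{non-principal} type, which no single quantifier-free condition isolates; the infinite conjunction $\bigwedge_i R_i$ is exactly what cuts down to the limit of the nested tower of the $R_i$.

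To verify that $\varphi_\ell$ defines $X_\ell$, I would argue both directions directly from the construction. If $v \in X_\ell$, then there is a (single) realization $x$ of $p_\ell$ in $f^{-1}(v) \cap U^\ell$; such an $x$ satisfies $f(x) = v$, $U^\ell(x)$, and $R_i(x)$ for all $i$, and so witnesses $\varphi_\ell(v)$. Conversely, suppose $x$ witnesses $\varphi_\ell(v)$. Then $x \in f^{-1}(v) \cap U^\ell$ and $x$ satisfies $R_i$ for every $i$; by construction the type of any such element is forced to be $p_\ell$, so $x$ realizes $p_\ell$ in $f^{-1}(v) \cap U^\ell$. But realizations of $p_\ell$ occur in this class only when $v \in X_\ell$, so $v \in X_\ell$.

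Finally, the uniformity and independence claims are immediate from the shape of $\varphi_\ell$: the formula mentions only the symbols $f$, $U^\ell$, and the $R_i$, together with the fixed computable index set $\omega$, and nowhere refers to $\mc{A}$ or to $X$. Producing $\varphi_\ell$ from $\ell$ is plainly computable. The one point that genuinely requires the construction---and the main thing to check with care---is the equivalence, inside each class $f^{-1}(v) \cap U^\ell$, between ``satisfies $R_i$ for all $i$'' and ``realizes $p_\ell$''; this rests on the facts that the $R_i$ are downward nested, that at each level infinitely many elements drop out, and that $p_\ell$ is the unique type of an element lying in all the $R_i$, so that no stray (in particular, principal) type can satisfy the full conjunction.
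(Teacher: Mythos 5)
Your proposal is correct and uses exactly the same defining formula as the paper, namely $(\exists y)\left[ f(y) = x \wedge U^\ell(y) \wedge \bigwedge_i R_i(y) \right]$; the paper simply states this formula without elaboration, while you additionally spell out the verification (via the characterization of realizations of the non-principal type $p_\ell$) that the paper leaves implicit.
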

\begin{proof}
The set $X_\ell$ is definable as the subset of the first sort of $\mc{A}^X$ defined by $(\exists y) \left[ f(y) = x \wedge U^\ell(y) \wedge \bigwedge_i R_i(y) \right]$.
\end{proof}

As a result, if $\mc{A}$ is computable, then the sets $X_\ell$ are uniformly $\Sigma^0_2$.

\begin{lemma}\label{lem:s2-comp}
Let $\mc{A}$ be a computable structure and let $X = (X_{\ell})_{\ell \in \mc{L}}$ be a uniform sequence of indices for $\Sigma^0_2$ subsets of $\mc{A}$. Then, uniformly in $X$ and in the atomic diagram of $\mc{A}$, we can build a computable copy of $\mc{A}^X$.
\end{lemma}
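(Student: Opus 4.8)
The plan is to mimic the proof of Lemma~\ref{lem:lab-comp}, replacing the single relation $R$ (which coded a $\Sigma^0_1$ fact) by the nested family $(R_i)_{i \in \omega}$ needed to code a $\Sigma^0_2$ fact as the realization of a non-principal type. First I would fix, uniformly in the given indices, a computable predicate $\theta$ with $x \in X_\ell \iff \exists m\, \forall n\; \theta(x,\ell,m,n)$, and from it define the least-apparent-witness approximation $m_s(x,\ell) = \min\{ m \le s : \theta(x,\ell,m,n) \text{ for all } n \le s\}$, setting $m_s(x,\ell) = s$ when this set is empty. The key feature, which I would record as a small claim, is that $s \mapsto m_s(x,\ell)$ is nondecreasing, and that $\lim_s m_s(x,\ell)$ is finite (equal to the least true witness) exactly when $x \in X_\ell$ and diverges to $\infty$ otherwise.

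Next I would describe the second sort $\mc{S}$ concretely. It consists of two kinds of codes, both carrying a fibre $x \in \mc{A}$ and a label $\ell$, so that $f$ and $U^\ell$ are read off the code immediately and are computable. \emph{Padding} elements carry in addition a fixed finite height $k$ and an index; we include infinitely many of each height $k \in \omega$ in every $f^{-1}(x) \cap U^\ell$, and a padding element of height $k$ is declared to satisfy exactly $R_0, \dots, R_{k-1}$. \emph{Candidate} elements carry a designation stage $s_0$: for each $(x,\ell)$ there is one candidate designated at $s_0 = 0$ and one designated at each later stage $s$ with $m_s(x,\ell) \ne m_{s-1}(x,\ell)$. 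For a candidate $y$ designated at $s_0$, I set $R_i(y)$ to hold if and only if $m_{s_0}(x,\ell) = m_{s_0+1}(x,\ell) = \cdots = m_{s_0+i}(x,\ell)$, i.e.\ the least apparent witness has not moved during the first $i+1$ stages of the candidate's life. Since this is a bounded, finite check, each $R_i$ is computable, and the definition is manifestly nested and downward closed in $i$; thus every element satisfies exactly an initial segment of the $R_i$, as required. Taking the atomic diagram of $\mc{A}$ for the first sort together with these definitions of $f$, $U^\ell$, and $R_i$ produces, uniformly, a computable structure, and it remains only to check that it is a copy of $\mc{A}^X$.

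The verification amounts to confirming the clause governing the non-principal type. A candidate $y$ designated at $s_0$ satisfies $R_i(y)$ for \emph{all} $i$ exactly when $m_s(x,\ell)$ never moves after $s_0$, and by the claim this happens for some candidate precisely when $x \in X_\ell$. Moreover, because a fresh candidate is designated at every stage where $m_s$ moves and $m_s$ is nondecreasing, there is then exactly one such candidate, namely the one designated at the stabilization stage, with all earlier candidates frozen at a finite height and no padding element of infinite height. Hence $f^{-1}(x) \cap U^\ell$ contains a single element satisfying every $R_i$ when $x \in X_\ell$ and none when $x \notin X_\ell$, while the padding guarantees infinitely many elements of each finite height in every fibre-label. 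So the unique non-principal type over $f^{-1}(x) \cap U^\ell$ is realized if and only if $x \in X_\ell$, exactly as specified in the definition of $\mc{A}^X$.

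The step I expect to be the main obstacle is securing \emph{uniqueness} of the infinite-height element when $x \in X_\ell$ while keeping each $R_i$ computable. The danger is that several apparent witnesses $m$ succeed, producing several realizations of $p_\ell$; the least-apparent-witness bookkeeping---advancing only the current candidate and starting a fresh one whenever the least witness increases---is what forces a single surviving candidate. The complementary point, that ``$R_i(y)$'' asks only whether the witness survived the first $i+1$ stages, a bounded question, is what keeps the relations computable even though the eventual lifetime of a candidate may be infinite.
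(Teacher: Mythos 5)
Your proposal is correct and is essentially the paper's proof: both constructions put the computable copy of $\mc{A}$ in the first sort and build each fibre $f^{-1}(x)\cap U^\ell$ from padding elements of every finite $R$-height together with candidate elements whose $R_i$-height tracks a computable, stage-by-stage approximation to the $\Sigma^0_2$ fact, so that exactly one candidate reaches infinite height when $x \in X_\ell$ and none does otherwise. The only difference is the device securing uniqueness: the paper first normalizes $X_\ell$ to the form $(\exists y)\left[(x,y) \in X^\Pi_\ell\right]$ with $X^\Pi_\ell$ a $\Pi^0_1$ set having a unique witness $y$ when $x \in X_\ell$, and then creates one candidate per potential witness, whereas you keep a general $\exists\forall$ presentation and enforce uniqueness through least-apparent-witness bookkeeping (fresh candidate at each change of the least witness); these are interchangeable implementations of the same idea.
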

\begin{proof}
Let $X_\ell$ be defined by
\[ x \in X_\ell \Longleftrightarrow (\exists y) \left[ (x,y) \in X^\Pi_\ell \right]\]
where $X^\Pi_\ell$ is $\Pi^0_1$ and, if $x \in X_\ell$, then there is a unique $y$ witnessing this. We can find such a set $X^\Pi_\ell$ uniformly in a $\Sigma^0_2$ index for $X_\ell$.

The copy of $\mc{A}^X$ we build will have the decidable copy of $\mc{A}$ in the first sort, and the second sort will contain elements $(x,\ell,s,t)$ and $(x,\ell,\infty,t)$ with $x$ from the first sort and $\ell$, $s$, and $t$ in $\omega$. We will have $f(x,\ell,s,t) = f(x,\ell,\infty,t) = x$ and $U^\ell(x,m,s,t)$ if and only if $m = \ell$. Given $s$, $t$, and $i$, we will have $R_i(x,\ell,s,t)$ if and only if $s < i$. We will have $R_i(x,\ell,\infty,t)$ if and only if $(x,t) \in X^\Pi_\ell$ at stage $i$. This defines a computable copy of $\mc{A}^X$.
\end{proof}

\begin{lemma}\label{lem:s2-dec}
Let $\mc{A}$ be a decidable structure and let $X = (X_{\ell_i})_{i \in \omega}$ be a uniform sequence of indices for $\Sigma^0_2$ subsets of $\mc{A}$. Then, uniformly in $X$ and in the elementary diagram of $\mc{A}$, we can build a the elementary diagram of a decidable copy of $\mc{A}^X$.
\end{lemma}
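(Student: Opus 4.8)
The plan is to follow the quantifier-elimination strategy of Lemma \ref{lem:lab-1dec}, but to push it all the way to full decidability rather than stopping at existential formulas. I would build the underlying computable copy of $\mc{A}^X$ exactly as in Lemma \ref{lem:s2-comp}, so that the second sort $\mc{S}$ consists of elements $(x,\ell,s,t)$ and $(x,\ell,\infty,t)$ and every atomic fact --- in particular whether $R_i$ holds of a named element, for any fixed $i$ --- can be decided by running the $\Pi^0_1$ approximation $X^\Pi_\ell$ out to stage $i$. Since $f$ is surjective and each fibre is infinite, I may assume, as in Lemma \ref{lem:lab-1dec}, that all variables range over $\mc{S}$, and I rewrite every subformula in the language of $\mc{A}$ as a predicate $P^{\theta(y_1,\dots,y_n)}(f(z_1),\dots,f(z_n))$, where now $\theta$ is allowed to be an \emph{arbitrary} first-order formula of $\mc{A}$. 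Because $\mc{A}$ is decidable, each such $P^\theta$ is a decidable relation of $\mc{A}^X$, and this is the point at which full decidability of $\mc{A}$ (rather than mere $1$-decidability) is used.

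Next I would eliminate quantifiers one at a time by induction, showing that every formula is equivalent in $\mc{A}^X$ to a Boolean combination of predicates $P^\theta$ and second-sort atomic formulas ($U^\ell(z)$, $R_i(z)$, $z = z'$, $f(z) = f(z')$). The work is in the inductive step on $(\exists z_n)\varphi(z_1,\dots,z_n)$ with $\varphi$ quantifier-free. Writing $\varphi$ in the normal form of Lemma \ref{lem:lab-1dec}, the cases $z_n = z_j$ and $f(z_n) = f(z_j)$ are handled as there, while the case in which $f(z_n)$ is a fresh element of $\mc{A}$ is absorbed into a first-order existential over the first sort, i.e.\ into a predicate $P^{(\exists y_n)\theta}$, still decidable since $\mc{A}$ is decidable. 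The one fact that must be verified is that, once the fibre $f^{-1}(x)$, the predicate $U^\ell$, and the finitely many constraints $R_i(z_n)/\neg R_i(z_n)$ appearing in $\varphi$ are fixed, a witness $z_n$ distinct from the finitely many named elements always exists; this holds because each block $f^{-1}(x) \cap U^\ell$ contains infinitely many elements of every finite level. Consequently the reduction is entirely independent of $X$: the sequence $X$ enters only when the resulting Boolean combination is finally evaluated at the given parameters, and there it enters only through atomic facts, which are computable on the copy built above.

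The main obstacle is precisely this $X$-independence of the existential step, and it is what deserves careful justification. The membership $x \in X_\ell$ is witnessed only by a realization of the non-principal type $p_\ell$, i.e.\ by an element satisfying the infinitary conjunction $\bigwedge_i R_i$; no finitary formula mentions more than finitely many of the $R_i$, so for any fixed formula a level-$\infty$ element has the same atomic type (over the finitely many $R_i$ that occur) as an element of any sufficiently high finite level, of which there are infinitely many in each block. Hence no finitary formula can require --- or forbid --- the non-principal type, and the witness counts used in the elimination never depend on whether $x \in X_\ell$. Combining the quantifier elimination with the decidability of $\mc{A}$ (for the predicates $P^\theta$) and the computability of the atomic diagram of the copy (for the second-sort atomics, and for the $R_i$-values of the parameters), I obtain a decision procedure for the elementary diagram of $\mc{A}^X$, uniform in the elementary diagram of $\mc{A}$ and in the indices for $X$, as required.
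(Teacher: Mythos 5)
Your proof is correct and follows essentially the same route as the paper: the paper builds the same computable copy as in Lemma \ref{lem:s2-comp} and then observes that each reduct $\mc{A}^X[\ell]$ retaining only $R_0,\ldots,R_\ell$ is decidable uniformly in $\ell$, by exactly the quantifier elimination to $P^\theta$-predicates (with $\theta$ an arbitrary first-order formula, using full decidability of $\mc{A}$, and without the predicate $Q$) that you carry out. Your ``$X$-independence of the existential step'' is precisely the paper's observation that these truncated structures do not depend on $X$, since a formula mentioning only finitely many $R_i$ cannot distinguish a level-$\infty$ realization of the non-principal type from the infinitely many elements at sufficiently high finite levels.
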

\begin{proof}
We can build a decidable copy of $\mc{A}^X$ by putting the decidable copy of $\mc{A}$ in the first sort, and defining the second sort as in the previous lemma. This copy of $\mc{A}^X$ is decidable.

For each $\ell$, let $\mc{A}^X[\ell]$ be the reduct of $\mc{A}^X$ which discards all of the predicates $R_i$ except for $R_0,\ldots,R_\ell$. We claim that $\mc{A}^X[\ell]$ is decidable uniformly in $\ell$. From this it will follow that $\mc{A}^X$ is decidable.

These reducts are quite simple structures: Given $x \in \mc{A}$, there are infinitely many elements $y$ of $f^{-1}(x)$, each of which each have, for each $0 \leq i \leq \ell + 1$, infinitely many elements in $g^{-1}(y)$ with $R_j$ for $j < i$ but not $R_{i}$. Thus any two such elements $y$ are isomorphic. A simple argument, in the style of Lemma \ref{lem:lab-1dec} (or Lemma \ref{lem:mark-dec} to follow) but without having to introduce the predicate $Q$, shows that every formula is equivalent in $\mc{A}^X[\ell]$ to a quantifier-free formula in the language with the additional predicate
\[ P^{\theta(y_1,\ldots,y_n)}(x_1,\ldots,x_n) = \{ (a_1,\ldots,a_n) \in \mc{A}^n \colon \mc{A} \models \theta(a_1,\ldots,a_n) \} \]
where $\theta$ is any formula in the language of $\mc{A}$.
\end{proof}

\subsection{The guesses}\label{sec:guesses}

In this section, fix a (possibly partial) decidable structure $\mc{D}$, and a computable sequence $X = (X_\ell)_{\ell \in \mc{L}}$ of indices of $\Sigma^0_2$ subsets of $\mc{D}$, just as one might obtain from a decidable copy of $\mc{D}^X$ as in Lemma \ref{lem:s2-def}. (Even if $\mc{D}$ is a partial structure, we can still obtain a sequence of $\Sigma^0_2$ sets in this way.) We will describe a way of guessing at membership in the sets $X_\ell$. Write
\[ x \in X_\ell \Longleftrightarrow (\exists n) (\forall m) \left[ (x,n,m) \in X_\ell^{\texttt{c}} \right] \]
for some uniformly computable predicates $X_\ell^{\texttt{c}}$. Fix an enumeration of the tuples $(x,\ell,n)$, where $x \in \mc{D}$, $\ell$ is a label, and $n \in \omega$. Assume that in this enumeration, if $(x,\ell,n)$ comes before $(x,\ell,n')$, then $n < n'$.


At each stage $s$, we will have a guess $G_s$ at which elements look like they are in $X_\ell$, and at what the witnesses are. $G_s$ will be a finite set of tuples $(x,\ell,n)$. For each $(x,\ell,n) \in G_s$, we will have that for all $m < s$, $(x,n,m) \in X_\ell^{\texttt{c}}$; the converse will not necessarily be true. If, for all $m < s$, $(x,n,m) \in X_\ell^{\texttt{c}}$, and $n$ is the least such witness, then we say that \textit{$x$ appears to be labeled $\ell$ at stage $s$ with witness $n$}. Note that if, at some stage, $x$ appears to be labeled $\ell$ with witness $n$, and then at some later stage, $x$ does not appear to be labeled $\ell$ with witness $n$, then $x$ can never again appear to be labeled $\ell$ with witness $n$. It is, however, possible for $x$ to not appear to be labeled $\ell$ with witness $n$, then later to appear to be labeled $\ell$ with witness $n$, and then later to again not appear to be labeled $\ell$ with witness $n$.

Begin with $G_0 = \varnothing$. At stage $s$, we will have defined $G_{s^*}$ for $s^* < s$. We must now define $G_s$. If there is some $(x,\ell,n) \in G_{s-1}$ so that $x$ does not appear to be labeled $\ell$ at stage $s$ with witness $n$, then we have made a mistake. In this case, let $t < s$ be greatest such that for each $(x,\ell,n) \in G_{t}$, $x$ appears to be labeled $\ell$ at stage $s$ with witness $n$, and let $G_s = G_t$. Otherwise, if there are no mistakes to correct, let $(x,\ell,n)$ be least (in our fixed enumeration) such that $(x,\ell,n) \notin G_{s-1}$ but $x$ appears to be labeled $\ell$ at stage $s$ with witness $n$. Let $G_s = G_{s-1} \cup \{(x,\ell,n)\}$. (If no such tuple exists, let $G_s = G_{s-1}$.) Note that there is no other $m \neq n$ with $(x,\ell,m) \in G_{s-1}$.

We will borrow some notation from Ash's $\alpha$-systems \cite{AshA,AshB} to talk about the true path. Write $s \leq_0 t$ if and only if $s \leq t$, and $s \leq_1 t$ if $s \leq t$ and $G_s \subseteq G_t$.

\begin{lemma}\label{lem:leq1}
If $s < t < u$, and $s \leq_1 u$, then $s \leq_1 t$.
\end{lemma}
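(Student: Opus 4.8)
The plan is to recast the statement in the following equivalent form: for each fixed stage $s$, the set $\{b \geq s : G_s \subseteq G_b\}$ is an initial segment of $[s,\infty)$. Indeed, $s \leq_1 u$ together with $s < t < u$ yielding $s \leq_1 t$ is precisely the assertion that once $G_s \subseteq G_b$ holds it cannot have failed at an earlier stage $t \in [s,b]$. So I will assume $s \leq_1 u$, take some intermediate $t$ with $s < t < u$, and derive $G_s \subseteq G_t$, arguing by contradiction: if some $e \in G_s$ is missing from $G_t$, I will exhibit a tuple of $G_s$ that is \emph{permanently} lost, contradicting $e' \in G_s \subseteq G_u$ for that tuple $e'$.

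Two basic observations drive the argument. First, reading the construction directly, $e \in G_r$ implies that $e$ \emph{appears to be labeled} at stage $r$: the add-branch is taken only when no element of $G_{r-1}$ has ceased to appear, and the rollback target $G_{c'}$ is by definition an all-appearing configuration. Second, by the monotonicity noted in the text (once $x$ ceases to appear to be labeled $\ell$ with witness $n$ it never does so again), any tuple that stops appearing at some stage lies in no later $G_b$ whatsoever. Thus it suffices, at the rollback stage where $e$ is first dropped, to locate a tuple that \emph{disappears} there and that also belongs to $G_s$.

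The technical core is a \emph{stack structure} which I will prove by induction on $s$, simultaneously with the lemma itself. The inductive claim is that the down-set $\{G_r : r \leq s,\ G_r \subseteq G_s\}$ is a finite chain under inclusion whose consecutive members differ by a single tuple, so that its members are exactly the prefixes $\varnothing,\{g_1\},\{g_1,g_2\},\dots,\{g_1,\dots,g_m\}=G_s$ of a sequence of tuples, and that each step of the construction either pushes one tuple or pops to such a prefix. Granting this, the lemma follows: if $e\in G_s\setminus G_t$, let $c\in(s,t]$ be the rollback stage first removing $e$; since the target $G_{c'}$ excludes $e$ but $G_s$ is a prefix containing $e=g_j$, the target sits at a level $p'<j$, which forces the tuple $g_{p'+1}$ (with $p'+1\le j\le p$, where $G_s=\{g_1,\dots,g_p\}$) to be one that disappears at stage $c$. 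That tuple lies in the prefix $G_s$ yet, by the second observation, appears in no $G_b$ for $b>c$; as $c<u$, this contradicts $G_s\subseteq G_u$.

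The main obstacle is maintaining the stack structure across a rollback, i.e.\ showing the rollback target is always a prefix of the current stack $G_{s-1}$ rather than some incomparable earlier configuration of higher stage number. The key point to verify is that \emph{any} configuration $G_r$ not contained in $G_{s-1}$ fails to be all-appearing at stage $s$: a tuple of $G_r$ absent from $G_{s-1}$ was dropped by an earlier rollback, and by the companion-tuple analysis above this drop is caused either by that tuple itself vanishing or by a lower tuple of $G_r$ vanishing—in either case $G_r$ contains a vanished tuple and is disqualified as a target. This, combined with ruling out the spurious re-addition case in the push step (which is exactly the same contradiction, applied at the smaller top stage where the induction hypothesis is available), closes the simultaneous induction; the remaining bookkeeping about which prefix is realized at which stage is routine.
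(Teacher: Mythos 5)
Your argument is sound and reaches the right conclusion, but it takes a genuinely different and considerably heavier route than the paper. Both proofs rest on the same two observations you isolate: every tuple in $G_r$ appears to be labeled at stage $r$, and a tuple that ceases to appear can never appear again, hence lies in no later $G_b$. The difference is how one produces a tuple of $G_s$ that actually ceases to appear. You do it by establishing a global structural invariant of the construction---the stack discipline, proved by simultaneous induction, together with the companion-tuple analysis showing that rollback targets are always prefixes realized at increasing stages. The paper instead gets it essentially for free from the minimal-counterexample choice: taking $t$ least with $G_s \nsubseteq G_t$, stage $t$ must be a rollback, and its target $G_{t'}$ (the greatest earlier all-appearing configuration) cannot have $t' \geq s$, since $t' = s$ would give $G_t = G_s$ while $s < t' < t$ would give $G_s \subseteq G_{t'} = G_t$ by minimality of $t$; so if every tuple of $G_s$ still appeared at stage $t$, stage $s$ itself would be a candidate target, a contradiction. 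Hence some tuple of $G_s$ fails to appear at stage $t$, and permanence finishes the proof in a few lines, with no invariant about the shape of the $G_r$'s needed. What your approach buys is a sharper picture of the approximation (the $G_r$'s really do form a stack, which is of independent interest); what it costs is the simultaneous induction and the deferred bookkeeping. One slip to repair: you take $c$ to be the first stage removing the particular tuple $e$ and assert that $G_s$ is then a prefix of $G_{c-1}$; this can fail, since a tuple of $G_s$ lying \emph{above} $e$ in the stack may be popped at an earlier rollback (its companion, also a member of $G_s$, having vanished), so that $G_s \nsubseteq G_{c-1}$. Either take $c$ least with $G_s \nsubseteq G_c$---which makes the prefix claim correct---or observe that in the bad case a tuple of $G_s$ has already permanently vanished before stage $c$, which yields your contradiction with $G_s \subseteq G_u$ directly.
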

\begin{proof}
Suppose to the contrary that $s \nleq_1 t$, so that $G_s \nsubseteq G_t$. We may assume that $t$ is the least such. So $G_s \subseteq G_{t-1}$. Since $G_{s} \nsubseteq G_t$, we can see from the definition of $G_t$ that there is $(x,\ell,n) \in G_s$ so that $x$ does not appear to be labeled $\ell$ at stage $t$ with witness $n$. By choice of $t$, at stage $t-1$, $x$ appeared to be labeled $\ell$ with witness $n$. So, at stage $u$, that $x$ cannot appear to be labeled $\ell$ with witness $n$, and so $(x,\ell,n)$ cannot be in $G_u$. So $s \nleq_1 u$.
\end{proof}

We say that a stage $s$ is a \textit{true stage} if, for all $t > s$, $s \leq_1 t$.

\begin{lemma}
There are infinitely many true stages.
\end{lemma}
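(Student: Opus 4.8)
The plan is to work entirely with the relation $\leq_1$ and the tree structure that Lemma \ref{lem:leq1} gives it. First observe that $\leq_1$ is a partial order: it is reflexive and antisymmetric, and if $s \leq_1 t \leq_1 u$ then $s \le u$ and $G_s \subseteq G_t \subseteq G_u$, so $s \leq_1 u$. By Lemma \ref{lem:leq1}, for each $u$ the down-set $\{t : t \leq_1 u\} = \{t \le u : G_t \subseteq G_u\}$ is linearly ordered by $\leq_1$ (if $t,t'$ both lie in it with $t < t'$, then $t \leq_1 u$ and $t < t' \le u$ give $t \leq_1 t'$), so $(\omega,\leq_1)$ is a tree, and a stage $s$ is true exactly when it lies $\leq_1$-below every later stage. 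Since $G_0 = \varnothing \subseteq G_u$ for every $u$, the stage $0$ is true. It therefore suffices to show that every true stage $v$ has a true stage above it; iterating then produces infinitely many.

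Fix a true stage $v$, so $G_v \subseteq G_u$ for all $u > v$. For each $u > v$ let $t_v(u)$ be the least $t$ with $v < t \le u$ and $t \leq_1 u$; this exists because $t = u$ qualifies, and it is the $\leq_1$-successor of $v$ in the down-chain of $u$. I distinguish two cases according to the behaviour of $t_v(u)$ as $u \to \infty$. In the first case some value $t^*$ satisfies $t_v(u) = t^*$ for infinitely many $u$. Then for any $w > t^*$ choose such a $u$ with $u > w$; from $t^* \leq_1 u$ and $t^* < w < u$, Lemma \ref{lem:leq1} gives $t^* \leq_1 w$. Hence $t^* \leq_1 w$ for every $w > t^*$, so $t^*$ is a true stage, and $t^* > v$, as required.

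The remaining case is that $t_v(u) \to \infty$, and this is where the real work lies: I must rule it out using the mechanics of the guessing construction. The enabling observation is that, for a fixed tuple $(x,\ell,n)$, the set of stages at which $x$ appears to be labeled $\ell$ with witness $n$ is a single interval of stages, since enlarging $s$ only destroys witnesses (the condition that $(x,n,m) \in X_\ell^{\texttt{c}}$ for all $m < s$ is monotone in $s$) and moves the least surviving witness upward. Consequently each tuple is either eventually permanently appearing (exactly when $x \in X_\ell$ with least witness $n$) or eventually never appears. Using this, one analyzes the rollback rule: since a rollback resets $G_s$ to the greatest earlier stage all of whose tuples still appear, when the oldest currently-present transient (non-permanent) tuple dies, the rollback is forced back past the stage at which it was added and hence clears every tuple added after it. A finite-injury argument on the enumeration order of tuples then shows that the configuration $G_s$, restricted to stages $\leq_1$-above $v$, returns cofinally often to a fixed value $G_{t_1}$ with $v <_1 t_1$; this bounds $t_v(u)$ and contradicts $t_v(u) \to \infty$. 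Carrying out this finite-injury bookkeeping carefully — in particular handling transients that are removed by a rollback while still appearing and are later re-added — is the main obstacle; the appearance-interval observation and the ``roll back past the oldest dead tuple'' principle are the two ingredients that make it go through.
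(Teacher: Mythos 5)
Your reduction to ``every true stage has a true stage above it'' is sound (stage $0$ is true, so there is one to start from), and your Case 1 --- where some value $t^*$ recurs infinitely often as $t_v(u)$ --- is handled correctly via Lemma \ref{lem:leq1}. The problem is Case 2. Ruling out $t_v(u) \to \infty$ is not a residual technicality: it \emph{is} the lemma, and there you offer only a plan. The central assertion of that plan --- that $G_s$, restricted to stages $\leq_1$-above $v$, returns cofinally often to a fixed configuration --- is precisely the kind of statement the lemma asks you to prove, and you concede that the supporting ``finite-injury bookkeeping'' (re-added transients, rollbacks that remove still-appearing tuples) is an unresolved obstacle. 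As written, the proof is genuinely incomplete.

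The gap closes with far less machinery, and the dichotomy never needs to arise, once you exploit one fact you did not use: if $v$ is a true stage, then every tuple $(x,\ell,n) \in G_v$ appears, with $n$ as least witness, at \emph{every} stage $t > v$. (Since $G_v \subseteq G_u$ for all $u > v$, the construction's invariant gives $(x,n,m) \in X_\ell^{\texttt{c}}$ for all $m < u$ and all such $u$, so the witness $n$ never dies; and witnesses below $n$ that were dead when the tuple was added never revive.) Now suppose $v+1$ is not itself true, and let $t_1$ be least with $v+1 \nleq_1 t_1$. Stage $t_1$ must be a rollback stage, since otherwise $G_{t_1} \supseteq G_{t_1-1} \supseteq G_{v+1}$ by minimality of $t_1$. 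The rollback lands on the greatest $t'' < t_1$ all of whose tuples still appear at stage $t_1$; by the fact above $t'' \geq v$, and $t''$ cannot lie strictly between $v$ and $t_1$, since then minimality of $t_1$ would give $G_{t_1} = G_{t''} \supseteq G_{v+1}$, contradicting $v+1 \nleq_1 t_1$. Hence $G_{t_1} = G_v$, so $G_{t_1} \subseteq G_u$ for every $u > t_1$ because $v$ is true: $t_1$ is itself a true stage above $v$. This is exactly the paper's argument (phrased there with $v$ taken to be a purported greatest true stage), and it replaces your entire Case 2 --- the appearance-interval analysis, the ``oldest dead transient'' principle, and the finite-injury bookkeeping --- with a single observation about where the rollback is forced to land.
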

\begin{proof}
Assume that there is a greatest true stage $s$. There is some least $t$ such that $s+1 \nleq_1 t$. Since $s$ is a true stage, $G_s \subseteq G_{s+1},G_t$. By choice of $t$, $G_{s+1} \nsubseteq G_t$; by the minimality of $t$, $G_{s+2},\ldots,G_{t-1} \nsubseteq G_t$ as well. Then we see from the construction that $G_t = G_s$. Thus $t \leq_1 u$ for all $u > t$, contradicting the choice of $s$.
\end{proof}

We call the sequence $s_0 < s_1 < s_2 < \cdots$ of true stages the \textit{true path} of the construction.

\begin{lemma}
If $s$ is a true stage, and $t \leq_1 s$, then $t$ is also a true stage.
\end{lemma}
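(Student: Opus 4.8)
The plan is to verify directly that $t$ satisfies the definition of a true stage, namely that $t \leq_1 u$ for every $u > t$. Since $t \leq_1 s$ is given, the entire task is to propagate this relation to every stage $u$ beyond $t$. I would do this by a case analysis according to whether $u$ lies at or below $s$, or strictly above $s$.

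First I would dispose of the stages $u$ with $t < u \leq s$. When $u = s$ there is nothing to prove, since $t \leq_1 s$ is exactly the hypothesis. When $t < u < s$, I would invoke Lemma \ref{lem:leq1} with the triple $(t,u,s)$: from $t < u < s$ together with $t \leq_1 s$ we conclude $t \leq_1 u$. This is the only place the earlier lemma is used, and the role it plays is precisely to pull the containment $G_t \subseteq G_s$ back to an intermediate stage, yielding $G_t \subseteq G_u$.

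Next I would handle $u > s$. Here I would use that $s$ is a true stage, so $s \leq_1 u$, i.e.\ $G_s \subseteq G_u$. Combined with $t \leq_1 s$ (so $t \leq s$ and $G_t \subseteq G_s$), the transitivity of $\leq_1$ gives $t \leq u$ and $G_t \subseteq G_u$, hence $t \leq_1 u$. Transitivity is immediate, since $\leq_1$ is just the relation $\leq$ on stages conjoined with set-inclusion of the guesses $G_s$, and both of these compose. Taking the two cases together establishes $t \leq_1 u$ for all $u > t$, which is exactly the assertion that $t$ is a true stage.

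I do not expect any serious obstacle; the argument is a short case split resting on Lemma \ref{lem:leq1} and transitivity. The one point requiring a little care is keeping the direction of Lemma \ref{lem:leq1} straight—it concludes a relation to the \emph{intermediate} stage, which is exactly what the sub-case $t < u < s$ needs—and observing that the boundary case $t = s$ is automatically subsumed, since then every $u > t$ falls under the second case.
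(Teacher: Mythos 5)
Your proof is correct and follows essentially the same route as the paper's: Lemma \ref{lem:leq1} handles the stages strictly between $t$ and $s$, and transitivity of $\leq_1$ (together with $s$ being a true stage) handles the stages beyond $s$. The case split you make explicit is exactly what the paper's one-line argument does implicitly.
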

\begin{proof}
Suppose that $t \leq_1 s$. Then, by Lemma \ref{lem:leq1}, $t \leq_1 s^*$ for all $s^*$ with $t \leq s^* \leq s$; and since $s \leq_1 s^*$ for all $s^* \geq s$, $t \leq_1 s^*$ for all $s^* \geq t$.
\end{proof}

Define $X^s_\ell = \{x \mid (\exists n)\,(x,\ell,n) \in G_s\}$. Note that if $s \leq_1 t$, then $X^s_\ell \subseteq X^t_\ell$. The next lemma will show that the set $X_\ell$ is the union, along the true stages, of the sets $X^s_\ell$.

\begin{lemma}
$X_\ell = \bigcup_{i \in \omega} X^{s_i}_\ell$.
\end{lemma}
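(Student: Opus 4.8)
The plan is to prove the two inclusions separately, with $\bigcup_i X^{s_i}_\ell \subseteq X_\ell$ being essentially immediate and the reverse inclusion carrying all the weight. For the easy direction, suppose $x \in X^{s_i}_\ell$, so that $(x,\ell,n) \in G_{s_i}$ for some $n$. Since $s_i$ is a true stage, $G_{s_i} \subseteq G_t$ for every $t \geq s_i$, so $(x,\ell,n) \in G_t$ for all such $t$. The defining invariant of the guesses guarantees that $(x,n,m) \in X_\ell^{\texttt{c}}$ for every $m < t$; letting $t$ grow gives $(\forall m)\,(x,n,m) \in X_\ell^{\texttt{c}}$, i.e.\ $x \in X_\ell$.

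For the reverse inclusion I would fix $x \in X_\ell$ and let $n_0$ be the least $n$ with $(\forall m)\,(x,n,m)\in X_\ell^{\texttt{c}}$. The key preliminary observation is a monotonicity fact about witnesses: for a fixed pair $(y,\ell')$, the condition ``$n$ works at stage $s$'' (meaning $(y,n,m)\in X_{\ell'}^{\texttt{c}}$ for all $m<s$) only becomes harder as $s$ increases, so the least working witness $w(s)$ is non-decreasing in $s$. Applied to $x$, this shows there is a stage $M$ past which $x$ appears to be labeled $\ell$ with the stable witness $n_0$ at every stage. The goal is then to show that $(x,\ell,n_0)$ enters $G_{s_i}$ for some true stage $s_i$, whence $x \in X^{s_i}_\ell$.

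The hard part, and the main obstacle, is controlling the finitely many tuples that precede $(x,\ell,n_0)$ in the fixed enumeration and could block it from being the least newly-appearing tuple that the construction adds. Here I would again exploit monotonicity of $w(s)$: the set of stages at which a fixed witness $n'$ is the least working one for $(y,\ell')$ is an interval, so any tuple that does not appear cofinitely actually appears at only finitely many stages. Hence there is a stage $M''$ after which no preceding, non-cofinitely-appearing tuple ever appears again. Running an induction on position in the enumeration, every preceding tuple that does appear cofinitely already lies in $G_\infty := \bigcup_i G_{s_i}$, so in $G_{s_a}$ for some true stage $s_a$, and stays there because the $G_{s_i}$ are increasing along the true path.

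To conclude I would pick a true stage $s_b \geq \max(M, M'', s_a)$. At stage $s_b+1$ there is no mistake, since $s_b$ is true forces $G_{s_b}\subseteq G_{s_b+1}$ with every element of $G_{s_b}$ still appearing; so the construction adds the least newly-appearing tuple, and by the arrangement above the only eligible candidate at a position $\leq$ that of $(x,\ell,n_0)$ is $(x,\ell,n_0)$ itself. A short persistence argument finishes it: any later backtrack removing $(x,\ell,n_0)$ would have to retreat to a stage $t$ below its addition, but $G_{s_b}\cup\{(x,\ell,n_0)\}$ appears in full at every later stage (using $s_b$ true and that $x$ stably appears past $M$), so the greatest admissible $t$ cannot drop below $s_b+1$. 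Thus $(x,\ell,n_0)\in G_u$ for all $u\geq s_b+1$, which makes $s_b+1$ itself a true stage witnessing $x\in\bigcup_i X^{s_i}_\ell$. I expect the monotone-witness observation and the no-mistake-after-a-true-stage step to be the two ideas that make the blocking problem tractable.
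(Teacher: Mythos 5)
Your proof is correct and takes essentially the same route as the paper's: the easy inclusion via the invariant on $G_s$ together with persistence along true stages, and the hard inclusion by induction along the tuple enumeration (the paper phrases this as a least-counterexample argument), using the dichotomy that each earlier tuple either appears at only finitely many stages or appears cofinitely and is then already in $G$ along the true path, so that $(x,\ell,n_0)$ is the least eligible tuple at stage $s_b+1$ for a large true stage $s_b$, gets added there, and makes $s_b+1$ itself a true stage. Your monotone-witness/interval observation is the same fact the paper records as a remark when setting up the guessing machinery, and your persistence argument fills in a step the paper asserts without detail.
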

\begin{proof}
Note that if $x \notin X_\ell$, then for all $n$, there is $m$ such that $(x,n,m) \notin X_\ell^{\mathtt{c}}$. Fix $n$, and let $m$ be such that $(x,n,m) \notin X_\ell^{\mathtt{c}}$. Thus, for all stages $s > m$, $(x,\ell,n) \notin G_s$; so, for any true stage $t$, $(x,\ell,n) \notin G_t$. Since this is true for all $n$, $x \notin X^s_\ell$ for any true stage $s$.

On the other hand, suppose that $x \in X_\ell$, but for all true stages $s$, $x \notin X^s_\ell$. Since $x \in X_\ell$, for some $n$, for all $m$ we have $(x,n,m) \in X_\ell^{\mathtt{c}}$. And since $x \notin X^s_\ell$ for all true stages $s$, $(x,\ell,n) \notin G_s$ for all true stages $s$. We may assume that $(x,\ell,n)$ is the least such tuple. For some true stage $s$, for all $(x',\ell',n')$ less than $(x,\ell,n)$ in our chosen enumeration, we will either have that $x'$ does not appear to be labeled $\ell'$ as witnessed by $n'$ at all true stages after $s$ (and so $(x',\ell',n')$ can never be in $G_t$ for any $t \geq s$) or that $x' \in X_{\ell'}$ (with least witness $n'$) and $(x',\ell',n') \in G_s$ (so that $(x',\ell',n') \in G_t$ for all $t > s$). So $x$ appears to be labeled $\ell$ as witnessed by $n$ at all stages after $s$. Then at stage $s+1$, we have $G_{s+1} = G_s \cup \{(x,\ell,n)\}$ and $s+1$ is a true stage. So $x \in X^{s+1}_\ell$, a contradiction.
\end{proof}

We will say that a node or $T$-element $x$ from $\mc{D}[s]$ is \textit{labeled $\ell$ (at stage $s$)} if $x \in X^s_\ell$.

\subsection{\texorpdfstring{$\exists \forall$ Marker extensions}{Marker extensions}}

Given a structure $\mc{A}$ together with a relation $X$ on $\mc{A}$, we will describe how to make a certain kind of Marker extension of $(\mc{A},X)$. We will define a three-sorted structure $M(\mc{A},X)$ whose first sort is a copy of the structure $\mc{A}$. Let $n$ be the arity of $X$. We will refer to the sorts as $\mc{A}$, $\mc{S}_1$, and $\mc{S}_2$. The language of $M(\mc{A},X)$ will be the language of $\mc{A}$ augmented with functions $f \colon \mc{S}_1 \to \mc{A}^n$ and $g \colon \mc{S}_2 \to \mc{S}_1$ and a unary relation $R \subseteq \mc{S}_2$.

For each element $\bar{x} \in \mc{A}^n$, there will be infinitely many elements $y$ of the second sort $\mc{S}_1$ with $f(y) = \bar{x}$. Each element of $\mc{S}_1$ will be the pre-image, under $f$, of some $\bar{x} \in \mc{A}^n$. For each element $y$ of $\mc{S}_1$, there will be infinitely many elements $z \in \mc{S}_2$ with $g(z) = y$, and each element of $\mc{S}_2$ will be the pre-image, under $g$, of some $y \in \mc{S}_1$.

For every $\bar{x} \in \mc{A}^n$, there will be infinitely many $y \in f^{-1}(\bar{x})$ such that there are infinitely many $z \in g^{-1}(y)$ with $R(z)$, and infinitely many $z \in g^{-1}(y)$ with $\neg R(z)$.
If $\bar{x} \notin X$, this will be the case for all $y \in f^{-1}(\bar{x})$, but if $\bar{x} \in X$, then there will also be infinitely many $y \in f^{-1}(\bar{x})$ such that for all $z \in g^{-1}(y)$, $R(z)$.

\begin{lemma}\label{lem:mark-def}
$X$ is definable in $M(\mc{A},X)$ by an $\exists \forall$ formula.
\end{lemma}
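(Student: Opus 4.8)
The plan is to write down the evident $\exists\forall$ formula defining $X$, in direct analogy with the formula used in Lemma~\ref{lem:ea-def}, and then read off its correctness from the two defining clauses of the construction of $M(\mc{A},X)$.

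First I would take the defining formula to be
\[ \varphi(\bar{x}) \equiv (\exists y \in \mc{S}_1)\left[ f(y) = \bar{x} \wedge (\forall z \in \mc{S}_2)\bigl(g(z) = y \rightarrow R(z)\bigr) \right]. \]
Here the clause $f(y) = \bar{x}$ is to be read as the quantifier-free conjunction $\bigwedge_{i=1}^n f_i(y) = x_i$, where we model the map $f \colon \mc{S}_1 \to \mc{A}^n$ by its coordinate functions $f_1,\ldots,f_n \colon \mc{S}_1 \to \mc{A}$. With this reading, $\varphi$ consists of a single existential quantifier over $\mc{S}_1$ followed by a single universal quantifier over $\mc{S}_2$, so it has the form $\exists\forall$, as required.

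To verify that $\varphi$ defines $X$, I would argue the two directions separately, using only the defining properties of $M(\mc{A},X)$. If $\bar{x} \in X$, then by construction there are infinitely many elements $y \in f^{-1}(\bar{x})$ such that $R(z)$ holds for all $z \in g^{-1}(y)$; any one such $y$ witnesses $\varphi(\bar{x})$. Conversely, if $\bar{x} \notin X$, then by construction every $y \in f^{-1}(\bar{x})$ has both some $z \in g^{-1}(y)$ with $R(z)$ and some $z \in g^{-1}(y)$ with $\neg R(z)$; in particular the inner clause $(\forall z)(g(z)=y \to R(z))$ fails for every $y \in f^{-1}(\bar{x})$, so there is no witness to the outer existential and $\varphi(\bar{x})$ is false. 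Since every element of $\mc{S}_1$ lies in some fibre $f^{-1}(\bar{x})$, these two cases are exhaustive, and $\varphi$ defines $X$.

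There is no substantive obstacle here; the only point requiring a little care is to ensure that $f(y) = \bar{x}$ is expressed without quantifiers (via the coordinate functions $f_i$), so that the overall formula genuinely has a single block of existential quantifiers followed by a single block of universal quantifiers and thus counts as $\exists\forall$.
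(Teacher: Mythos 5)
Your proposal is correct and takes exactly the same approach as the paper: the paper's proof consists of just the formula $(\exists y)\left[ f(y) = \bar{x} \wedge (\forall z)\left[ g(z) = y \rightarrow R(z) \right] \right]$, and your verification of both directions from the defining clauses of $M(\mc{A},X)$ is the routine check the paper leaves implicit. (Incidentally, your version fixes a typo in the paper, which writes $f(z) = y$ where $g(z) = y$ is meant.)
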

\begin{proof}
$X$ is defined by the formula
\[ \bar{x} \in X \Longleftrightarrow (\exists y) \left[ f(y) = \bar{x} \wedge (\forall z) \left[ f(z) = y \rightarrow R(z) \right] \right]. \qedhere\]
\end{proof}

\begin{lemma}\label{lem:mark-comp}
If $\mc{A}$ is computable and $X$ is $\Sigma^0_2$, then we can build a computable copy of $M(\mc{A},X)$ uniformly in $\mc{A}$ and $X$.
\end{lemma}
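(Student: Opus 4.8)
The plan is to imitate the computable construction of the $\Sigma^0_1$-Marker extension in Lemma~\ref{lem:lab-comp}, replacing the use of an enumeration of a c.e.\ set by an approximation to a $\Pi^0_1$ witness for the $\Sigma^0_2$ set $X$, exactly as in the passage from Lemma~\ref{lem:lab-comp} to Lemma~\ref{lem:s2-comp}. First I would rewrite $X$ in the form $\bar{x} \in X \iff (\exists k)\,[(\bar{x},k) \in X^\Pi]$, where $X^\Pi$ is a $\Pi^0_1$ set obtained uniformly from a $\Sigma^0_2$ index for $X$, and, using the same trick as in Lemma~\ref{lem:s2-comp}, arranged so that for each $\bar{x} \in X$ there is a unique such $k$. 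Being $\Pi^0_1$, the set $X^\Pi$ has a computable stage approximation: writing $(\bar{x},k) \in X^\Pi[t]$ for ``$(\bar{x},k)$ still appears in $X^\Pi$ after $t$ steps'', this predicate is computable in $(\bar{x},k,t)$, holds for small $t$, and once it becomes false it stays false; moreover $(\bar{x},k) \in X^\Pi$ iff $(\bar{x},k) \in X^\Pi[t]$ for every $t$.

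With the computable copy of $\mc{A}$ in the first sort, I would let the second sort $\mc{S}_1$ consist of triples $(\bar{x},k,j)$ with $\bar{x} \in \mc{A}^n$ and $k,j \in \omega$, and set $f(\bar{x},k,j) = \bar{x}$; the third sort $\mc{S}_2$ consists of quadruples $(\bar{x},k,j,t)$ with $g(\bar{x},k,j,t) = (\bar{x},k,j)$. These domains, together with $f$ and $g$, are plainly computable and satisfy the structural requirements (infinitely many $y$ over each $\bar{x}$, infinitely many $z$ over each $y$, and the intended surjectivity of $f$ and $g$). The only thing left is to define $R$ computably so as to encode $X$. Mirroring Lemma~\ref{lem:lab-comp}, I would set $R(\bar{x},k,j,t)$ to hold iff $t$ is even, or $t$ is odd and $(\bar{x},k) \in X^\Pi[t]$.

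The verification then checks the requirements on $R$ listed before Lemma~\ref{lem:mark-def} against this definition. If $(\bar{x},k) \in X^\Pi$, the approximation never fails, so every $z$ over $(\bar{x},k,j)$ satisfies $R$; these are the special fibres, which exist over $\bar{x}$, and then in infinite supply via $j$, exactly when $\bar{x} \in X$. If $(\bar{x},k) \notin X^\Pi$, the approximation fails from some stage on, so the odd-$t$ elements eventually all fail $R$ while the even-$t$ elements always satisfy it, making $(\bar{x},k,j)$ a mixed fibre with infinitely many $z$ of each kind. Thus when $\bar{x} \in X$ the unique valid $k$ supplies the special fibres and the infinitely many failing $k$ supply the mixed fibres, while when $\bar{x} \notin X$ every $k$ fails, so there are no special fibres and every fibre is mixed. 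This is precisely the configuration demanded of $M(\mc{A},X)$, and since the whole structure is computable uniformly in the atomic diagram of $\mc{A}$ and a $\Sigma^0_2$ index for $X$, the lemma follows.

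I expect the only genuine subtlety, and hence the step to treat most carefully, to be the mixed-fibre requirement over elements $\bar{x} \in X$: one must ensure that infinitely many mixed fibres survive even once some fibres have become special. The uniqueness of the $\Pi^0_1$ witness $k$ settles this, since it leaves infinitely many indices $k$ whose fibres fail; alternatively, one may simply adjoin to $\mc{S}_1$, for each $\bar{x}$, an extra computable family of fibres on which $R$ holds exactly at even $t$, guaranteeing mixed fibres unconditionally.
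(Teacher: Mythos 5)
Your proof is correct and takes essentially the same approach as the paper: the paper also builds the two extra sorts from tuples indexed over $\omega$ and defines $R$ directly from a computable normalization of the $\Sigma^0_2$ definition of $X$, merely folding the multiplicity requirements into the normal form (infinitely many witnesses $y$ when $\bar{x} \in X$, and infinitely many counterexamples $z$ for every non-witness $y$) rather than using your unique-$\Pi^0_1$-witness form with parity padding. If anything, your uniqueness-of-witness argument makes the existence of infinitely many mixed fibres over elements $\bar{x} \in X$ more explicit than the paper's version, which leaves that point implicit in the choice of $X^{\texttt{c}}$.
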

\begin{proof}
Let $X$ be defined by
\[ \bar{x} \in X \Longleftrightarrow (\exists y)(\forall z) \left[ (\bar{x},y,z) \in X^{\texttt{c}} \right]\]
where $X^{\texttt{c}}$ is computable and, if $\bar{x} \in X$, then there are infinitely many $y$ witnessing this (and, for all $y$, if there is $z$ with $(\bar{x},y,z) \notin X^{\texttt{c}}$, then there are infinitely many such $z$). We can find such a set $X^{\texttt{c}}$ uniformly in a $\Sigma^0_2$ index for $X$.

The copy of $M(\mc{A},X)$ we build will have the decidable copy of $\mc{A}$ in the first sort, the second sort will contain elements $(\bar{x},s)$, and the third sort will contain the elements $(\bar{x},s,t)$. We will have $f(\bar{x},s) = \bar{x}$ and $g(\bar{x},s,t) = (\bar{x},s)$. It only remains to define the relation $R$. Given $s$ and $t$, we will have $R(\bar{x},s,t)$ if and only if $(\bar{x},s,t) \in X^{\texttt{c}}$. This defines a computable copy of $M(\mc{A},X)$.
\end{proof}

\begin{lemma}\label{lem:mark-dec}
If $(\mc{A},X)$ is decidable, then we can build a decidable copy of $M(\mc{A},X)$ uniformly in the elementary diagram of $(\mc{A},X)$.
\end{lemma}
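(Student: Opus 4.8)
The plan is to mimic the quantifier-elimination argument of Lemma \ref{lem:lab-1dec}, but to exploit the \emph{full} decidability of $(\mc{A},X)$ in order to obtain full (rather than merely $1$-) decidability of the extension. The one feature of $M(\mc{A},X)$ that is not captured by a quantifier-free formula is the \emph{specialness} of an element $y$ of the second sort, namely the $\exists\forall$-type property $(\forall z)(g(z)=y \to R(z))$; call this $Q(y)$. In the computable copy of Lemma \ref{lem:mark-comp} the predicate $Q$ is only $\Pi^0_1$, which is exactly why that copy is merely computable. The key observation here is that, since $(\mc{A},X)$ is decidable, membership in $X$ is an outright decidable relation on $\mc{A}$, and so we can build a copy of $M(\mc{A},X)$ in which $Q$ is \emph{decidable}.

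First I would build this canonical copy on top of a decidable copy of $(\mc{A},X)$: for each $\bar{x}\in\mc{A}^n$ put infinitely many \emph{generic} elements of $\mc{S}_1$ above $\bar{x}$, each carrying infinitely many $z\in\mc{S}_2$ with $R(z)$ and infinitely many with $\neg R(z)$; and, only when $\bar{x}\in X$ (which we decide using the elementary diagram of $(\mc{A},X)$), additionally put infinitely many \emph{special} elements of $\mc{S}_1$ above $\bar{x}$, each carrying infinitely many $z$, all satisfying $R(z)$. Tagging the elements of $\mc{S}_1$ as generic or special makes $Q$ decidable, and deciding $\bar{x}\in X$ from the elementary diagram makes the whole construction uniform. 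This copy visibly has the required isomorphism type of $M(\mc{A},X)$.

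Next I would prove, by induction on formulas, that every formula of $M(\mc{A},X)$ is equivalent in this copy to a quantifier-free formula in the expanded language whose atoms are: equality and the functions $f,g$; the relation $R$; the predicate $Q$; and predicates $P^{\theta}$ collecting the tuples of $\mc{A}$ on which a formula $\theta$ of $(\mc{A},X)$ holds. Using surjectivity of $f$ and $g$ one reduces to eliminating a single $\exists$ over one sort. For a quantifier over $\mc{S}_2$, a fresh witness $z$ above a given $y=g(z)$ with a prescribed $R$-value exists unless one asks for $\neg R$ above a special $y$, so the elimination introduces only the condition $\neg Q(g(\cdot))$. For a quantifier over $\mc{S}_1$, a fresh witness $y$ above a prescribed $\bar{x}$ with a prescribed $Q$-value exists, because the relevant fibre is infinite, unless one asks for a special $y$ above some $\bar{x}\notin X$; this last consistency condition is exactly $\bar{x}\in X$, recorded by an appropriate $P^{\theta}$. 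A quantifier over $\mc{A}$ (including the case where the $f$-value of a new $y$ is itself new) is absorbed directly into the $P^{\theta}$ predicates using the decidability of $(\mc{A},X)$. In each case the finitely many variables already present use up only finitely much of each infinite fibre, so fresh witnesses are available precisely when the stated decidable consistency conditions hold.

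Since the predicates $P^{\theta}$, $Q$, and $R$ are all decidable in the canonical copy---$P^{\theta}$ because $(\mc{A},X)$ is decidable, and $Q$ and $R$ by construction---any quantifier-free formula in the expanded language can be decided, and hence so can any formula of $M(\mc{A},X)$; tracing through the construction shows this is uniform in the elementary diagram of $(\mc{A},X)$. The main obstacle, and indeed the whole point of passing from Lemma \ref{lem:mark-comp} to this lemma, is precisely the treatment of specialness: unlike the computable setting, where $Q$ is $\Pi^0_1$ and one is forced to let it appear only positively (as with the predicate $Q$ in Lemma \ref{lem:lab-1dec}), here the decidability of $X$ lets us fix the special/generic status of every $\mc{S}_1$-element in advance, turning $Q$ into a genuinely decidable predicate and removing the positivity restriction that limited the earlier argument to $1$-decidability.
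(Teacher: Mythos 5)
Your proposal is correct and follows essentially the same route as the paper: the paper's copy also fixes the special/generic status of each $\mc{S}_1$-element in advance (defining $R(\bar{x},s,t)$ to hold iff $t$ is odd, or $s,t$ are even and $\bar{x}\in X$, which is decidable since $(\mc{A},X)$ is decidable), and then runs the same quantifier elimination down to quantifier-free formulas over the decidable predicates $Q$ and $P^\theta$, with no positivity restriction on $Q$. Your more explicit accounting of the elimination cases and of why decidability of $X$ removes the positivity constraint matches what the paper leaves implicit by deferring to the argument of Lemma \ref{lem:lab-1dec}.
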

\begin{proof}
The copy of $M(\mc{A},X)$ we build will have the decidable copy of $\mc{A}$ in the first sort, the second sort will contain elements $(\bar{x},s)$, and the third sort will contain elements $(\bar{x},s,t)$. We will have $f(\bar{x},s) = \bar{x}$, and $f(\bar{x},s,t) = (\bar{x},s)$. Define $R(\bar{x},s,t)$ if $t$ is odd, or if $s$ and $t$ are even and $\bar{x} \in X$.

We claim that this is decidable. Given a tuple $\bar{a} \in M(\mc{A},X)$ and a formula $\varphi(\bar{x})$, we want to decide whether $M(\mc{A},X) \models \varphi(\bar{a})$. First, we may rewrite $\varphi$ in the language where we replace the language of $\mc{A}$ with the predicates
\[ P^{\theta(y_1,\ldots,y_n)} = \{ (a_1,\ldots,a_n) \in \mc{A}^n : \mc{A} \models \varphi(a_1,\ldots,a_n) \} \]
where $\theta$ is a formula, possibly involving quantifiers, in the language of $\mc{A}$.

We will show that $\varphi(\bar{x})$ is equivalent, in $M(\mc{A},X)$, to a quantifier-free formula $\psi(\bar{x})$ in an expanded language with the predicate
\[Q = \{ (\bar{x},s) \in \mc{S}_1 : \text{$s$ is odd, $t$ is odd, or $\bar{x} \notin X$} \} \]
and the predicates $P^{\theta(y_1,\ldots,y_n)}$, where $\theta$ is now allowed to contain the predicate $R$.
Note that the predicates $Q$ and $P^\theta$ are computable in $M(\mc{A},X)$, and so we can decide whether $M(\mc{A},X) \models \psi(\bar{a})$, and hence whether $M(\mc{A},X) \models \varphi(\bar{a})$.

Arguing by induction, it suffices to show that if $\varphi(x_1,\ldots,x_n)$ is a quantifier-free formula possibly involving $Q$ and $P^\theta$ (where $\theta$ may involve $R$), $(\exists x_n) \varphi(x_1,\ldots,x_n)$ is equivalent in $M(\mc{A},X)$ to a quantifier-free formula $\psi(x_1,\ldots,x_{n-1})$. The argument is essentially the same as Lemma \ref{lem:lab-1dec}, though $f(g(x_n))$ is now a tuple rather than a single element.
\end{proof}

\subsection{Overview of the construction}

As before, fix a $\Sigma^1_1$ set $S$. Given $\mc{D}$ a 2-decidable structure, we want to build a structure $\mc{A}$ so that, if $n \in S$, we can uniformly build a decidable copy of $\mc{A}$, and if $n \notin S$, then $\mc{A}$ is not isomorphic to $\mc{D}$. (We could have taken $\mc{D}$ to be decidable, but by taking it to be 2-decidable we will simultaneously prove the $n\geq 2$ case of Theorem \ref{thm:d-pres}. See Section \ref{sec:sec-part}.)

The structure $\mc{A}$ we construct will be of the form $[M(\mc{B},\preceq)]^X$, where $\preceq$ is a binary relation and $X = (X_\ell)_{\ell \in \mc{L}}$ is a sequence of subsets of $\mc{B}$ (and hence of $M(\mc{B},\preceq)$). We will build $\mc{B}$ in a computable way, with $\preceq$ and the sets $X_\ell$ defined via $\Sigma^0_2$ approximations. By Lemmas \ref{lem:s2-comp} and \ref{lem:mark-comp}, $\mc{A} = [M(\mc{B},\preceq)]^X$ will be computable. To see that if $n \in S$ then $\mc{A}$ has a decidable presentation, we will use Lemmas \ref{lem:s2-dec} and \ref{lem:mark-dec}. If $\mc{D}$ is a total 2-decidable structure which is isomorphic to $\mc{A}$, then $\mc{D} = [M(\mc{E},\precsim)]^Y$ where $\precsim$ is a binary relation on $\mc{E}$ and $Y = (Y_\ell)_{\ell \in \mc{L}}$ is a sequence of subsets of $\mc{E}$. Since $\mc{D}$ is 2-decidable, by Lemma \ref{lem:mark-def}, $\precsim$ is computable. However, the sets $Y_\ell$ may not be computable; so we will have to use the approximations from the Section \ref{sec:guesses}. Recall from that section that we can find a sequence of computable sets $Y^s_\ell$ such that, if $s_0 < s_1 < s_2 < \cdots$ are the true stages, $Y_\ell = \bigcup_{i \in \omega} Y^{s_i}_\ell$. Recall also that we say that a node or $T$-element $x$ from $\mc{D}[s]$ is \textit{labeled $\ell$ (at stage $s$)} if $x \in Y^s_\ell$. Thus, the labels which hold at any true stage are actual labels of elements of $\mc{D}$.

We will describe how to build the structure $\mc{B}$, together with $\Sigma^0_2$ approximations of $\preceq$ and the sets $X = (X_\ell)_{\ell \in \mc{L}}$; for this latter sequence, we will simply talk about labeling elements of $\mc{B}$ by a label $\ell$, by which we mean that we put that element into the set $X_\ell$. Then we will set $\mc{A} = [M(\mc{B},\preceq)]^X$.

The structure $\mc{B}$ will have nodes $\rho$, $\sigma_i^{\Box}$, and $\tau_{i,s}^{\Box}$, each of which has attached to it a copy $T_\nu$ of $\mc{C}_n$ or $\mc{C}_{\infty}$, and a linear order $W_\nu$ which is isomorphic to $\omega$, $\omega^*$, or $\omega^* + \omega$. The linear orders $W_\nu$ will given by the binary relation $\preceq$ with respect to which we take the Marker extension; thus the linear orders are not themselves in the language of $\mc{B}$, but rather are definable by an $\exists\forall$ formula in $M(\mc{B},\preceq)$.

We again have infinitely many labels $\ell_k$ and a distinguished label $L$, but now these labels will be $\Sigma^0_2$ over the elementary diagram of $\mc{A}$.

\subsection{Acting for a guess}

At each stage $s$, our construction will build a partial structure $\mc{B}[s]$, together with a binary relation $\preceq_s$ and labels $\ell_k^s$ and $L^s$.

If $s < t$, then $\mc{B}[t]$ will extend $\mc{B}[s]$. It will not necessarily be true that if $x$ is labeled $\ell$ at stage $s$, then it will be labeled $\ell$ at stage $t$, or that if $x \preceq_s y$, then $x \preceq_t y$. If, in fact, $s \leq_1 t$, then $\preceq_s$ will extend $\preceq_t$, and anything labeled $\ell$ at stage $s$ will still be labeled $\ell$ at stage $t$.

Note that by Lemma \ref{lem:leq1}, if $s < t < u$, $s,t \leq_1 u$, then $s \leq _1 t$. Thus, this last requirement need only be checked at stage $u$ for the greatest $t < u$ with $t \leq_1 u$.

\medskip{}

\noindent \textit{Stage $0$.} Begin at stage $0$ with $\mc{B}[0]$ as follows. In $\mc{B}[0]$, there will be nodes $\rho$ and $\sigma_i^{\Box}$ for $\Box \in \{\leftmapsto,\mapsto,\leftrightarrow\}$. We have $T_\rho = \mc{C}_n$ and $T_{\sigma_i^\Box} = \mc{C}_{\infty}$. We put a single element in $W_\rho$, and in $W_{\sigma_i^\Box}$ we put a linear order $\preceq$ isomorphic to either $\omega$ or $\omega^*$, depending on whether $\Box$ is $\mapsto$ or $\leftmapsto$.

Unlike before, we will not immediately add infinitely many nodes $\tau_{i,0}^{\Box}$, but rather will ``schedule'' two such nodes (one for each of $\Box = \mapsto$ and $\Box = \leftmapsto$) to be added at each stage. We do, at stage $0$, create an infinite \textit{reserve} of nodes which will, at some later stage, become one of the $\tau_{i,s}^{\Box}$. To each of these nodes $\nu$ in the reserve, we have $T_\nu$ be a copy of $\mc{C}_n$, and $W_\nu$ a linear order isomorphic to $\omega$ for half of the nodes, and $\omega^*$ for the other half.

To each node or $T$-element $x$ associated to a node $\rho$ or $\sigma_i^{\Box}$, we choose two unique labels $\ell_1$ and $\ell_2$, as primary and secondary labels, and label $x$ with them.

Set $\extent(0) = 0$.

\medskip{}

\noindent \textit{Action at stage $s$.} Let $s_1,\ldots,s_n < s$ be the previous stages with $s_i \leq_1 s$. We say that these stages are the \textit{$s$-true stages}, and if they were expansionary stages, then we say that they are \textit{$s$-true expansionary stages}. Let $s^*$ be the last $s$-true expansionary stage.

At stage $s$, so far we have built $\mc{B}[s-1]$, $\preceq_{s-1}$ and certain labels $\ell^{s-1}$ on $\mc{B}[s-1]$. The first thing we need to do is to fix any errors that we may have made since the stage $s_n$. So we begin stage $s$ with the order $\preceq_{s_n}$ and only the labels which held at stage $s_n$; any changes to $\preceq$ or the labels after stage $s_n$ and up to, and including, stage $s-1$ are discarded. Also, return all of the nodes $\tau_{i,s'}^{\Box}$, for $s_n < s' < s$, to the reserve.

Now we need to decide whether the stage $s$ is expansionary. Stage $s$ is expansionary if there are:
\begin{enumerate}
	\item nodes $\nu_0,\ldots,\nu_r$ of $\mc{A}[s_n-1]$, containing among them the first $\extent(s^*)$ nodes of $\mc{A}$;
	\item $T$-elements $\bar{a}_0 \in T_{\nu_0},\ldots,\bar{a}_r \in T_{\nu_r}$, containing among them the first $\extent(s^*)$ elements of each of these components;
	\item nodes $\mu_0,\ldots,\mu_r$ of $\mc{B}[s]$, containing among them the first $\extent(s^*)$ nodes of $\mc{D}$; and
	\item $T$-elements $\bar{d}_0 \in T_{\mu_0},\ldots,\bar{d}_r \in T_{\mu_r}$, containing among them the first $\extent(s^*)$ elements of each of these components
\end{enumerate}
such that:
\begin{itemize}
	\item the atomic types of $\nu_0,\ldots,\nu_r;\bar{a}_0,\ldots,\bar{a}_r$ in $\mc{A}[s_n-1]$ and $\mu_0,\ldots,\mu_r;\bar{d}_0,\ldots,\bar{d}_r$ in $\mc{D}[s_n]$ are the same, and
	\item each of the elements from $\nu_0,\ldots,\nu_r;\bar{a}_0,\ldots,\bar{a}_r$ has the same labels in $\mc{A}[s_n-1]$ as the corresponding elements from $\mu_0,\ldots,\mu_r;\bar{d}_0,\ldots,\bar{d}_r$ have in $\mc{D}[s_n]$.
\end{itemize}
Otherwise, stage $s$ is not expansionary. If stage $s$ is expansionary, let $\extent(s) \geq \extent(s^*) + 1$ be large enough that $\nu_0,\ldots,\nu_r$ are among the first $\extent(s)$ nodes of $\mc{A}$, $\bar{a}_0,\ldots,\bar{a}_r$ are among the first $\extent(s)$ elements of their components, $\mu_0,\ldots,\mu_r$ are among the first $\extent(s)$ nodes of $\mc{D}$, and $\bar{d}_0,\ldots,\bar{d}_r$ are among the first $\extent(s)$ elements of their components.

If stage $s$ is expansionary, then continue by \textit{updating the target} followed by \textit{renewing labels} as described below. If the stage $s$ is not expansionary, the target and direction are the same as they were at the last $s$-true expansionary stage.

At all stages, expansionary or not, we finish by adding a new element to the linear order $\preceq$ in $W_\rho$. In $\mc{B}[s_n]$, finitely many of the elements of $W_\rho$ are bear some relation $\preceq$, and these are linearly ordered. If $\direction(s) = \RIGHT$, pick the least element $x$ of $W_\rho$ which does not bear any such relation, and put this new element to the right of the linear order we have built so far. Otherwise, if $\direction(s) = \LEFT$, do the same but add the new element to the left. This defines $\preceq_s$.

Let $s^*$ be the last $s$-true expansionary stage. Take two nodes, which we call $\tau_{s,s^*}^{\leftmapsto}$ and $\tau_{s,s^*}^{\mapsto}$, from the reserve (with $W_\eta$ isomorphic to $\omega^*$ and $\omega$ respectively). Label these with the same labels as $\rho$.

\medskip{}

\noindent \textit{Updating the target.} In $\mc{D}[s]$, find the least node, if one exists, which is labeled exactly by the labels of $\rho$ (and not by $L$). Set $\target(s)$ to be this node. (If no such element exists, $\target(s)$ is undefined and $\direction(s) = \RIGHT$.)

Now, look at the linear order $W_{\target(s)}$. If it has a greatest element, set $\direction(s) = \RIGHT$. Otherwise, set $\direction(s) = \LEFT$. We can recognize whether an element $x$ of $W_{\target(s)}$ is the greatest element by asking whether for all $y$, $x \npreceq y$ (where $x \npreceq y$ is definable by a $\forall\exists$ formula by Lemma \ref{lem:mark-def}). This is a $\forall\exists$ fact, and so we can ask the 2-diagram of $\mc{D}$.

\medskip{}

\noindent \textit{Renewing labels.} Recall that $s^*$ was the previous expansionary stage. First, apply the label $L$ to each node $\tau_{i,s^*}^\Box$. Second, each of the nodes $\rho$ and $\sigma_i^{\Box}$ and their $T$-elements have two labels which only of themselves and which are not in the bag. Add each of the primary labels to the bag. The secondary labels become the primary labels. Then, label each of these elements with each label from the bag along with a new unique secondary label.

\subsection{Verification}

Let $s_0 < s_1 < s_2 < \cdots$ be the true path of the approximation of the labels of $\mc{D}$, i.e., of $Y = (Y_\ell)_{\ell \in \mc{L}}$. During the construction, we defined a computable structure $\mc{B} = \bigcup_s \mc{B}[s]$. We also defined a $\Sigma^0_2$ relation $\preceq\, = \bigcup_{i \in \omega} \preceq_{s_i}$ along the true stages, and a sequence of $\Sigma^0_2$ subsets $X = (X_\ell)_{\ell \in \mc{L}}$ of $\mc{B}$, where $X_\ell = 
\bigcup_{i \in \omega} X^{s_i}_\ell$ and $X^{s}_\ell$ consists of the elements of $\mc{B}$ which were labeled $\ell$ at stage $s$. To see that $\preceq$ and the $X_\ell$ are in fact $\Sigma^0_2$ sets, note that the set of true stages is a $\Pi^0_1$ subset of $\omega$: $s$ is a true stage if and only if, for all $t > s$, $s \leq_1 t$. Then, for example, $x \preceq y$ if and only if there is a true stage $s$ such that $x \preceq_s y$. Thus we can define $\mc{A} = [M(\mc{B},\preceq)]^X$.

The construction ends up being essentially the same as the 1-decidable case along the true stages. Note that $\preceq$ and $X$ were defined so that:
\begin{enumerate}
	\item An element $x \in \mc{B}$ labeled $\ell$ in $\mc{A}$ if and only if it was labeled $\ell$ at some true stage $s$.
	\item A pair of elements $x,y \in \mc{B}$ have $x \preceq y$ if and only if $x \preceq_s y$ at some true stage $s$.
\end{enumerate}
The proofs of the following lemmas end up being almost exactly the same as proofs of the corresponding lemmas in the 1-decidable case, except that we talk only about true stages. We will repeat the statements of the lemmas, with the modifications to refer only to true stages.

\begin{lemma}
$(W_\rho,\preceq)$ is isomorphic to either $\omega$, $\omega^*$, or $\omega^* + \omega$. These three cases correspond, respectively, to having $\direction(s) = \RIGHT$ for all but finitely many true stages $s$, $\direction(s) = \LEFT$ for all but finitely many true stages $s$, and $\direction(s) = \RIGHT$ and $\direction(s) = \LEFT$ for infinitely many true stages $s$ each.
\end{lemma}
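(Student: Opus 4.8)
The plan is to mirror the (essentially trivial) proof of the corresponding lemma in the $1$-decidable case, the one new ingredient being that the relation $\preceq$ on $W_\rho$ is assembled cumulatively \emph{along the true stages}, so I must track the rollback mechanism carefully. Fix the true path $s_0 < s_1 < s_2 < \cdots$, where $s_0 = 0$ contributes the single initial element of $W_\rho$.

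First I would pin down what happens at two consecutive true stages. If $s = s_i$ is a true stage, then its last $s_i$-true stage is exactly $s_{i-1}$: every earlier true stage $t < s_i$ satisfies $t \leq_1 s_i$ (by definition of a true stage), and conversely any $t < s_i$ with $t \leq_1 s_i$ is itself a true stage by the lemma of Section \ref{sec:guesses} stating that $t \leq_1 s$ with $s$ true forces $t$ true. Hence the $s_i$-true stages are exactly $s_0,\ldots,s_{i-1}$, with last element $s_{i-1}$. Consequently, at stage $s_i$ the construction first rolls back to $\preceq_{s_{i-1}}$ and then adds a single new element, so $\preceq_{s_i}$ extends $\preceq_{s_{i-1}}$ by exactly one placed element. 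Since $\preceq = \bigcup_i \preceq_{s_i}$, exactly one element of $W_\rho$ is placed at each true stage and the $\preceq_{s_i}$ form a nested chain of finite linear orders.

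Next I would verify that $\preceq$ linearly orders all of $W_\rho$. At each true stage we place the least element of $W_\rho$ not yet bearing $\preceq$; since there are infinitely many true stages and placed elements remain placed along the true path, an induction on the position of an element shows every element of $W_\rho$ is eventually placed. Thus $\preceq$ is a linear order on the whole of $W_\rho$. Finally I would read off the order type from the sequence $\direction(s_0),\direction(s_1),\ldots$. Each placement inserts its element at the extreme right (if $\direction(s_i) = \RIGHT$) or extreme left (if $\direction(s_i) = \LEFT$) of the order built so far, so at every stage the order has the shape: left-additions in reverse order of placement, then the initial element, then right-additions in order of placement. If $\direction(s_i) = \RIGHT$ for all but finitely many true stages, the finitely many left-additions give a finite initial segment and the cofinally many right-additions give an $\omega$-tail, so $(W_\rho,\preceq) \cong \omega$; symmetrically, $\direction(s_i) = \LEFT$ cofinitely gives $\omega^*$; and if both occur infinitely often the left-additions form an $\omega^*$ and the right-additions an $\omega$, whence $(W_\rho,\preceq) \cong \omega^* + 1 + \omega = \omega^* + \omega$.

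The main obstacle is the first step. Unlike the $1$-decidable construction, where one element is added permanently at every stage, here elements are added to $\preceq$ at \emph{every} stage but those added at non-true stages are discarded by the rollback. I must therefore use the combinatorics of the true path from Section \ref{sec:guesses} (in particular Lemma \ref{lem:leq1} and the identification of $s_{i-1}$ as the reference stage to which $s_i$ rolls back) to confirm that the true-stage contributions accumulate without gaps, so that no element of $W_\rho$ is skipped and $\preceq$ genuinely becomes a total order. Once this bookkeeping is settled, the order-type computation is identical to the $1$-decidable case.
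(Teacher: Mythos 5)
Your proof is correct and follows essentially the same route as the paper, which simply transfers the one-line argument of the $1$-decidable case (one element added to $W_\rho$ per stage, on the side given by the direction) to the true stages. Your additional bookkeeping --- that a true stage $s_i$ rolls back exactly to $\preceq_{s_{i-1}}$, so the $\preceq_{s_i}$ nest and every element of $W_\rho$ is eventually placed --- is precisely the detail the paper leaves implicit.
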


We say that a true stage which is also an expansionary stage is a \textit{true expansionary stage}.

\begin{lemma}\label{lem:inf-exp2}
If $\mc{A}$ is isomorphic to $\mc{D}$, then there are infinitely many true expansionary stages.
\end{lemma}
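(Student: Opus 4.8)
The plan is to mirror the proof of Lemma~\ref{lem:inf-exp} from the 1-decidable case, but to restrict attention throughout to the true stages, where the $\Sigma^0_2$ guesses at the labels of $\mc{D}$ are correct and persistent. Suppose, toward a contradiction, that $\mc{A}$ is isomorphic to $\mc{D}$ via an isomorphism $f$, yet there are only finitely many true expansionary stages; let $s^*$ be the last one. The first observation I would record is that renewing of labels happens only at expansionary stages, and that changes made at non-true stages are discarded upon returning to the true path. Hence along the true stages after $s^*$ no labels are renewed and the label $L$ is never newly applied. Consequently, for any fixed finite set of elements the labels committed to them along the true path have stabilized, and agree with the final label-sets $X_\ell$ at every sufficiently late true stage.

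Next I would fix the finite family of elements that the expansionary test should locate, exactly as in Lemma~\ref{lem:inf-exp}: let $\mu_0,\ldots,\mu_r$ be the first $\extent(s^*)$ nodes of $\mc{A}$ together with the $f$-preimages of the first $\extent(s^*)$ nodes of $\mc{D}$, and let $\bar{a}_0 \in T_{\mu_0},\ldots,\bar{a}_r \in T_{\mu_r}$ include the first $\extent(s^*)$ elements of each of these components together with the $f$-preimages of the first $\extent(s^*)$ elements of the corresponding components of $\mc{D}$. Because $f$ is an isomorphism of the final structures, and because both the labels (by Lemma~\ref{lem:s2-def}) and the order $\preceq$ (by Lemma~\ref{lem:mark-def}) are definable by formulas that are uniform and independent of the structure, the tuple $\mu_0,\ldots,\mu_r;\bar{a}_0,\ldots,\bar{a}_r$ and its image under $f$ have the same atomic types and carry exactly the same final labels; in particular $x \in X_\ell$ precisely when $f(x) \in Y_\ell$, for each of the finitely many elements $x$ in question and each label $\ell$. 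Note that the expansionary test only inspects atomic types of nodes and $T$-elements together with labels, and never refers to $\preceq$ or the orders $W_\nu$, so no agreement on the linear orders is needed here.

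I would then use that there are infinitely many true stages and pick one, $s > s^*$, late enough that: (a) all atomic facts among the chosen elements and their $f$-images have appeared in the relevant approximations to $\mc{A}$ and $\mc{D}$ (these facts are monotone, since the atomic diagram of $\mc{B}$ only grows); (b) every final label of each chosen element of $\mc{A}$ has been committed along the true path, using $X_\ell = \bigcup_i X_\ell^{s_i}$, with no further labels added after $s^*$ by the first paragraph; and (c) every label of each $f$-image in $\mc{D}$ has appeared in the guess, using $Y_\ell = \bigcup_i Y_\ell^{s_i}$ along the true path. At such a true stage the atomic types agree and the committed labels on both sides coincide with the matching final labels, so $\mu_0,\ldots,\mu_r;\bar{a}_0,\ldots,\bar{a}_r$ and their $f$-images witness that stage $s$ passes the expansionary test. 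Hence $s$ is a true expansionary stage with $s > s^*$, contradicting the choice of $s^*$.

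The main obstacle is item~(c) together with the \emph{exactness} of the two-sided label match. In the 1-decidable case labels were c.e., so once seen they persisted; here the labels of $\mc{D}$ are only $\Sigma^0_2$, and a careless stage might see a spurious label that later vanishes, or miss a genuine one. Confining the argument to true stages is exactly what repairs this: along the true path the guessed sets $Y_\ell^{s}$ increase (by Lemma~\ref{lem:leq1} and the monotonicity of the $G_s$ along $\leq_1$) and are contained in the true $Y_\ell$, so each genuine label of the finitely many relevant $\mc{D}$-elements appears by some true stage and never retreats, while no spurious label is ever present. Together with the stabilization of the labels of $\mc{A}$ after $s^*$, this yields the exact match of labels required by the expansionary test.
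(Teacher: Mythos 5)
Your proof is correct and takes essentially the same route as the paper, which states that the proof of this lemma is the same as that of Lemma \ref{lem:inf-exp} with attention restricted to true stages---exactly your strategy of fixing the finite witness set via the isomorphism $f$ and waiting for a late enough true stage. Your closing paragraph, observing that along the true path the guesses $Y^s_\ell$ are monotone, contained in $Y_\ell$, and exhaust $Y_\ell$, is precisely the point the paper leaves implicit in saying ``we talk only about true stages.''
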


\begin{lemma}\label{lem:label2}
If there are infinitely many true expansionary stages, then every node $\rho$ or $\sigma_i^{\Box}$ and their $T$-elements have exactly the same labels. Each $\tau_{i,s}^{\Box}$ is labeled by $L$.
\end{lemma}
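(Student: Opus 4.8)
The plan is to mirror the proof of Lemma~\ref{lem:label}, tracking how labels accumulate in the bag, but now taking care that only labels assigned at true stages survive in $\mc{A}$. Recall from the verification that an element $x$ is labeled $\ell$ in $\mc{A}$ if and only if it was labeled $\ell$ at some true stage, and that labels are monotone along the true path: if $s \leq_1 t$ with $s$, $t$ true, then every label holding at $s$ still holds at $t$. Labels are renewed only at expansionary stages, so, using the hypothesis that there are infinitely many true expansionary stages, the renewing step is applied cofinally often along the true path. The one real difference from the $1$-decidable case is precisely this bookkeeping of true versus non-true stages, and getting it right is the main thing to check; everything else is the purely combinatorial accounting of the bag.

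First I would show that every label ever carried by $\rho$, by some $\sigma_i^{\Box}$, or by one of their $T$-elements eventually enters the bag. Such a label is assigned to the element as a secondary label at some true expansionary stage, is promoted to its primary label at the next true expansionary stage, and is added to the bag at the following one; the initial labels chosen at stage $0$ enter the bag equally quickly. Since there are infinitely many true expansionary stages, each such label is added to the bag at some true expansionary stage. Now in the renewing step, each of $\rho$, the $\sigma_i^{\Box}$, and their $T$-elements is labeled with every label currently in the bag, so once a label lies in the bag it is carried by all of these elements at that and every later true expansionary stage. At any true stage the labels of one of these elements consist of the current bag together with its current primary and secondary labels; taking the union along the true stages, and using that the latter two eventually enter the bag, each of these elements carries exactly the set of labels that are eventually in the bag. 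In particular they all carry the same set of labels, which is the first assertion.

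For the second assertion I would note that a node $\tau_{i,s}^{\Box}$ survives in $\mc{A}$ only if it was created at a true stage, since nodes created at stages that are later seen not to be true are returned to the reserve; and when such a node is created it is labeled exactly as $\rho$ is at that stage. At the next true expansionary stage---which exists because there are infinitely many---the renewing step applies the label $L$ to it, and by monotonicity of labels along the true path this label persists. Hence every $\tau_{i,s}^{\Box}$ occurring in $\mc{A}$ is labeled by $L$, as required. The main obstacle throughout is to justify that the argument may be run along the true path in place of the full sequence of stages, i.e., that label changes made at non-true stages are irrelevant because they are discarded; this is exactly what the monotonicity along $\leq_1$ and the cofinality of the true expansionary stages provide.
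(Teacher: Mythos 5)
Your proof is correct and takes essentially the same approach as the paper: the paper omits the proof of this lemma, saying only that it is the argument of Lemma~\ref{lem:label} (itself proved by inspecting the label-renewal step) ``except that we talk only about true stages,'' and your bag-accounting together with the monotonicity of labels along $\leq_1$ and the discarding of changes made at non-true stages is exactly that argument made explicit. Your observation that the $\tau_{i,s}^{\Box}$ surviving in $\mc{A}$ are precisely those designated at true stages, which then receive $L$ at the next true expansionary stage, is the intended reading of the construction.
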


\begin{lemma}
Let $s$ be a true expansionary stage and suppose that $a \in \mc{A}$ and $d \in \mc{D}$ are nodes which are among the first $\extent(s)$ nodes of $\mc{A}$ and $\mc{D}$ respectively (or $T$-elements which are among the first $\extent(s)$ elements of their components, and associated to nodes which are among the first $\extent(s)$ nodes), and so that $a$ has the same labels in $\mc{A}[s-1]$ as $d$ does in $\mc{D}[s]$.
Then for any expansionary stage $s^* \geq s$, either $a$ and $d$ have the same labels in $\mc{A}[s^* - 1]$ and $\mc{D}[s]$ respectively, or one of them is labeled $L$.
\end{lemma}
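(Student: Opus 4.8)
The plan is to imitate the proof of the corresponding lemma in the $1$-decidable case essentially verbatim, replacing ``expansionary stage'' by ``true expansionary stage'' throughout and replacing every appeal to the monotonicity of c.e.\ labels by the behaviour of the $\Sigma^0_2$ labels along the true path from Section~\ref{sec:guesses}. By Lemma~\ref{lem:inf-exp2} there are infinitely many true expansionary stages whenever $\mc{A} \cong \mc{D}$, so it suffices to prove the one-step version and iterate: if $s^* \geq s$ is a true expansionary stage at which $a$ and $d$ have the same labels, and $s^{**} > s^*$ is the next true expansionary stage, then either $a$ and $d$ still have the same labels at $s^{**}$, or one of them is labeled $L$. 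Once one of $a$, $d$ is labeled $L$ it remains so, so we may stop there.

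Next I would track the label of $a$ through the renewal at $s^*$, exactly as before. Let $\ell_{k_1}$ and $\ell_{k_2}$ be the primary and secondary labels of $a$ just before $s^*$; by the matching hypothesis $d$ also carries $\ell_{k_1}$ and $\ell_{k_2}$ at $s^*$. During renewing at $s^*$, $\ell_{k_2}$ becomes the new primary label of $a$ and a brand-new unique secondary label is assigned. The statement ``we add no labels to $\mc{A}$'' of the $1$-decidable proof is replaced here by the observation that the true stages strictly between $s^*$ and $s^{**}$ are not expansionary, so no renewing takes place at them, and since $s^* \leq_1 s^{**}$ (whence $s^* \leq_1 t$ for every true $t$ with $s^* \le t \le s^{**}$, by Lemma~\ref{lem:leq1}) no label placed on $\mc{A}$ at or before $s^*$ is retracted before $s^{**}$. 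Hence in $\mc{A}[s^{**}-1]$ the only elements still carrying $\ell_{k_2}$ are those labeled exactly like $a$, or else labeled $L$.

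I would then close the loop with the expansionary condition at $s^{**}$ and the growth of the guessed labels of $\mc{D}$ along the true path. Since $d$ carries $\ell_{k_2}$ at $s^*$ and $Y_\ell = \bigcup_i Y^{s_i}_\ell$ is increasing along true stages, $d$ still carries $\ell_{k_2}$ at $s^{**}$. Because $s^{**}$ is expansionary and $d$ lies among the first $\extent(s) < \extent(s^{**})$ nodes of $\mc{D}$ (or among the first $\extent(s)$ elements of such a component), the expansionary condition furnishes some $a' \in \mc{A}[s^{**}-1]$ carrying exactly the labels of $d$; in particular $a'$ carries $\ell_{k_2}$, so by the previous paragraph $a'$ is labeled like $a$ or is labeled $L$. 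This yields the required dichotomy for $a$ and $d$, completing the induction.

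The main obstacle, and the only genuinely new feature over the $1$-decidable case, is the $\Sigma^0_2$ bookkeeping: off the true path both the labels we impose on $\mc{A}$ and the guessed labels of $\mc{D}$ can be injured and withdrawn, so the monotonicity used above is legitimate only because $s^*$ and $s^{**}$ are true stages with $s^* \leq_1 s^{**}$. Every step that in the c.e.\ setting relied on ``labels only grow'' must instead be routed through the true-stage machinery of Section~\ref{sec:guesses}, which is precisely what guarantees that the labels visible at true stages are genuine labels of the final structures.
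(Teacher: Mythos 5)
Your proposal is correct and follows essentially the same route as the paper: the paper's proof of this lemma is precisely the Section~\ref{sec:1} argument (one-step reduction between consecutive expansionary stages, tracking the primary/secondary labels $\ell_{k_1},\ell_{k_2}$ through renewal at $s^*$, then using the expansionary condition at $s^{**}$ to produce an element $a'$ of $\mc{A}[s^{**}-1]$ carrying the labels of $d$) transferred verbatim to true expansionary stages, with the $\leq_1$-machinery of Section~\ref{sec:guesses} supplying the label monotonicity on both sides, which is exactly what you wrote. One minor remark: the appeal to Lemma~\ref{lem:inf-exp2} is superfluous, since the iteration only uses the finitely many true expansionary stages between $s$ and $s^*$ and needs no hypothesis that $\mc{A} \cong \mc{D}$.
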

%

\begin{lemma}\label{lem:bec-iso2}
Suppose that $\mc{A}$ and $\mc{D}$ are in fact isomorphic.
Let $s$ be a true expansionary stage, and let $\nu$ and $\mu$ be nodes of $\mc{A}$ and $\mc{D}$ respectively, which are among the first $\extent(s)$ nodes of those structures, and assume that neither are ever labeled $L$ at a true stage.
If, at stage $s$, $\nu$ and $\mu$ are labeled in the same way in $\mc{A}[s-1]$ and $\mc{D}[s]$ respectively, then $T_\nu \subseteq \mc{A}$ and $T_\mu \subseteq \mc{D}$ are isomorphic.
\end{lemma}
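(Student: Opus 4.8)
The plan is to transcribe the proof of Lemma~\ref{lem:bec-iso} essentially verbatim, with the single structural change that every reference to an ``expansionary stage'' becomes a reference to a \emph{true} expansionary stage, so that the labels and the relation $\preceq$ behave monotonically. First I would list the true expansionary stages after $s$ as $s = s_0 < s_1 < s_2 < \cdots$; since $\mc{A} \cong \mc{D}$, Lemma~\ref{lem:inf-exp2} guarantees there are infinitely many of them. Because neither $\nu$ nor $\mu$ is ever labeled $L$ at a true stage, the previous lemma applies at each $s_i$ and shows that $\nu$ and $\mu$ remain labeled the same way in $\mc{A}[s_i - 1]$ and $\mc{D}[s_i]$.

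Next I would define, for each $i$, a finite partial map $f_i \colon T_\nu \to T_\mu$ exactly as in the earlier proof: a $T$-element $a$ of $\nu$ which is among the first $\extent(s_i)$ elements of $T_\nu$ is placed in the domain of $f_i$, with value $f_i(a) = d$, provided there is a $T$-element $d$ of $\mu$, among the first $\extent(s_i)$ elements of $T_\mu$, carrying the same labels as $a$ at stage $s_i$. Since no two elements of a single component ever share the same labels, this $d$ is unique, so $f_i$ is well defined. I would then verify the four claims from the model proof, in the same order. Coherence ($f_i \subseteq f_{i'}$ for $i < i'$) follows from the previous lemma together with the observation that $a$ and $d$ cannot pick up the label $L$ at true stages (as $\nu, \mu$ never do, and an $L$ on $d$ would prevent $s_{i'}$ from being expansionary). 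Injectivity is immediate, since two elements with a common image would share labels within the same component. Totality and surjectivity use the expansionary condition at $s_{i+1}$ to produce, for a given $a \in T_\nu$ (resp.\ $d \in T_\mu$), a matching node whose tag is identical to $T_\nu$ as a copy of $\mc{C}_n$ or $\mc{C}_\infty$, and hence a corresponding element carrying the labels needed to lie in the range (resp.\ domain) of some $f_{i+1}$, exactly as before. Finally, each $f_i$ is a partial isomorphism because the expansionary condition forces the matched tuples to have the same atomic type, so $f = \bigcup_i f_i$ is the desired isomorphism.

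The only genuinely new point, and the one requiring care, is that in the $\Sigma^0_2$ construction the labels and the order $\preceq$ are stable only along the true path: at a non-true stage the construction rolls back, discarding label changes and returning the $\tau$-nodes to the reserve. I would therefore rely throughout on the facts, established in Section~\ref{sec:guesses} and the verification so far, that $X^s_\ell \subseteq X^t_\ell$ and $\preceq_s\, \subseteq\, \preceq_t$ whenever $s \leq_1 t$, and in particular along the true path $s_0 < s_1 < \cdots$. This monotonicity is precisely what lets the previous lemma and the coherence argument go through unchanged; once it is invoked, the remainder of the proof is the routine transcription of Lemma~\ref{lem:bec-iso} described above.
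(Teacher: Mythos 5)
Your proposal is correct and matches the paper's own treatment: the paper gives no separate proof of Lemma~\ref{lem:bec-iso2}, stating instead that the argument is "almost exactly the same" as that of Lemma~\ref{lem:bec-iso} with all stages replaced by true stages, which is precisely the transcription you carry out. Your added observation---that monotonicity of the labels and of $\preceq$ along $\leq_1$-comparable stages is what licenses this relativization---is exactly the point the paper relies on implicitly, so nothing is missing.
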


\begin{lemma}\label{lem:succ-dia2}
If $n \notin S$, then $\mc{A}$ is not isomorphic to $\mc{D}$.
\end{lemma}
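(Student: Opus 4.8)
The plan is to follow the proof of Lemma~\ref{lem:succ-dia} almost verbatim, replacing every reference to an expansionary stage by a reference to a \emph{true expansionary stage}, so that the labels and the relation $\preceq$ that we reason about are the genuine ones holding in $\mc{A}$ and $\mc{D}$. First I would assume for contradiction that $f \colon \mc{A} \to \mc{D}$ is an isomorphism; by Lemma~\ref{lem:inf-exp2} there are then infinitely many true expansionary stages. Since $n \notin S$, the tags $\mc{C}_n$ and $\mc{C}_\infty$ are non-isomorphic, so no $\sigma_i^{\Box}$ is tagged with $\mc{C}_n$, and by Lemma~\ref{lem:label2} every $\tau_{i,s}^{\Box}$ is eventually labeled $L$ along the true stages. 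Hence $\rho$ is the unique node of $\mc{A}$ which is both tagged $\mc{C}_n$ and never labeled $L$ at a true stage.

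Next I would fix an enumeration $d_0, d_1, \ldots$ of $\mc{D}$, set $d_i = f(\rho)$, and choose a true stage $t$ large enough that $\rho$ and $d_i$ lie among the first $\extent(t)$ nodes and that every $d_j$ with $j < i$ which is (under $f$) a $\tau$-node or one of its $T$-elements labeled $L$ has already received $L$ by stage $t$. The heart of the argument is to show that $\target(s) = d_i$ for every true expansionary stage $s > t$. Supposing otherwise, there is such an $s$ with a node $d_j \neq d_i$ among the first $\extent(s)$ nodes sharing $\rho$'s labels; splitting into the two cases exactly as in Lemma~\ref{lem:succ-dia}---either $d_i$ and $\rho$ carry different labels at stage $s$, or some $d_j$ with $j < i$ carries the same labels as $d_i$---and applying Lemma~\ref{lem:bec-iso2} in place of Lemma~\ref{lem:bec-iso}, I would derive that some node $\nu \neq \rho$ not labeled $L$ has $T_\nu \cong \mc{C}_n$, contradicting the uniqueness established above.

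Finally, with $\target(s)$ stabilized to $d_i$ along the true stages, I would read off the direction from the order type of $W_{f(\rho)}$. Here the one genuinely new ingredient over the $1$-decidable case is that the test ``$x$ is the greatest element of $W_{\target(s)}$'' is the $\forall\exists$ statement $\forall y\,(x \npreceq y)$---using that $\preceq$ is $\exists\forall$-definable by Lemma~\ref{lem:mark-def}---which we answer with the $2$-diagram of $\mc{D}$. Thus if $W_{f(\rho)} \cong \omega^*$ its greatest element is eventually recognized and $\direction(s)$ is eventually $\RIGHT$ at true stages, forcing $W_\rho \cong \omega$; while if $W_{f(\rho)} \cong \omega$ or $\omega^* + \omega$ no element is ever recognized as greatest and $\direction(s)$ stays $\LEFT$, forcing $W_\rho \cong \omega^*$. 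In every case $W_\rho \not\cong W_{f(\rho)}$, the desired contradiction.

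The step I expect to be most delicate is the bookkeeping that keeps the whole argument honest along the true path: because the guessing of the $\Sigma^0_2$ labels of $\mc{D}$ is injured at non-true stages, I must ensure that $\rho$'s labels, the facts ``$d_j$ carries the same labels as $\rho$'', and the order facts about $W_{\target(s)}$ that drive the target and direction updates are all evaluated at true (expansionary) stages, where Lemmas~\ref{lem:label2} and~\ref{lem:bec-iso2} apply and where $\preceq$ and the labels agree with their final values in $\mc{A}$ and $\mc{D}$. Everything else transfers mechanically from Lemma~\ref{lem:succ-dia}.
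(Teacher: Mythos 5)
Your proposal is correct and matches the paper's intended argument: the paper itself omits the proof, stating only that it is "almost exactly the same" as the proof of Lemma~\ref{lem:succ-dia} with all references to expansionary stages replaced by true expansionary stages, which is precisely what you do, including the one genuinely new point that the greatest-element test for $W_{\target(s)}$ is a $\forall\exists$ query answered by the $2$-diagram of $\mc{D}$ via Lemma~\ref{lem:mark-def}.
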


\begin{lemma}\label{lem-cons-unif2}
If $n \in S$, then $\mc{A}$ has a decidable presentation which we can construct uniformly.
\end{lemma}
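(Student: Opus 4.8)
The plan is to imitate the proof of Lemma~\ref{lem-cons-unif}, replacing the single labeling Lemma~\ref{lem:lab-1dec} by the two Lemmas~\ref{lem:s2-dec} and~\ref{lem:mark-dec} and working throughout along the true path. Since $n \in S$ we have $\mc{C}_n \cong \mc{C}_\infty$. Let $\mc{B}^-$ be the result of running the construction exactly as before except that the node $\rho$ and its component are never built, and set $\mc{A}^- = [M(\mc{B}^-,\preceq)]^X$, where $\preceq$ and $X$ are the relation and label sequence produced along the true stages, restricted to $\mc{B}^-$. Because $\mc{B}^-$ is built uniformly, it suffices to show that $\mc{A}^-$ is isomorphic to $\mc{A}$ and is decidable.

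For the isomorphism, note that the constructions $M(-,\preceq)$ and $[-]^X$ are determined up to isomorphism by their inputs, so it is enough to show $(\mc{B}^-,\preceq,X) \cong (\mc{B},\preceq,X)$ as linearly ordered, labeled structures. Since $\mc{B}$ has no relations between distinct components and $\preceq$ relates only elements within a single $W_\nu$, this structure is a disjoint union of its $\nu$-components together with their labels, so its isomorphism type is unchanged if we delete one component whose labeled, ordered isomorphism type occurs infinitely often. It remains to identify such a component matching $\rho$. If there are infinitely many true expansionary stages then by Lemma~\ref{lem:label2} the node $\rho$, each $\sigma_i^{\Box}$, and all their $T$-elements carry the same labels; as $\mc{C}_n \cong \mc{C}_\infty$ and $(W_\rho,\preceq)$ is isomorphic to one of $\omega,\omega^*,\omega^*+\omega$, the labeled $\rho$-component matches the labeled $\sigma_i^{\Box}$-component for the appropriate $\Box \in \{\leftmapsto,\mapsto,\leftrightarrow\}$, of which there are infinitely many. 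If there are only finitely many true expansionary stages, let $s^*$ be the last; thereafter no new labels are applied and $\direction(s)$ is eventually constant, so $(W_\rho,\preceq)$ is $\omega$ or $\omega^*$ and the labeled $\rho$-component matches each $\tau_{i,s^*}^{\Box}$-component for the corresponding $\Box \in \{\leftmapsto,\mapsto\}$, again infinitely many.

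For decidability, observe that $\rho$ was the only node whose order $W_\nu$ was built incrementally as $\direction(s)$ changed; every node of $\mc{B}^-$ has its $T_\nu$ and $(W_\nu,\preceq)$ fixed as soon as it is created, so $(\mc{B}^-,\preceq)$ is a computable structure on which $\preceq$ is computable. By Lemmas~\ref{lem:s2-dec} and~\ref{lem:mark-dec} it therefore suffices to show that $(\mc{B}^-,\preceq)$ is decidable, for then $M(\mc{B}^-,\preceq)$ is decidable and hence so is $\mc{A}^- = [M(\mc{B}^-,\preceq)]^X$. Since $n \in S$, every tag is isomorphic to $\mc{C}_\infty$ and every $(W_\nu,\preceq)$ is $\omega$ or $\omega^*$, so there are only finitely many isomorphism types of $\nu$-component; each is decidable by Lemma~\ref{lem:disj}, being $\Delta_0$-bi-interpretable with the disjoint union of the decidable structures $\mc{C}_\infty$ and the relevant linear order. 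By Lemma~\ref{lem:disj-inf} the disjoint union of all components of one type is decidable, and by Lemma~\ref{lem:disj-mixed} the disjoint union of the finitely many types is decidable; this last structure is $\Delta_0$-bi-interpretable with $(\mc{B}^-,\preceq)$, which is thus decidable.

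The step demanding the most care is the reduction through Lemmas~\ref{lem:s2-dec} and~\ref{lem:mark-dec}: one must verify that $X$ restricted to $\mc{B}^-$ is presented as a uniform sequence of $\Sigma^0_2$ indices and that $\preceq$ on $\mc{B}^-$ is genuinely computable rather than merely $\Sigma^0_2$, both of which hold exactly because deleting $\rho$ removes the only component whose order depended on $\direction(s)$ and hence on the true path. Everything else is the combinatorial bookkeeping already carried out in the $1$-decidable case.
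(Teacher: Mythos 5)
Your proof is correct and is essentially the argument the paper intends: the paper omits this proof, stating that it is the proof of Lemma~\ref{lem-cons-unif} redone along the true stages, and that is exactly what you do---delete $\rho$, match its component to the $\sigma_i^{\Box}$-components (infinitely many true expansionary stages, via Lemma~\ref{lem:label2}) or to the $\tau_{i,s^*}^{\Box}$-components (finitely many), observe that $\preceq$ and the labels restricted to $\mc{B}^-$ are respectively computable and uniformly $\Sigma^0_2$, and then assemble decidability from Lemmas~\ref{lem:disj}, \ref{lem:disj-mixed}, \ref{lem:disj-inf}, \ref{lem:mark-dec} and~\ref{lem:s2-dec}. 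The only slip is harmless: the $\sigma_i^{\leftrightarrow}$-components have $(W_\nu,\preceq)\cong\omega^*+\omega$, so not every order in $\mc{B}^-$ is $\omega$ or $\omega^*$, but your grouping argument goes through verbatim with this third (still finite) isomorphism type included.
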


These last two lemmas complete the proof.

\subsection{\texorpdfstring{The $n \geq 2$ case of Theorem \ref{thm:n-pres}}{The n at least 2 case of Theorem \ref{thm:n-pres}}}\label{sec:sec-part}

Note that we built $\mc{A}$ while diagonalizing against a 2-decidable structure $\mc{D}$. So in fact we have shown that
\[ (\Sigma^1_1,\Pi_1^1) \leq_1 (I_{d-pres},\overline{I_{2-pres}}).\]
That is, for any $\Sigma^1_1$ set $S$, there is a computable function $f$ such that
\[n \in S \Longrightarrow \text{the $f(n)$th computable structure has a decidable presentation}\]
and
\[n \notin S \Longrightarrow \text{the $f(n)$th computable structure has no 2-decidable presentation}.\]
This proves the $n \geq 2$ case of Theorem \ref{thm:n-pres}.

\begin{question}\label{ques:red}
Is it true that $(\Sigma^1_1,\Pi_1^1) \leq_1 (I_{d-pres},\overline{I_{1-pres}})$?
\end{question}

\bibliography{References}
\bibliographystyle{alpha}

\end{document}